\DeclareMathAlphabet\mathbfcal{OMS}{cmsy}{b}{n}
\newtheorem{theorem}{Theorem}[section]
\newtheorem{lemma}[theorem]{Lemma}
\theoremstyle{definition}
\newcommand\makebig[2]{%
  \@xp\newcommand\@xp*\csname#1\endcsname{\bBigg@{#2}}%
  \@xp\newcommand\@xp*\csname#1l\endcsname{\@xp\mathopen\csname#1\endcsname}%
  \@xp\newcommand\@xp*\csname#1r\endcsname{\@xp\mathclose\csname#1\endcsname}%
}
\newcommand{\dif}{\mathrm{d}}
\newcommand{\im}{\mathrm{i}}
\begin{document}

\title{Efficient Computation of the Magnetic Polarizability Tensor Spectral Signature using POD}

\author{B.A. Wilson and P.D. Ledger\\
Zienkiewicz Centre for Computational Engineering, 
College of Engineering, \\Swansea University \\
 b.a.wilson@swansea.ac.uk, p.d.ledger@swansea.ac.uk}

\maketitle

\pagenumbering{arabic}

\section*{Abstract}
Our interest lies in the identification of hidden conducting permeable objects from measurements of the perturbed magnetic field in metal detection taken over range of low frequencies. The magnetic polarizability tensor (MPT) provides a characterisation of a conducting permeable object using a small number of coefficients, has explicit formula for their calculation and a well understood frequency behaviour, which we call its spectral signature. However, to compute such signatures, and build a library of them for object classification, requires repeated solution of a direct (full order) problem, which is typically accomplished using a finite element discretisation. To overcome this issue, we propose an efficient reduced order model (ROM) using a proper orthogonal decomposition (POD) for the rapid computation of MPT spectral signatures. Our ROM benefits from output certificates, which give bounds on the accuracy of the predicted outputs with respect to the full order model solutions. To further increase the efficiency of the computation of the MPT spectral signature, we provide scaling results, which enable an immediate calculation of the signature under changes in the object size or conductivity. We illustrate our approach by application to a range of homogenous and inhomogeneous conducting permeable objects.

{\bf Keywords} Metal detection; Magnetic polarizability tensor; Reduced order model; Object classification.

{\bf MSC CLASSIFICATION} 65N30; 35R30; 35B30

\section{Introduction}

There is considerable interest in using the magnetic polarizability tensor (MPT) characterisation of conducting permeable objects to classify and identify hidden targets in metal detection. The MPT is a complex symmetric rank 2 tensor, which has $6$ independent coefficients, although the number of independent coefficients for objects with rotational or reflectional symmetries is smaller~\cite{LedgerLionheart2015}. Its coefficients are a function of the exciting frequency, the object's size, its shape  as well as its conductivity and permeability.
Explicit formulae for computing the tensor coefficients have been derived~\cite{Ammari2014,LedgerLionheart2015,LedgerLionheart2018,LedgerLionheart2019} and validated against exact solutions and measurements~\cite{LedgerLionheart2016,LedgerLionheart2018}. Also, the way in which the tensor coefficients vary with the exciting frequency is theoretically well understood~\cite{LedgerLionheart2019} offering improved object classification.

The frequency (or spectral) behaviour of the MPT, henceforth called its spectral signature, has been exploited in a range of different classification algorithms including simple library classification for homogeneous~\cite{Ammari2015} and inhomogeneous objects~\cite{LedgerLionheartamad2019}, a $k$ nearest neighbours (KNN) classification algorithm~\cite{Makkonen2014} and machine learning approaches~\cite{WoutervanVerre2019}.  The MPT classification of objects has already been applied to a range of different applications including  airport security screening~\cite{marsh2014,Makkonen2014}, waste sorting~\cite{karimian2017} and anti-personal landmine detection~\cite{rehim2016}. 
The aforementioned {\em supervised} classification techniques rely on a library of MPT spectral signatures to {\em learn} how to classify the objects. The purpose of this paper is to describe an efficient computational tool for computing this library.

One approach to obtaining a library of spectral signatures is to use a metal detector or dedicated measurement device
to obtain MPT coefficients of different objects~\cite{zhao2016,zhao2014,rehim2015}, however, to do so, over a range of frequencies for a large number of objects, is time consuming and will result in unavoidable measurement errors and noise. Therefore, there is considerable interest in their automated computation. By post processing finite element method (FEM) solutions to eddy current problems using  commercial packages (e.g  with ANSYS as in~\cite{rehim2015}) MPT coefficients can be obtained, however, improved accuracy, and a better understanding, can be gained by using the available explicit expressions for MPT coefficients, which rely on computing finite element (FE) approximations to a transmission problem~\cite{LedgerLionheart2015,LedgerLionheart2018,LedgerLionheart2019}. Nevertheless, to produce an accurate MPT spectral signature,  the process must be repeated for a large number of excitation frequencies leading to potentially expensive computations for fine discretisations (with small mesh spacing and high order elements).  The present paper addresses this issue by proposing a reduced order model, in the form of a (projected) proper orthogonal decomposition (POD) scheme, that relies on full order model solutions computed using the  established open source FE package, \texttt{NGSolve}, and the recently derived alternative explicit expressions formulae for the MPT coefficients~\cite{LedgerLionheart2019}. The use of \texttt{NGSolve}~\cite{NGSolve,netgendet} ensures that the solutions to underlying (eddy current type) transmission problems are accurately computed using high order $\bm{H}(\text{curl})$ conforming  (high order edge element) discretisations (see~\cite{ledgerzaglmayr2010,SchoberlZaglmayr2005,zaglmayrphd} and references therein) and the POD technique ensures their rapid computation over sweeps of frequency.

Reduced order models (ROMs) based on POD have been successfully applied to efficiently generate solutions for new problem parameters using a small number full order model snapshots in a range of engineering applications including mechanics~\cite{niroomandi2010model,radermacher2016pod}, thermal problems \cite{wang2012comparative,bialecki2005proper}, fluid flow \cite{luo2011reduced,pettit2002application} as well as electromagnetic problems with application to integrated circuits \cite{kerler2017model}. However, they have not been applied to the computation of MPT spectral signatures. A review of current POD techniques is provided in~\cite{hesthaven2016,Chatterjee2000}.

The main novelty of the work is the application of a POD approach for the efficient and accurate computation of the MPT spectral signature and the derivation of output certificates that ensure accuracy of the reduced order predictions. This ROM approach is motivated by the previous success of POD approaches and the theoretical study~\cite{LedgerLionheart2019}, which shows the spectral behaviour of the MPT is characterised by a small number of functions and, hence, has a sparse representation. The practical computation requires only computing full order model solution snapshots at a small number of frequencies and the evaluation of the MPT spectral signature follows from solving a series of extremely small linear systems. A second novelty is the presentation of simple scaling results, which enable the MPT spectral signature to easily be computed from an existing set of coefficients under the scaling of an object's conductivity or object size. 

The paper is organised as follows: In Section~\ref{sect:eddycurrent}, the eddy current model, which applies in metal detection, and the asymptotic expansion of the perturbed magnetic field in the presence of a conducting permeable object, which leads to the explicit expression of the MPT, is briefly reviewed. Then, in Section~\ref
{sect:fullorder}, the FE model used for full order model problem is described. Section~\ref{ROM} presents the POD reduced order model scheme. This is followed, in Section~\ref{sect:scaling}, by the derivation of results that describe the scaling of the MPT under parameter changes. Sections~\ref{sect:examplespodp} and~\ref{sect:examplesscale} present numerical examples of the POD scheme for computing the frequency behaviour of the MPT and examples of the scaling of the MPT under parameter changes, respectively.

\section{The eddy-current model and asymptotic expansion}\label{sect:eddycurrent}

We briefly discuss the eddy-current model along with stating the asymptotic expansion that forms the basis of the magnetic polarizability description of conducting objects in metal detection.
\subsection{Eddy-current model}
The eddy current model is a low frequency approximation of the Maxwell system that neglects the displacement currents, which is valid when the frequency is small and the conductivity of the body is high. A rigorous justification of the model involves the topology of the conducting body~\cite{ammaribuffa2000}.  The eddy current model is described by the system

\begin{subequations}\label{Eddy Current}
\begin{align}
\nabla\times\bm{E}_{\alpha}&=\im \omega\mu\bm{H}_{\alpha},\\
\nabla\times\bm{H}_{\alpha}&=\bm{J}_0+\sigma\bm{E}_{\alpha}.
\end{align}
\end{subequations}
where $\bm{E}_{\alpha}$ and $\bm{H}_{\alpha}$ are the electric and magnetic interaction fields, respectively, $ \bm{J}_0$ is an external current source, $\im:=\sqrt{-1}$, $\omega$ is the angular frequency, $\mu$ is the magnetic permeability and $\sigma$ is the electric conductivity.  We will use the eddy current model for describing the forward and inverse problems  associated with metal detection.

\subsubsection{Forward problem} \label{sect:forward}
In the forward (or direct) problem, the position and materials of the conducting body $B_{\alpha}$ are known. The object has a high conductivity, $\sigma=\sigma_*$, and a permeability, $\mu=\mu_*$. For the purpose of this study, the conducting body is assumed to be buried in soil, which is assumed to be of a much lower conductivity so that $\sigma \approx 0$ and have a permeability $\mu=\mu_0:= 4 \pi \times 10^{-7}\text{H/m}$.  A background field is generated by a solenodial current source $\bm{J}_0$ with support in the air above the soil, which also has $\sigma=0$ and $\mu =\mu_0$. The region around the object is $B_{\alpha}^c\vcentcolon=\mathbb{R}^3\setminus B_{\alpha}$ as shown  in Figure \ref{metal detection}. Note that a similar model also applies in the situation of identifying hidden targets in security screening~\cite{marsh2014,Makkonen2014} and waste sorting ~\cite{karimian2017} amongst others.

\begin{figure}[H]
\begin{center}
\includegraphics[width=0.7\textwidth, keepaspectratio]{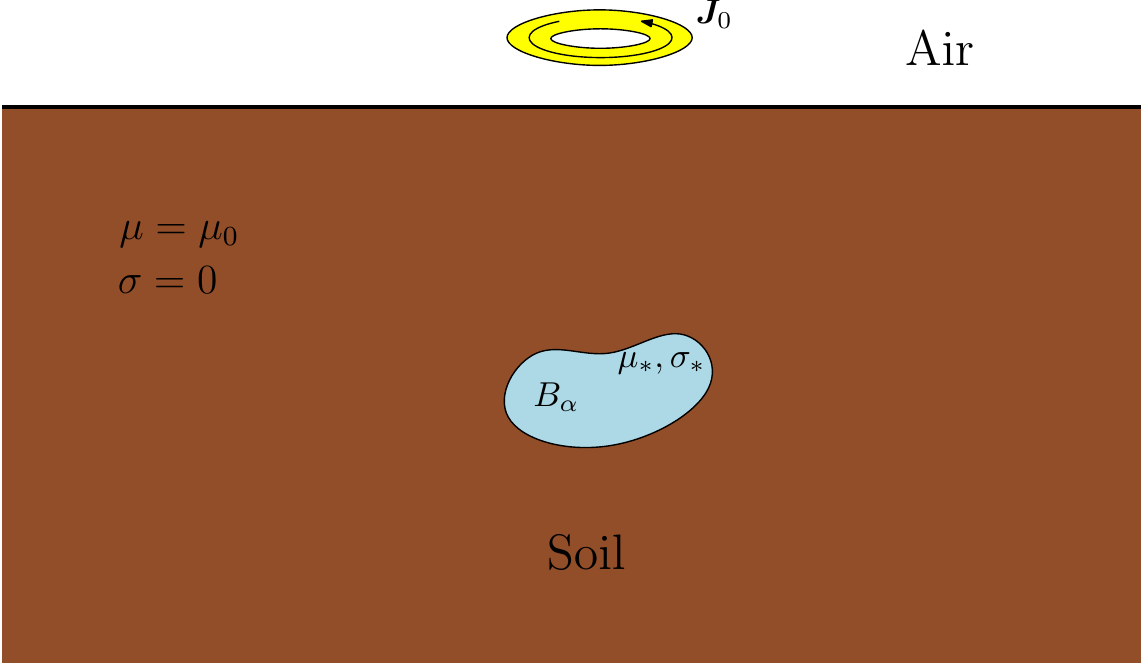}
\caption{A diagram showing a hidden conducting object $B_{\alpha}$, buried in soil, with a current source located in the air above.}
\label{metal detection}
\end{center}
\end{figure}
The forward model is described by the system (\ref{Eddy Current}),  which holds in ${\mathbb R}^3$, with
\begin{align}
\mu (\bm{x}) = \left \{  \begin{array}{ll} \mu_* & \bm{x} \in B_{\alpha} \\
\mu_0 & \bm{x} \in B_{\alpha}^c \end{array} \right . , \qquad 
\sigma (\bm{x}) = \left \{  \begin{array}{ll} \sigma_* & \bm{x} \in B_{\alpha} \\
0 & \bm{x} \in B_{\alpha}^c \end{array} \right .,
\end{align}
and the regions $B_\alpha$  and $B_\alpha^c $ are coupled by the transmission conditions
\begin{align}
\left [\bm{n} \times \bm{E}_\alpha\right ]_{\Gamma_{\alpha}}=\left [\bm{n} \times \bm{H}_\alpha\right ]_{\Gamma_{\alpha}}=\bm{0},\label{jump}
\end{align}
\noindent
which hold on  $\Gamma_\alpha:= \partial B_\alpha$. In the above, $[u ]_{\Gamma_{\alpha}}:= u| _+ - u|_-  $ denotes the jump,  the $+$ refers to just outside of $B_\alpha$ and the $-$ to just inside and $\bm{n}$ denotes a unit outward normal to  $\Gamma_{\alpha}$. 

The electric interaction field is non-physical in $B_\alpha^c$ and, to ensure uniqueness of this field, the condition $\nabla \cdot \bm{E}_\alpha =0$ is imposed in this region. Furthermore,  we also require that $\bm{E}_{\alpha}=O(1/|\bm{x}|)$ and $\bm{H}_{\alpha}=O(1/|\bm{x}|)$  as $|\bm{x} | \to \infty$, denoting that the fields go to zero at least as fast as $1/|\bm{x}|$, although, in practice, this rate can be faster.

\subsubsection{Inverse problem} \label{sect:inverseproblem}
In metal detection, the  inverse problem is to determine the location, shape and material properties ($\sigma_*$ and $\mu_*$) of the conducting object $B_\alpha$ from measurements of $(\bm{H}_\alpha - \bm{H}_0) (\bm{x})$ taken at a range of  locations $\bm{x}$ in the air. As described in the introduction, there are considerable advantages in using spectral data, i.e. additionally measuring $(\bm{H}_\alpha - \bm{H}_0) (\bm{x})$  over a range of frequencies $\omega$, within the limit of the eddy current model. Here, $\bm{H}_0$ denotes the background magnetic field and
$\bm{E}_0$ and $\bm{H}_0$ are the solutions of (\ref{Eddy Current}) with $\sigma =0$ and $\mu=\mu_0$ in ${\mathbb R}^3$. Similar to above, we also require the decay conditions $\bm{E}_{0}=O(1/|\bm{x}|)$ and $\bm{H}_{0}=O(1/|\bm{x}|)$  as $|\bm{x} | \to \infty$. Note that practical metal detectors measure a voltage perturbation, which corresponds to $\int_S \bm{n} \cdot (\bm{H}_\alpha - \bm{H}_0) (\bm{x}) \dif \bm{x}$ over an appropriate surface $S$~\cite{LedgerLionheart2018}. For very small coils, this voltage perturbation is approximated by $\bm{m} \cdot (\bm{H}_\alpha - \bm{H}_0) (\bm{x})$ where $\bm{m}$ is the magnetic dipole moment of the coil~\cite{LedgerLionheart2018}.

A traditional approach to the solution of this inverse problem involves creating a discrete set of voxels, each with unknown $\sigma$ and $\mu$, and posing the solution to the inverse problem as an optimisation process in which $\sigma $ and $\mu$ are found through minimisation of an appropriate functional e.g.~\cite{manuch2006}. From the resulting images of $\sigma$ and $\mu$ one then attempts to infer the shape and position of the object.
However,
this problem is highly ill-posed~\cite{brown2016} and presents
considerable challenges mathematically and computationally in the case of
limited noisy measurement data.

Instead, we seek an approximation of the perturbation $(\bm{H}_{\alpha}-\bm{H}_0)( \bm{x})$ at some point $\bm{x}$ exterior to $B_\alpha$, which allows objects to be characterised by a small number of coefficients in a MPT that are easily obtained from the measurements of $(\bm{H}_\alpha - \bm{H}_0) (\bm{x})$ once the object position is known, which can be found from a MUSIC algorithm for example~\cite{Ammari2014}. The object identification then reduces to a classification problem, as discussed in the introduction.

\subsection{The asymptotic expansion and MPT description}

Following~\cite{Ammari2014,LedgerLionheart2015} we define $B_\alpha := \alpha B + \bm{z}$ where $B$ is a unit size object, $\alpha$ is the object size and $\bm{z}$ is the object's translation from the origin as shown in Figure ~\ref{BAlphaShifted}.
\begin{figure}[H]
\begin{center}
\includegraphics[width=0.8\textwidth, keepaspectratio]{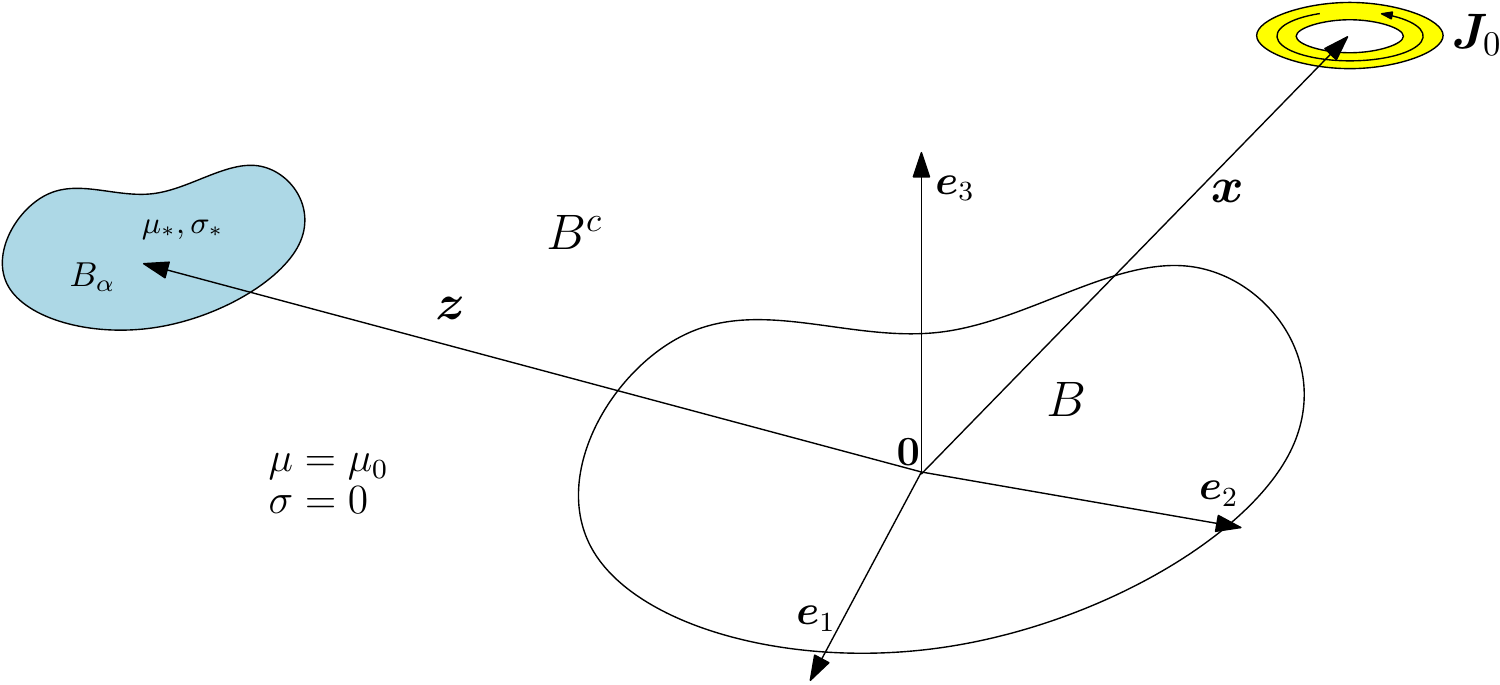}
\caption{A diagram showing the physical description of $B_{\alpha}$ with respect to the coordinate axes.}
\label{BAlphaShifted}
\end{center}
\end{figure}
\noindent Then, using the asymptotic formula obtained by Ammari, Chen, Chen, Garnier and Volkov~\cite{Ammari2014}, Ledger and Lionheart~ \cite{LedgerLionheart2015} have derived the simplified form
\begin{align}
(\bm{H}_{\alpha}-\bm{H}_0)(\bm{x})_i=(\bm{D}_{\bm{x}}^2G(\bm{x},\bm{z}))_{ij}(\mathcal{M})_{jk}(\bm{H}_0(\bm{z}))_k+O(\alpha^4), \label{eqn:asymp}
\end{align}
which holds as $\alpha\to 0$ and makes the MPT explicit. The relationship between the leading order term in the above  to the dipole expansion of $(\bm{H}_{\alpha}-\bm{H}_0)(\bm{x})$ is discussed in~\cite{LedgerLionheart2018}.
 In the above, $G(\bm{x} ,\bm{z}  ) := 1/ {4\pi | \bm{x} -\bm{z} |}$ is the free space Laplace Green's function, $ \bm{D}^2_x G$ denotes the Hessian of $G$ and Einstein summation convention of the indices is implied. In addition, ${\mathcal M}=({\mathcal M})_{jk} {\bm e}_j \otimes {\bm e}_k$, where $\bm{e}_i$ denotes the $i$th orthonormal unit vector,  is the symmetric rank 2 MPT, which describes the shape and material properties of the object $B_\alpha $ and is frequency dependent, but is independent of the object's position $\bm{z}$. We will sometimes write $\mathcal{M}[ \alpha B, \omega ]$ to emphasise this. The above formulation, and the definition of ${\mathcal M}$ below, are presented for the case of a single homogenous object $B$, the extension to multiple inhomogeneous objects can be found in~\cite{LedgerLionheartamad2019,LedgerLionheart2019}.

%

Using the derivation in~\cite{LedgerLionheart2019}, we state the explicit formulae for the computation of the coefficients of $\mathcal{M}$, which
are particularly well suited to a FEM discretisation. The earlier explicit expressions in~\cite{LedgerLionheart2015,LedgerLionheart2016,LedgerLionheart2018} are equivalent for exact fields.  We use the splitting $(\mathcal{M})_{ij}:=(\mathcal{N}^0)_{ij}+(\mathcal{R})_{ij}+\im(  \mathcal{I})_{ij}$ obtained in ~\cite{LedgerLionheart2019} with
\begin{subequations}
\label{eqn:NRI}
\begin{align}
(\mathcal{N}^0[ \alpha B] )_{ij}&:=\alpha^3\delta_{ij}\int_{B}(1-\mu_r^{-1})\dif \bm{\xi}+\frac{\alpha^3}{4}\int_{B\cup B^c}\tilde{\mu}_r^{-1}\nabla\times\tilde{\bm{\theta}}_i^{(0)}\cdot\nabla\times\tilde{\bm{\theta}}_j^{(0)}\ \dif \bm{\xi},\\
(\mathcal{R}[\alpha B, \omega])_{ij}&:=-\frac{\alpha^3}{4}\int_{B\cup B^c}\tilde{\mu}_r^{-1}\nabla\times\bm{\theta}_j^{(1)}\cdot\nabla\times\overline{\bm{\theta}_i^{(1)}}\ \dif \bm{\xi},\\
(\mathcal{I}[\alpha B, \omega])_{ij}&:=\frac{\alpha^3}{4}\int_B\nu\Big(\bm{\theta}_j^{(1)}+(\tilde{\bm{\theta}}_j^{(0)}+\bm{e}_j\times\bm{\xi})\Big)\cdot\Big(\overline{\bm{\theta}_i^{(1)}+(\tilde{\bm{\theta}}_i^{(0)}+\bm{e}_i\times\bm{\xi})}\Big)\ \dif \bm{\xi},
\end{align}
\end{subequations}
where $\mathcal{N}^0[ \alpha B]$, $\mathcal{R}[\alpha B, \omega]$ and $\mathcal{I}[\alpha B, \omega]$ are real symmetric rank 2 tensors, which each have real eigenvalues.
In the above,
\begin{align}
\tilde{\mu}_r ( \bm{\xi} ) := \left \{ \begin{array}{ll}  \mu_r :=\mu_*/\mu_0 & \bm{\xi} \in B\\
1 & \bm{\xi} \in B^c \end{array} \right . \nonumber,
\end{align}
and $\nu:=\alpha^2\omega\mu_0\sigma_*$, $\delta_{ij}$ is the Kronecker delta and the overbar denotes the complex conjugate. The computation of (\ref{eqn:NRI}) rely on the solution of the transmission problems~\cite{LedgerLionheart2019}
\begin{subequations}
\label{eqn:Theta0}
\begin{align}
\nabla\times\tilde{\mu}_r^{-1}\nabla\times\bm{\theta}_i^{(0)}&=\bm{0} &&\textrm{in }B\cup B^c,\\
\nabla\cdot\bm{\theta}_i^{(0)}&=0 &&\textrm{in }B\cup B^c,\\
[{\bm{n}}\times\bm{\theta}_i^{(0)}]_{\Gamma}&=\bm{0} &&\textrm{on }\Gamma,\\
[{\bm{n}}\times\tilde{\mu}_r^{-1}\nabla\times\bm{\theta}_i^{(0)}]_{\Gamma}&=\bm{0} &&\textrm{on }\Gamma,\\
\bm{\theta}_i^{(0)}-{\bm{e}}_i\times\bm{\xi}&=\bm{O}(|\bm{\xi}|^{-1}) &&\textrm{as }|\bm{\xi}|\rightarrow\infty,
\end{align}
\end{subequations}
where $\Gamma:=\partial B$ and 
\begin{subequations}
\label{eqn:Theta1}
\begin{align}
\nabla\times {\mu}_r^{-1}\nabla\times\bm{\theta}_i^{(1)}-\im \nu  (\bm{\theta}_i^{(0)}+\bm{\theta}_i^{(1)})&=\bm{0}&&\textrm{in }B,\\
\nabla\times \nabla\times\bm{\theta}_i^{(1)} &=\bm{0}&&\textrm{in }B^c,\\
\nabla\cdot\bm{\theta}_i^{(1)}&=0&&\textrm{in }B^c,\\
[{\bm{n}}\times\bm{\theta}_i^{(1)}]_{\Gamma}&=\bm{0}&&\textrm{on }\Gamma,\\
[{\bm{n}}\times\tilde{\mu}_r^{-1}\nabla\times\bm{\theta}_i^{(1)}]_{\Gamma}&=\bm{0}&&\textrm{on }\Gamma,\\
\bm{\theta}_i^{(1)}&=\bm{O}(|\bm{\xi}|^{-1})&&\textrm{as }|\bm{\xi}|\rightarrow\infty.
\end{align}
\end{subequations}
Note also that we choose to introduce $\tilde{\bm{\theta}}_i^{(0)}\vcentcolon=\bm{\theta}_i^{(0)}-{\bm{e}}_i\times\bm{\xi}$, which can be shown to satisfy the same transmission problem as (\ref{eqn:Theta0}) except with a non-zero jump condition for $[{\bm{n}}\times\tilde{\mu}_r^{-1}\nabla\times\tilde{\bm{\theta}}_i^{(0)}]_{\Gamma}$ and the decay condition $\tilde{\bm{\theta}}_i^{(0)}(\bm{\xi})=\bm{O}(|\bm{\xi}|^{-1})$ as $|\bm{\xi}|\rightarrow\infty$.

\section{Full order model}\label{sect:fullorder}
To approximate the solutions to the transmission problems (\ref{eqn:Theta0}) and (\ref{eqn:Theta1}) we truncate the unbounded domain $B^c$ at a finite distance from the object $B$ and create a bounded domain $\Omega$ containing $B$. On $\partial \Omega$, we approximate the decay conditions (\ref{eqn:Theta0}e) and (\ref{eqn:Theta1}f) by  $\bm{n} \times   \tilde{\bm{\theta}}_i^{(0)}= \bm{n} \times (  {\bm{\theta}}_i^{(0)} - \bm{e}_i \times \bm{\xi})=\bm{0}$ and  $\bm{n} \times {\bm{\theta}}_i^{(1)} =\bm{0}$, respectively. On this finite domain, we approximate the associated weak variational statements to these problems using  FEM with a $\bm{H}(\text{curl})$ conforming discretisation with mesh spacing $h$ and order elements $p$ where
\begin{equation}
\bm{H}(\text{curl}) :=\left \{ {\bm{u}} :{\bm{u}} \in  (L^2(\Omega))^3, \ \nabla \times {\bm{u}} \in  (L^2(\Omega))^3 \right \},
\end{equation}
and $L^2(\Omega)$ denotes the standard space of square integrable functions.
In Section~\ref{sect:weak} we provide their weak formulations and provide their discretisation in Section~\ref{sect:fem}. Henceforth, we call this discrete approximation the full order model.
\subsection{Weak formulation of the problem} \label{sect:weak}

Following the approach advocated in~\cite{ledgerzaglmayr2010} for magnetostatic and eddy current problems, we add a regularisation term $\varepsilon \int_{\Omega} \tilde{\bm{\theta}}_i^{(0)} \cdot \bm{\psi} \dif \bm{\xi}$, where $\varepsilon$ is a small regularisation parameter,  to the weak variational statement of (\ref{eqn:Theta0}), written in terms of $\tilde{\bm{\theta}}_i^{(0)}$, in order to circumvent the Coulomb gauge $\nabla\cdot\tilde{\bm{\theta}}_i^{(0)}=0$. 
For details of the small error induced by this approximation see~\cite{ledgerzaglmayr2010,zaglmayrphd}.
Then, by choosing an appropriate set of $\bm{H}(\text{curl})$ conforming finite element functions in $W^{(hp)} \subset \bm{H}(\text{curl})$, we obtain the following discrete regularised weak form for (\ref{eqn:Theta0}) : Find real solutions $\tilde{\bm{\theta}}_i^{(0,hp)} \in Y^{\varepsilon}\cap W^{(hp)} $ such that
\begin{align}\label{Weak0}
\int_{\Omega} \tilde{\mu}_r^{-1} \nabla \times \tilde{\bm{\theta}}_i^{(0,hp)} \cdot \nabla \times \bm{\psi}^{(hp)} \dif \bm{\xi}  &+ \varepsilon \int_\Omega \tilde{\bm{\theta}}_i^{(0,hp)} \cdot \bm{\psi}^{(hp)} \dif \bm{\xi}\nonumber\\
&=2 \int_B(1-\mu_r^{-1}) \bm{e}_i \cdot \nabla \times \bm{\psi}^{(hp)} \dif \bm{\xi},
\end{align}
for all $\bm{\psi}^{(hp)} \in Y^{\varepsilon} \cap W^{(hp)}$, where 
$$Y^{\varepsilon} = \Big\{ \bm{u} \in \bm{H}(\text{curl}) : {\bm{n}} \times {\bm{u}} =\bm{0} \textrm{ on } \partial \Omega  \Big\}.$$
In a similar manner, the discrete weak variational statement of (\ref{eqn:Theta1}) is:  Find complex solutions ${\bm{\theta}}_i^{(1,hp)} \in Y^{\varepsilon}\cap W^{(hp)} $ such that
\begin{align}\label{Weak1}
\int_{\Omega}\big(\mu_r^{-1}\nabla\times\bm{\theta}_i^{(1,hp)}\big)&\cdot\big(\nabla\times\overline{\bm{\psi}^{(hp)}}\big) \dif \bm{\xi}-\im \int_{B}\nu\bm{\theta}_i^{(1,hp)}\cdot\overline{\bm{\psi}^{(hp)}} \dif\bm{\xi}\nonumber\\
&+\varepsilon\int_{\Omega\setminus B}\bm{\theta}_i^{(1,hp)}\cdot\overline{\bm{\psi}^{(hp)}}\dif \bm{\xi}=\im \int_B\nu\bm{\theta}_i^{(0,hp)}\cdot\overline{\bm{\psi}^{(hp)}} \dif\bm{\xi},
\end{align}
for all $\bm{\psi}^{(hp)} \in Y^{\varepsilon} \cap W^{(hp)}$ where the overbar denotes the complex conjugate.

For what follows it is beneficial to restate (\ref{Weak1}) in the following form: Find ${\bm{\theta}}_i^{(1,hp)} \in Y^{\varepsilon}\cap W^{(hp)}$ such that
\begin{equation}\label{bilinear}
a\big(\bm{\theta}_i^{(1,hp)},\bm{\psi}^{(hp)};\bm{\omega}\big)=r\big(\bm{\psi}^{(hp)}; \bm{\theta}_i^{(0,hp)} ,\bm{\omega}\big),
\end{equation}
for all $\bm{\psi}^{(hp)} \in Y^{\varepsilon} \cap W^{(hp)}$ where
\begin{subequations}\label{eqn:BilinearExpanded}
\begin{align}
a\big(\bm{\theta}_i^{(1,hp)},\bm{\psi}^{(hp)};\bm{\omega}\big)\vcentcolon&=
\left < \tilde{\mu}^{-1} \nabla\times\bm{\theta}_i^{(1,hp)}, \nabla\times {\bm{\psi}^{(hp)}} \right>_{L^2(\Omega)} \nonumber\\
&- \im \left <  \nu\bm{\theta}_i^{(1,hp)} , {\bm{\psi}^{(hp)}} \right >_{L^2(B)} \nonumber \\
&+ \varepsilon \left < \bm{\theta}_i^{(1,hp)}, \bm{\psi}^{(hp)} \right >_{L^2(\Omega\setminus B)}, \\
r\big(\bm{\psi}^{(hp)};\bm{\theta}_i^{(0,hp)}, \bm{\omega}\big)\vcentcolon&=
\im \left < \nu \bm{\theta}_i^{(0,hp)}, {\bm{\psi}^{(hp)}} \right >_{L^2(B)},
\end{align}
\end{subequations}
$ \left < \bm{u},\bm{v} \right >_{L^2(\Omega)} := \int_\Omega {\bm{u}} \cdot \overline{\bm{v}} \dif \bm {\xi}$ denotes the $L^2$ inner product over $\Omega$
and $\bm{\omega}$ indicates the list of the problem parameters $\{\omega,\sigma_*, \mu_r,\alpha\}$ that one might wish to vary. Note that $r\big(\cdot ;\cdot, \cdot \big)$ is a function of $\mu_r$ as $\bm{\theta}_i^{(0,hp)}$ depends on $\mu_r$.
\subsection{Finite element discretisation} \label{sect:fem}
For the implementation of the full order model, we use \texttt{NGSolve} \cite{NGSolvecode,NGSolve,netgendet,zaglmayrphd} along with the hierarchic set of $\bm{H}(\text{curl})$ conforming basis functions  proposed by Sch\"{o}berl and Zaglmayr~\cite{SchoberlZaglmayr2005}, which are available in this software. In the following, for simplicity, we focus on the treatment of $\bm{\theta}_i^{(1,hp)}$ and drop the index $i$ as each direction can be computed in a similar way (as can $\tilde{\bm{\theta}}_i^{(0,hp)}$).
We denote these basis functions with $\bm{N}^{(k)}(\bm{\xi})\in W^{(hp)}$ leading to the expression of the solution function along with the weighting functions
\begin{subequations}\label{FE deconstruct}
\begin{align}
\bm{\theta}^{(1,hp)}(\bm{\xi},\bm{\omega})&\vcentcolon=\sum_{k=1}^{N_d}\bm{N}^{(k)}(\bm{\xi})\mathrm{q}_k(\bm{\omega}),\\
\bm{\psi}^{(hp)}(\bm{\xi},\bm{\omega})&\vcentcolon=\sum_{k=1}^{N_d}\bm{N}^{(k)}(\bm{\xi})\mathrm{l}_k(\bm{\omega}),
\end{align}
\end{subequations}
where $N_d$ is the number of degrees of freedom. Here, and in the following,  the bold italic font
denotes a vector  field and the bold non-italic Roman font represents a matrix (upper case) or column vector (lower case). With this distinction, we rewrite (\ref{FE deconstruct}) in matrix form as
\begin{subequations}\label{solution breakdown}
\begin{align}
\bm{\theta}^{(1,hp)}(\bm{\xi},\bm{\omega})&=\textbf{N}(\bm{\xi})\textbf{q}(\bm{\omega}),\\
\bm{\psi}^{(hp)}(\bm{\xi},\bm{\omega})&=\textbf{N}(\bm{\xi})\textbf{l}(\bm{\omega}),
\end{align}
\end{subequations}
where $\textbf{N}(\bm{\xi})$ is the matrix constructed with the basis vectors $\bm{N}^k(\bm{\xi})$ as its columns, i.e.
$$\textbf{N}(\bm{\xi})\vcentcolon=\big [\bm{N}^{(1)}(\bm{\xi}),\bm{N}^{(2)}(\bm{\xi}),...,\bm{N}^{(N_{d})} (\bm{\xi} )\big ].$$
With this, we may also rewrite (\ref{bilinear}) as follows
\begin{equation}\label{eqn:basisbilinear}
\sum_{i=1}^{N_d}\sum_{j=1}^{N_d}\overline{l_i(\bm{\omega})}a\big(\bm{N}^{(j)}(\bm{\xi}),\bm{N}^{(i)}(\bm{\xi});\bm{\omega}\big)q_j(\bm{\omega})=\sum_{i=1}^{N_d}\overline{l_i(\bm{\omega})}r\big(\bm{N}^{(i)}(\bm{\xi}); \bm{\theta}^{(0,hp)},  \bm{\omega}\big),
\end{equation}
and, with a suitable choice of $l_i(\bm{\omega})$, we may rewrite (\ref{eqn:basisbilinear}) as the linear system of equations
\begin{equation}\label{eqn:Linear}
\textbf{A}(\bm{\omega})\textbf{q}(\bm{\omega})=\textbf{r}(\bm{\theta}^{(0,hp)} , \bm{\omega}),
\end{equation}
where the coefficients of $\bf{A}(\bm{\omega})$ and ${\bf r}({\bm \theta}^{(0,hp)}, \bm{\omega})$ are defined to be
\begin{subequations}
\label{eqn:definestiffandrhs}
\begin{align}
(\textbf{A}(\bm{\omega}))_{ij}&\vcentcolon=a\big(\bm{N}^{(j)}(\bm{\xi}),\bm{N}^{(i)}(\bm{\xi});\bm{\omega}\big),\\
(\textbf{r}( \bm{\theta}^{(0,hp)}, \bm{\omega}))_{i}&\vcentcolon=r\big(\bm{N}^{(i)}(\bm{\xi}); \bm{\theta}^{(0,hp)}, \bm{\omega}\big).
\end{align}
\end{subequations}
\texttt{NGSolve} offers efficient approaches for the computational solution to (\ref{eqn:Linear}) using preconditioned iterative solvers~\cite{zaglmayrphd,ledgerzaglmayr2010}, which we exploit. Following the solution of  (\ref{eqn:Linear}), we can obtain $\bm{\theta}^{(1,hp)}(\bm{\xi},\bm{\omega})$ using (\ref{solution breakdown}) and, by repeating the process for $i=1,2,3$, we get  $\bm{\theta}_i^{(1,hp)}(\bm{\xi},\bm{\omega})$. Then $(\mathcal{M}[\alpha B, \omega, \mu_r, \sigma_*])_{ij}$, for the full order model, is found by using (\ref{eqn:NRI}).

\section{Reduced order model (ROM)}\label{ROM}

A traditional approach for the computation of the MPT spectral signature, i.e. the variation of the coefficients ${\mathcal M}[\alpha B, \omega]$  with frequency,  would involve the repeated solution of the $N_d$ sized system (\ref{eqn:Linear}) for different $\omega$. To reduce the computational cost of this, we wish to apply a  ROM in which the solution of (\ref{eqn:Linear}) is replaced by a surrogate problem of reduced size. Thus, reducing both the computation cost and time to produce a solution for each new $\omega$.  In particular, in Section~\ref{POD},  we describe a ROM based on the POD method ~\cite{Chatterjee2000,AbdiWilliams2010,hesthaven2016,Seoane2019} and, in Section~\ref{sect:podp}, apply the variant called projection based POD (which we denote by PODP), which has already been shown to work well in the analysis of magneto-mechanical coupling applied to MRI scanners~\cite{Seoane2019}. 
To emphasise the generality of the approach, the formulation is presented for an arbitrary list of problem parameters denoted by $\bm{\omega}$. 
In Section~\ref{sect:outputcert} we derive a procedure for computing certificates of accuracy on the ROM solutions with negligible additional cost.

\subsection{Proper orthogonal decomposition}\label{POD}
Following the solution of (\ref{eqn:Linear}) for  $\mathbf{q}(\bm{\omega})$  for different values of the set of parameters, $\bm{\omega}$, we construct a matrix $\mathbf{D}\in\mathbb{C}^{N_d\times N}$ with the vector of solution coefficients as its columns in the form
\begin{equation}\label{D}
\mathbf{D}\vcentcolon=\big[\mathbf{q}(\bm{\omega}_1),\mathbf{q}(\bm{\omega}_2),...,\mathbf{q}(\bm{\omega}_{N})\big],
\end{equation}
where $N\ll N_d$ denote the number of snapshots. Application of a singular value decomposition (SVD)~e.g.~\cite{bjorck,hansen} gives
\begin{equation}\label{eq:SVD}
\mathbf{D}=\mathbf{U\Sigma V}^*,\end{equation}
where $\mathbf{U}\in\mathbb{C}^{N_d\times N_d}$ and $\mathbf{V}^*\in\mathbb{C}^{N\times N}$ are unitary matrices and $\mathbf{\Sigma}\in\mathbb{R}^{N_d\times N}$ is a diagonal matrix enlarged by zeros so that it becomes rectangular. In the above, $\mathbf{V}^*= \overline{\mathbf{V}}^T$ is the hermitian of $\mathbf{V}$.

The diagonal entries $(\mathbf{\Sigma})_{ii}=\sigma_i$~\footnote{Note that $\sigma_*$ is used for conductivity and $\sigma_i$ for a singular value, however, it should be clear from the application as to which definition applies} are the singular values of $\mathbf{D}$ and they are arranged as  $\sigma_1>\sigma_2>...>\sigma_{N}$. Based on the sparse representation of the solutions to (\ref{eqn:Theta1}) as function of $\nu$, and hence $\omega$,
 (and hence also the sparse representation of the MPT) found in~\cite{LedgerLionheart2019},  we expect these to decay rapidly towards zero, which motivates the introduction of 
 the truncated singular value decomposition (TSVD)~e.g.~\cite{bjorck,hansen}
\begin{equation}\label{eq:truncSVD}
\mathbf{D}\approx
\mathbf{D}^M = \mathbf{U}^M\mathbf{\Sigma}^M(\mathbf{V}^M)^*,
\end{equation}
where $\mathbf{U}^M\in\mathbb{C}^{N_d\times M} $ are the first $M$ columns of $\mathbf{U}$, $\mathbf{\Sigma}^M\in{\mathbb R}^{M\times M}$ is a diagonal matrix containing the first $M$ singular values and $(\mathbf{V}^M)^*\in{\mathbb C}^{M\times N}$ are the first $M$ rows of $\mathbf{V}^*$. The computation of (\ref{eq:truncSVD}) constitutes the off-line stage of the POD.
Using (\ref{eq:truncSVD}) we can recover an approximate representation for each of our solution snapshots as follows
\begin{equation}
\mathbf{q}(\bm{\omega}_j)\approx\mathbf{U}^M\mathbf{\Sigma}^M((\mathbf{V}^M)^*)_j,
\end{equation}
where $((\mathbf{V}^M)^*)_j$ refers to the $j$th column of $(\mathbf{V}^M)^*$.
\subsection{Projection based proper orthogonal decomposition (PODP)} \label{sect:podp}
 In the online stage of PODP, $\mathbf{q}^{PODP} ( {\bm \omega}) \approx\mathbf{q}({\bm \omega})$  is obtained  by taking a linear combination of the columns of ${\bf U}^M$ where the coefficients of this projection are contained in the vector ${\bf p}^M$. We choose to also approximate $\mathbf{l}({\bm \omega})$ in a similar way so that
\begin{subequations}\label{eqn:solution:rebreakdown}
\begin{align}
\bm{\theta}^{(1,hp)}(\bm{\xi},\bm{\omega}) \approx(\bm{\theta}^{(1,hp)})^{\text{PODP}} ({\bm \xi}, \bm{\omega)} :=& \textbf{N}(\bm{\xi})
\mathbf{q}^{PODP} ( {\bm \omega}) = 
\textbf{N}(\bm{\xi}) \mathbf{U}^M\textbf{p}^M( \bm{\omega})   \in Y^{(PODP)} , \\
\bm{\psi}^{(hp)}(\bm{\xi},\bm{\omega})\approx(\bm{\psi}^{(1,hp)})^{\text{PODP}} ({\bm \xi}, \bm{\omega)} :=&
\textbf{N}(\bm{\xi})\textbf{l}^{PODP} (\bm{\omega})=
\textbf{N}(\bm{\xi})\mathbf{U}^M\textbf{o}^M(\bm{\omega})  \in Y^{(PODP)},
\end{align}
\end{subequations}
where $ \in Y^{(PODP)}\subset Y^\varepsilon \cap W^{(hp)}$. 
Substituting these  lower dimensional representations in to (\ref{eqn:basisbilinear}) we obtain the following
\begin{align}
\sum_{i=1}^{M}\sum_{j=1}^{M}\overline{o^M_i(\bm{\omega})}a\big(\bm{N}^{(j)}(\bm{\xi})( \mathbf{U}^{M})_j &,\bm{N}^{(i)}(\bm{\xi})(\mathbf{U}^{M})_i;\bm{\omega}\big)p^M_j(\bm{\omega})\nonumber\\
&=\sum_{i=1}^{M}\overline{o^M_i(\bm{\omega})}r\big(\bm{N}^{(i)}(\bm{\xi})( \mathbf{U}^{M})_i; \bm{\theta}^{(0,hp)},\bm{\omega}\big), \nonumber \\
({\mathbf{o}^M} (\bm{\omega}))^* ( (\mathbf{U}^M)^*\mathbf{A}(\bm{\omega}) \mathbf{U}^M )\mathbf{p}^M ( \bm{\omega}) & =
({\mathbf{o}^M}(\bm{\omega}))^*(\mathbf{U}^M)^*
\mathbf{r}(\bm{\theta}^{(0,hp)},  \bm{\omega}).
\label{eqn:basisbilinearPOD}
\end{align}
Then, if we choose $\mathbf{o}^M (\bm{\omega})$ appropriately, we obtain the linear system
\begin{equation}\label{eqn:ReducedA}
\mathbf{A}^M(\bm{\omega})\mathbf{p}^M(  \bm{\omega})=\mathbf{r}^M(\bm{\theta}^{(0,hp)}, \bm{\omega}),
\end{equation}
which is of size $M\times M$ where $\mathbf{A}^M(\bm{\omega})\vcentcolon=(\mathbf{U}^M)^*\mathbf{A}(\bm{\omega})\mathbf{U}^M$ and $\mathbf{r}^M(\bm{\theta}^{(0,hp)} , \bm{\omega})\vcentcolon=(\mathbf{U}^M)^*\mathbf{r} ( \bm{\theta}^{(0,hp)}, \bm{\omega})$. Note, since $M<N \ll N_d$, this is significantly smaller than (\ref{eqn:Linear}) and, therefore, substantially computationally cheaper to solve. After solving this reduced system, and obtaining $\mathbf{p}^M(\bm{\omega})$, we obtain an approximate solution for $\bm{\theta}^{(1,hp)}(\bm{\xi},\bm{\omega})$ using (\ref{eqn:solution:rebreakdown}).

Focusing on the particular case where $\bm{\omega}=\omega$,  from (\ref{eqn:BilinearExpanded}) we observe that we can express  $\mathbf{A}$ and $\mathbf{r}$ as the simple sums
\begin{align}
\mathbf{A}(\omega)=& \mathbf{A}^{(0)} + \omega  \mathbf{A}^{(1)} , \nonumber \\
\mathbf{r}(\bm{\theta}^{(0,hp)},\omega)=&\omega \mathbf{r}^{(1)}(\bm{\theta}^{(0,hp)}) , \nonumber
\end{align}
where the definitions of $ \mathbf{A}^{(0)}$, $ \mathbf{A}^{(1)}$ and $\mathbf{r}^{(1)}(\bm{\theta}^{(0,hp)}) $ are obvious from (\ref{eqn:definestiffandrhs}),(\ref{eqn:BilinearExpanded}) and the definition of $\nu$.
Then, by computing and storing $(\mathbf{U}^M)^*\mathbf{A}^{(0)}  \mathbf{U}^M$,  $(\mathbf{U}^M)^*\mathbf{A}^{(1)}  \mathbf{U}^M$ , $(\mathbf{U}^M)^*\mathbf{r}^{(1)}(\bm{\theta}^{(0,hp)})$, which are independent of $\omega$, it follows that $\mathbf{A}^M({\omega})$ and $\mathbf{r}^M(\bm{\theta}^{(0,hp)},{\omega})$ can be efficiently calculated for each new $\omega$ from the stored data. In a similar manner, by precomputing appropriate data, the MPT coefficients in (\ref{eqn:NRI}) can also be rapidly evaluated for each new $\omega$ using the PODP solutions. This leads to further considerable computational savings. We emphasise that the PODP is only applied to obtain ROM solutions for $\bm{\theta}^{(1)}(\bm{\xi},{\omega})$ and not to $\bm{\theta}^{(0)}(\bm{\xi})$, which does not depend on $\omega$.

\subsection{PODP output certification} \label{sect:outputcert}

We follow the approach described in~\cite{hesthaven2016}, which enables us to derive and compute certificates of accuracy on the MPT coefficients obtained with PODP, with respect to those obtained with full order model, as a function of $\omega$. To do this, we set $\bm{\epsilon}_i (\omega)= \bm{\theta}_i^{(1,hp)} (\omega)- (\bm{\theta}_i^{(1,hp)})^{\text{PODP}} (\omega) \in Y^{(hp)}$, where we have reintroduced the subscript $i$, as we need to distinguish between the cases $i=1,2,3$. Although $\bm{\epsilon}_i$ also depends on $\bm{\xi}$, we  have chosen here, and in the following, to only emphasise its dependence on $\omega$. We have also introduced $Y^{(hp)}= Y^\varepsilon \cap W^{(hp)}$ for simplicity of notation, and note that this error satisfies
\begin{align}
a(\bm{\epsilon}_i (\omega), \bm{\psi}; {\omega} ) =r (\bm{\psi};\bm{\theta}_i^{(0,hp)} ,\omega) \qquad \forall \bm{\psi} \in Y^{(hp)}, \label{eqn:erroreqn}
\end{align}
which is called the error equation~\cite{hesthaven2016} and
\begin{align}
a(\bm{\epsilon}_i (\omega) , \bm{\psi} ; \omega) =0 \qquad \forall \bm{\psi} \in Y^{(PODP)},
\end{align}
which is called Galerkin orthogonality~\cite{hesthaven2016}. The Riesz representation~\cite{hesthaven2016} of $r (\cdot ;\bm{\theta}_i^{(0,hp)}, \omega)$ denoted by $\hat{\bm{r}}_i(\omega) \in Y^{(hp)}$ is such that
\begin{align}
(\hat{\bm{r}}_i (\omega) , \bm{\psi} )_{Y^{(hp)}} =r(\bm{\psi};\bm{\theta}_i^{(0,hp)},\omega) \qquad \forall \bm{\psi} \in Y^{(hp)}, \label{eqn:riesz}
\end{align}
so that 
\begin{align}
a(\bm{\epsilon}_i(\omega), \bm{\psi}; {\omega} ) =(\hat{\bm{r}}_i(\omega) , \bm{\psi} )_{Y^{(hp)}} \qquad \forall \bm{\psi} \in Y^{(hp)}.
\end{align}
Then, by using the alternative set of formulae  for the tensor coefficients~\cite{LedgerLionheart2019}
\begin{subequations} \label{eqn:tensoraltform}
\begin{align}
(\mathcal{R}[\alpha B, \omega])_{ij}&=-\frac{\alpha^3}{4} \int_{B} \nu\text{Im} ( \bm{\theta}_j^{(1,hp)} ) \cdot \bm{\theta}_i^{(0,hp)} \dif \bm{\xi}  =  -\frac{\alpha^3}{4} \left <  \nu\text{Im} ( \bm{\theta}_j^{(1,hp)} ), \bm{\theta}_i^{(0,hp)}  \right >_{L^2(B)},
\\
(\mathcal{I}[\alpha B, \omega])_{ij}&= \frac{\alpha^3}{4} \left ( 
\int_{B} \nu\text{Re} ( \bm{\theta}_j^{(1,hp)} ) \cdot \bm{\theta}_i^{(0,hp)}
 \dif \bm{\xi} + \int_{B} \nu \bm{\theta}_j^{(0,hp)} \cdot \bm{\theta}_i^{(0,hp)}\dif \bm{\xi} 
\right ) \nonumber \\
& = \frac{\alpha^3}{4}\left (  \left <  \nu\text{Re} ( \bm{\theta}_j^{(1,hp)} ), \bm{\theta}_i^{(0,hp)}  \right >_{L^2(B)} + \left <  \nu
 \bm{\theta}_j^{(0,hp)}
, \bm{\theta}_i^{(0,hp)}  \right >_{L^2(B)} \right ),
\end{align}
\end{subequations}
written in terms of the full order solutions,
we obtain the certificates for the tensor entries computed using PODP stated in the lemma below. Note that the  formulae stated in (\ref{eqn:NRI})  are used for the actual POD computation of $(\mathcal{R}^{PODP}[\alpha B, \omega])_{ij}$ and $(\mathcal{I}^{PODP}[\alpha B, \omega])_{ij}$, but the form in (\ref{eqn:tensoraltform}) is useful for obtaining certificates. Also, as $(\mathcal{N}[\alpha B])_{ij}$ is independent of $\omega$ we have 
$(\mathcal{N}^{0,PODP}[\alpha B])_{ij}= (\mathcal{N}^{0 }[\alpha B])_{ij}$ and we write $\mathcal{M}^{PODP}[\alpha B, \omega] = \mathcal{N}^{0,PODP}[\alpha B]+ \mathcal{R}^{PODP}[\alpha B, \omega]+ \im \mathcal{I}^{PODP}[\alpha B, \omega]$ for the MPT obtained by PODP.

\begin{lemma}
An error certificate for the tensor coefficients computed using PODP is
\begin{subequations}
\label{eqn:certifcate}
\begin{align}
\left | (\mathcal{R}[\alpha B, \omega])_{ij} - (\mathcal{R}^{PODP}[\alpha B, \omega])_{ij}  \right |\le & (\Delta[\omega])_{ij} ,\\
\left | (\mathcal{I}[\alpha B, \omega])_{ij} - (\mathcal{I}^{PODP}[\alpha B, \omega])_{ij}  \right | \le &(\Delta[\omega])_{ij},
\end{align}
\end{subequations}
where 
\begin{align}
(\Delta[\omega])_{ij}: =  \frac{\alpha^3}{8\alpha_{LB}}
\left (  \| \hat{\bm{r}}_i (\omega) \|_{Y^{(hp)}}^2 + \| \hat{\bm{r}}_j (\omega) \|_{Y^{(hp)}}^2 + \| \hat{\bm{r}}_i (\omega) - \hat{\bm{r}}_j (\omega) \|_{Y^{(hp)}}^2 
\right ) ,  \nonumber
\end{align}
and $\alpha_{LB}$ is a lower bound on a stability constant.
\end{lemma}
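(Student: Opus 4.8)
The plan is to express both tensor errors directly in terms of the field error $\bm{\epsilon}_i(\omega)$, convert them into the bilinear form $a$ via the error equation, and then bound the result using the Riesz representation and a stability estimate. First I would exploit that PODP is applied only to $\bm{\theta}_i^{(1)}$, so $\bm{\theta}_i^{(0,hp)}$ is common to the full order and reduced models. Subtracting the PODP versions of the alternative formulae (\ref{eqn:tensoraltform}) from the full order versions therefore annihilates every term containing only $\bm{\theta}^{(0,hp)}$ (in particular the second integral in (\ref{eqn:tensoraltform}b)) and leaves, for the real part,
\[
(\mathcal{R}[\alpha B,\omega])_{ij} - (\mathcal{R}^{PODP}[\alpha B,\omega])_{ij} = -\frac{\alpha^3}{4}\left< \nu\,\text{Im}(\bm{\epsilon}_j(\omega)), \bm{\theta}_i^{(0,hp)}\right>_{L^2(B)},
\]
with the analogous expression for $\mathcal{I}$ having $\text{Re}(\bm{\epsilon}_j)$ in place of $\text{Im}(\bm{\epsilon}_j)$ and the opposite sign.

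Next I would recognise these inner products as the real (respectively imaginary) part of the functional $r(\bm{\epsilon}_j;\bm{\theta}_i^{(0,hp)},\omega)=\im\langle \nu\bm{\theta}_i^{(0,hp)},\bm{\epsilon}_j\rangle_{L^2(B)}$, which holds because $\nu$ and $\bm{\theta}_i^{(0,hp)}$ are real. Applying the error equation (\ref{eqn:erroreqn}) with the specific test function $\bm{\psi}=\bm{\epsilon}_j$ then converts this into the bilinear quantity $a(\bm{\epsilon}_i,\bm{\epsilon}_j;\omega)$. Since $\mathcal{R}$ and $\mathcal{I}$ are symmetric, I would symmetrise by averaging the $(i,j)$ and $(j,i)$ expressions, so that $a(\bm{\epsilon}_i,\bm{\epsilon}_j;\omega)$ is replaced by $\tfrac12\big(a(\bm{\epsilon}_i,\bm{\epsilon}_j;\omega)+a(\bm{\epsilon}_j,\bm{\epsilon}_i;\omega)\big)$, which is precisely the combination to which the polarisation identity
\[
a(\bm{\epsilon}_i,\bm{\epsilon}_j;\omega)+a(\bm{\epsilon}_j,\bm{\epsilon}_i;\omega)=a(\bm{\epsilon}_i,\bm{\epsilon}_i;\omega)+a(\bm{\epsilon}_j,\bm{\epsilon}_j;\omega)-a(\bm{\epsilon}_i-\bm{\epsilon}_j,\bm{\epsilon}_i-\bm{\epsilon}_j;\omega)
\]
applies (this needs only sesquilinearity, with the real $\pm1$ coefficients posing no difficulty). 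I would then replace each diagonal term using the Riesz form of the error equation (\ref{eqn:riesz}), $a(\bm{\epsilon}_i,\bm{\psi};\omega)=(\hat{\bm{r}}_i(\omega),\bm{\psi})_{Y^{(hp)}}$, together with its linear combination for $\bm{\epsilon}_i-\bm{\epsilon}_j$ (whose right-hand side is $\hat{\bm{r}}_i-\hat{\bm{r}}_j$), evaluated at $\bm{\psi}\in\{\bm{\epsilon}_i,\bm{\epsilon}_j,\bm{\epsilon}_i-\bm{\epsilon}_j\}$. A Cauchy--Schwarz estimate in the $Y^{(hp)}$ inner product, followed by the field error bound $\|\bm{\epsilon}_i(\omega)\|_{Y^{(hp)}}\le\|\hat{\bm{r}}_i(\omega)\|_{Y^{(hp)}}/\alpha_{LB}$, then yields the three terms $\|\hat{\bm{r}}_i\|_{Y^{(hp)}}^2+\|\hat{\bm{r}}_j\|_{Y^{(hp)}}^2+\|\hat{\bm{r}}_i-\hat{\bm{r}}_j\|_{Y^{(hp)}}^2$ with prefactor $\alpha^3/(8\alpha_{LB})$, giving $(\Delta[\omega])_{ij}$. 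Using $|\text{Re}(\cdot)|\le|\cdot|$ and $|\text{Im}(\cdot)|\le|\cdot|$ then treats $\mathcal{R}$ and $\mathcal{I}$ simultaneously.

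The main obstacle is the field error bound $\|\bm{\epsilon}_i\|_{Y^{(hp)}}\le\|\hat{\bm{r}}_i\|_{Y^{(hp)}}/\alpha_{LB}$ and, with it, a computable lower bound $\alpha_{LB}$ on the stability constant. This rests on an inf--sup (or coercivity) estimate for $a(\cdot,\cdot;\omega)$ on $Y^{(hp)}$, from which $\alpha_{LB}\|\bm{\epsilon}_i\|_{Y^{(hp)}}\le\sup_{\bm{\psi}}|a(\bm{\epsilon}_i,\bm{\psi};\omega)|/\|\bm{\psi}\|_{Y^{(hp)}}=\|\hat{\bm{r}}_i\|_{Y^{(hp)}}$ follows by the definition of the Riesz representative. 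Establishing such an estimate is delicate because $a$ is complex-valued and non-Hermitian, owing to the $-\im\nu$ term and the regularised curl--curl principal part, so $\alpha_{LB}$ must be taken as a rigorous lower bound on the inf--sup constant rather than read off directly; everything else in the argument is routine once this stability input is available.
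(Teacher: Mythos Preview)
Your proposal is correct and follows essentially the same route as the paper. The only cosmetic difference is where the ``polarisation'' step happens: the paper adds and subtracts directly at the level of the $L^2(B)$ pairings $\langle \nu\,\text{Im}(\bm{\epsilon}_i),\bm{\theta}_j^{(0,hp)}\rangle$ to produce the three diagonal-type terms, whereas you first pass to $a(\bm{\epsilon}_i,\bm{\epsilon}_j;\omega)$ via the error equation and then invoke the algebraic polarisation identity on the sesquilinear form; both yield the same three quantities $|a(\bm{\epsilon}_i,\bm{\epsilon}_i)|$, $|a(\bm{\epsilon}_j,\bm{\epsilon}_j)|$, $|a(\bm{\epsilon}_i-\bm{\epsilon}_j,\bm{\epsilon}_i-\bm{\epsilon}_j)|$ and the same final bound $\|\hat{\bm{r}}_i\|^2/\alpha_{LB}$ for each, with the paper writing this as $\|\bm{\epsilon}_i\|_\omega^2\le\|\hat{\bm{r}}_i\|^2/\alpha_{LB}$ rather than going through Cauchy--Schwarz plus the field bound.
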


\begin{proof}
We concentrate on the proof for $ \left | (\mathcal{R}[\alpha B, \omega])_{ij} - (\mathcal{R}^{PODP}[\alpha B, \omega])_{ij}  \right |$ as the proof  for the second bound is similar and leads to the same result. Recalling the symmetry of $\mathcal{R}[\alpha B, \omega]$,  we have $  (\mathcal{R}[\alpha B, \omega])_{ij} = \frac{1}{2} \left (
 (\mathcal{R}[\alpha B, \omega])_{ij} +  (\mathcal{R}[\alpha B, \omega])_{ji}  \right )$
 so that
 \begin{align}
D:=\left | (\mathcal{R}[\alpha B, \omega])_{ij} - (\mathcal{R}^{PODP}[\alpha B, \omega])_{ij}  \right | =&
 \frac{\alpha^3}{8} \left | \left <  \nu\text{Im} ( \bm{\epsilon}_i ), \bm{\theta}_j^{(0,hp)}  \right >_{L^2(B)}+  \left <  \nu\text{Im} ( \bm{\epsilon}_j ), \bm{\theta}_i^{(0,hp)}  \right >_{L^2(B)}
 \right | \nonumber \\
=& \frac{\alpha^3}{8} \left |
  \left <  \nu\text{Im} ( \bm{\epsilon}_i ), \bm{\theta}_i^{(0,hp)}   \right >_{L^2(B)}+
  \left <  \nu\text{Im} ( \bm{\epsilon}_i ),\bm{\theta}_j^{(0,hp)} -  \bm{\theta}_i^{(0,hp)}   \right >_{L^2(B)}+ \right . \nonumber\\
  &\left .
  \left <  \nu\text{Im} ( \bm{\epsilon}_j ), \bm{\theta}_j^{(0,hp)}  \right >_{L^2(B)} + 
  \left <  \nu\text{Im} ( \bm{\epsilon}_j ), \bm{\theta}_i^{(0,hp)} -\bm{\theta}_j^{(0,hp)}  \right >_{L^2(B)}
 \right | \nonumber \\
= & \frac{\alpha^3}{8} \left |
  \left <  \nu\text{Im} ( \bm{\epsilon}_i ), \bm{\theta}_i^{(0,hp)}   \right >_{L^2(B)}+
  \left <  \nu\text{Im} ( \bm{\epsilon}_i - \bm{\epsilon}_j ),\bm{\theta}_j^{(0,hp)} -  \bm{\theta}_i^{(0,hp)}   \right >_{L^2(B)}+ \right . \nonumber\\
  &\left .
  \left <  \nu\text{Im} ( \bm{\epsilon}_j ), \bm{\theta}_j^{(0,hp)}  \right >_{L^2(B)}  \right | \nonumber ,
  \end{align}
which follows since $\nu$ and $\bm{\theta}_i^{(0,hp)}$ are real valued and  where we have dropped the dependence of $\omega$ on $\bm{\epsilon}_i$  for simplicity of presentation.
  Thus,
 \begin{align}
 D\le  \frac{\alpha^3}{8}\left ( \left |
  \left <  \nu  \bm{\epsilon}_i , \bm{\theta}_i^{(0,hp)}   \right >_{L^2(B)} \right | + \left | \left <  \nu (\bm{\epsilon}_i - \bm{\epsilon}_j ),\bm{\theta}_j^{(0,hp)} -  \bm{\theta}_i^{(0,hp)}   \right >_{L^2(B)} \right | + \left |  \left <  \nu \bm{\epsilon}_j , \bm{\theta}_j^{(0,hp)}  \right >_{L^2(B)}  \right | \right ) .\nonumber
  \end{align}
  Next, using (\ref{eqn:erroreqn}),  we make the observation that
  \begin{align}
  \left |  \left <  \nu  \bm{\epsilon}_i , \bm{\theta}_i^{(0,hp)}   \right >_{L^2(B)} \right | = \left | r(\bm{\epsilon}_i; \bm{\theta}_i^{(0,hp)} ,\omega ) \right  | = \left | a(\bm{\epsilon}_i , \bm{\epsilon}_i  ; {\omega} )  \right | = \| \bm{\epsilon}_i  \|_\omega^2.
  \nonumber 
  \end{align}
  Also, since $r( \bm{\psi};  \bm{\theta}_j^{(0,hp)} - \bm{\theta}_i^{(0,hp)} ,\omega )=  a(  \bm{\theta}_j^{(1,hp)} (\omega) - \bm{\theta}_i^{(1,hp)} (\omega), \bm{\psi};\omega) =  a(  \bm{\epsilon}_j- \bm{\epsilon}_i, \bm{\psi};\omega) $ for all $\bm{\psi} \in Y^{(hp)}$,
  we have
  $  \left  | \left <  \nu  \bm{\psi} , \bm{\theta}_j^{(0,hp)} - \bm{\theta}_i^{(0,hp)}   \right >_{L^2(B)} \right | = \left | a(  \bm{\epsilon}_j- \bm{\epsilon}_i, \bm{\psi};\omega) \right | $ so that
  \begin{align}
  \left |  \left <  \nu ( \bm{\epsilon}_i -\bm{\epsilon}_j ) , \bm{\theta}_j^{(0,hp)} - \bm{\theta}_i^{(0,hp)}    \right >_{L^2(B)} \right | = \left | r(\bm{\epsilon}_i-\bm{\epsilon}_j;   \bm{\theta}_j^{(0,hp)} - \bm{\theta}_i^{(0,hp)}  , \omega ) \right  | = \left | a(\bm{\epsilon}_j -\bm{\epsilon}_i , \bm{\epsilon}_i  - \bm{\epsilon}_j  ; {\omega} )  \right | = \| \bm{\epsilon}_i - \bm{\epsilon}_j  \|_\omega^2,
  \nonumber 
  \end{align}
  and hence
  \begin{align}
   D \le  \frac{\alpha^3}{8}\left (  \| \bm{\epsilon}_i  \|_\omega^2 + \| \bm{\epsilon}_i - \bm{\epsilon}_j  \|_\omega^2 + \|  \bm{\epsilon}_j  \|_\omega^2\right ) \label{eqn:bderrors}.
  \end{align}
 Following similar steps to~\cite[pg47-50]{hesthaven2016}, and introducing a Riesz representation in (\ref{eqn:riesz}), we  can find that  
  \begin{align}
  \| \bm{\epsilon}_i  \|_\omega^2 \le \frac{\| \hat{\bm{r}}_i (\omega) \|_{Y^{(hp)}}^2}{\alpha_{LB}} , \qquad \| \bm{\epsilon}_j \|_\omega^2 \le \frac{ \| \hat{\bm{r}}_j (\omega) \|_{Y^{(hp)}}^2}{\alpha_{LB}} ,
   \qquad \| \bm{\epsilon}_i - \bm{\epsilon}_j \|_\omega^2 \le \frac{ \| \hat{\bm{r}}_i(\omega)-  \hat{\bm{r}}_j (\omega) \|_{Y^{(hp)}}^2}{\alpha_{LB}} 
   \nonumber ,
  \end{align}
  and, combining this with (\ref{eqn:bderrors}), completes the proof.
\end{proof}

The efficient evaluation of (\ref{eqn:certifcate}) follows the approach presented in~\cite[pg52-54]{hesthaven2016},  adapted to complex matrices and with the simplification that we compute a Riesz representation $ \hat{\bm{r}}_i(\omega) \in Y^{(h0)}$ using lowest order elements for computational efficiency. The computations are split in to those performed in the off-line stage and those in the on-line stage as follows.

In the off-line stage, the following $(2M+1) \times (2M+1) $ hermitian matrices are computed
\begin{align}
\mathbf{G}^{(i,j)} = \left ( \textbf{W}^{(i)} \right )^H \textbf{M}_0^{-1} \textbf{W}^{(j)} ,  \nonumber
\end{align}
where, since $\mathbf{G}^{(j,i)} = (\mathbf{G}^{(i,j)})^H$, it follows that, in practice, only the 3 matrices 
$\mathbf{G}^{(1,1)}$, $\mathbf{G}^{(2,2)}$ and $\mathbf{G}^{(3,3)}$
are required for computing the certificates on the diagonal entries of the tensors, and the  further 3 matrices
$\mathbf{G}^{(1,2)}$, $\mathbf{G}^{(1,3)}$ and $\mathbf{G}^{(2,3)}$ are needed
 for the off-diagonal terms. In the above, $(\textbf{M}_0)_{ij} = \left <  \bm{N}^{(i)} , \bm{N}^{(j)} \right >_{L^2(\Omega)}$ are the coefficient of a real symmetric FEM mass matrix for the lowest order, with $ \bm{N}^{(i)}, \bm{N}^{(j)}\in W^{(h0)}$ being typical lowest order basis functions, and 
\begin{align}
\textbf{W}^{(i)}: =\textbf{P}_0^p \left (\begin{array}{ccc} \mathbf{r}^{(1)}( \bm{\theta}_i^{(0)})  & \mathbf{A} ^{(0)} \mathbf{U}^{(M,i)}  &  \mathbf{A} ^{(1)} \mathbf{U}^{(M,i)} \end{array} \right), \nonumber
\end{align}
where $\textbf{P}_0^p$ is a projection matrix of the FEM basis functions from order $p$ to the lowest order $0$,  $ \mathbf{U}^{(M,i)}$ is the $\mathbf{U}^M$ obtained in (\ref{eq:truncSVD}) for the $i$th direction.
The stability constant $\alpha_{LB} =\lambda_{min}\text{min}(1,\frac{\omega}{\omega'})$ is obtained from the smallest eigenvalue of an eigenvalue problem~\cite[pg56]{hesthaven2016}, which, in practice, is only performed once for smallest frequency of interest $\omega'$.

In the on-line stage, we evaluate
\begin{align}
 \| \hat{\bm{r}}_i (\omega) \|_{Y^{(hp)}}^2 =& \left ( (\mathbf{w}^{(i)}(\omega))^H \mathbf{G}^{(i,i)} (\mathbf{w}^{(i)}(\omega)) \right )^{1/2} , \nonumber \\
 \| \hat{\bm{r}}_i (\omega)- \hat{\bm{r}}_j (\omega) \|_{Y^{(hp)}}^2 =& \left ( \| \hat{\bm{r}}_i (\omega) \|_{Y^{(hp)}}^2 +  \| \hat{\bm{r}}_j (\omega) \|_{Y^{(hp)}}^2
 - 2 \text{Re}( 
  (\mathbf{w}^{i}(\omega))^H \mathbf{G}^{(i,j)} (\mathbf{w}^{(j)}(\omega)) )\right )^{1/2} , \nonumber
\end{align}
for each $\omega$ by updating the vector
\begin{equation}
 \mathbf{w}^{(i)}(\omega) =\left ( \begin{array}{c}  \omega  \\- \mathbf{p}^M( {\omega}) \\ -\omega  \mathbf{p}^M({\omega}) \end{array} \right ). \nonumber
\end{equation}
We then  apply~(\ref{eqn:certifcate}) to obtain the output certificates.

\section{Scaling of the MPT under parameter changes}\label{sect:scaling}
Two results that aid the computation of the frequency sweep of an MPT for an object with scaled conductivity and an object with a scaled object size from an already known frequency sweep of the MPT for the same shaped object are stated below.
\begin{lemma} \label{lemma:condscale}
Given the MPT coefficients for an object $ \alpha B $ with material parameters $ \mu_r$  and $ \sigma_*$ at frequency $s\omega$, the  coefficients of the MPT for an object, which has the same $B$, $\alpha $ and $\mu_r$, but with conductivity $s\sigma_*  $, at frequency $\omega$, are given by
\begin{align}
({\mathcal M}[\alpha B, \omega , \mu_r ,s\sigma_*])_{ij} =& ( {\mathcal M}[\alpha B, s\omega , \mu_r ,\sigma_*] )_{ij}, \label{eqn:condscale}
\end{align}
where $( {\mathcal M}[\alpha B, s\omega , \mu_r ,\sigma_*] )_{ij}$ denote the coefficients of the original MPT at frequency $s\omega$.
\end{lemma}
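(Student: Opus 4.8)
The plan is to exploit the fact that the frequency $\omega$ and the conductivity $\sigma_*$ enter the MPT formulae (\ref{eqn:NRI}) \emph{only} through the single combined scalar $\nu := \alpha^2 \omega \mu_0 \sigma_*$, so that the equality (\ref{eqn:condscale}) reduces to the elementary observation that the two parameter choices on its left- and right-hand sides produce the same value of $\nu$.

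First I would record that the transmission problem (\ref{eqn:Theta0}) for $\bm{\theta}_i^{(0)}$, and hence also for $\tilde{\bm{\theta}}_i^{(0)}$, involves only $\tilde{\mu}_r$ and the geometry of $B$ and contains no occurrence of $\omega$ or $\sigma_*$ whatsoever. Consequently $\bm{\theta}_i^{(0)}$ is independent of both, and $\mathcal{N}^0$ in (\ref{eqn:NRI}a), which depends only on $\mu_r$ and $\tilde{\bm{\theta}}_i^{(0)}$, is unchanged under any variation of $\omega$ and $\sigma_*$ at fixed $\mu_r$, $\alpha$ and $B$. Next I would turn to (\ref{eqn:Theta1}), whose only dependence on $\omega$ and $\sigma_*$ is through the factor $\nu$ multiplying $(\bm{\theta}_i^{(0)}+\bm{\theta}_i^{(1)})$; since $\bm{\theta}_i^{(0)}$ is already fixed, the solution depends on the pair $(\omega,\sigma_*)$ only through $\nu$, and I would write $\bm{\theta}_i^{(1)}=\bm{\theta}_i^{(1)}(\nu)$. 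Substituting this into (\ref{eqn:NRI}b) and (\ref{eqn:NRI}c), I note that $\mathcal{R}$ depends on $(\omega,\sigma_*)$ only through $\bm{\theta}_i^{(1)}(\nu)$, while $\mathcal{I}$ depends on them through the explicit prefactor $\nu$ together with $\bm{\theta}_i^{(1)}(\nu)$. Hence each block $\mathcal{N}^0,\mathcal{R},\mathcal{I}$, and therefore $\mathcal{M}=\mathcal{N}^0+\mathcal{R}+\im\mathcal{I}$, is a function of $(\omega,\sigma_*)$ only through $\nu$ once $\alpha$, $\mu_r$ and $B$ are held fixed.

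It then remains to compare the two configurations directly. For the left-hand side of (\ref{eqn:condscale}), with frequency $\omega$ and conductivity $s\sigma_*$, the combined parameter is $\nu_L = \alpha^2 \omega \mu_0 (s\sigma_*) = s\,\alpha^2 \omega \mu_0 \sigma_*$; for the right-hand side, with frequency $s\omega$ and conductivity $\sigma_*$, it is $\nu_R = \alpha^2 (s\omega) \mu_0 \sigma_* = s\,\alpha^2 \omega \mu_0 \sigma_*$. Since $\nu_L=\nu_R$ and the remaining parameters $\alpha$, $\mu_r$, $B$ agree in both cases, the $\nu$-dependence established above yields the equality (\ref{eqn:condscale}) coefficient-by-coefficient for every $i,j$.

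The argument is an invariance (bookkeeping) observation rather than a hard estimate, so I anticipate no serious analytic obstacle. The only point genuinely requiring care is verifying that no stray, un-grouped occurrence of $\omega$ or $\sigma_*$ survives anywhere in (\ref{eqn:Theta0})--(\ref{eqn:NRI}) outside the definition of $\nu$; in particular one must confirm that $\bm{\theta}_i^{(0)}$ carries no hidden frequency dependence, so that the entire $(\omega,\sigma_*)$-dependence of the MPT is genuinely funnelled through the single scalar $\nu$.
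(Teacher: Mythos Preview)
Your proposal is correct and follows exactly the same route as the paper's own proof, which simply observes that (\ref{eqn:NRI}) and (\ref{eqn:Theta1}) depend on $\omega$ and $\sigma_*$ only through $\nu=\alpha^2\sigma_*\mu_0\omega$. You have merely spelled out in more detail the points the paper leaves implicit, namely that $\bm{\theta}_i^{(0)}$ carries no $(\omega,\sigma_*)$-dependence and that every occurrence in $\mathcal{N}^0$, $\mathcal{R}$, $\mathcal{I}$ is funnelled through $\nu$.
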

\begin{proof}
This result immediately follows from (\ref{eqn:NRI}) and (\ref{eqn:Theta1}) since both are written in terms of $\nu=\alpha^2 \sigma_*\mu_0 \omega$.
\end{proof}

\begin{lemma}\label{lemma:alphascale}
Given the MPT coefficients for an object $ \alpha B $ with material parameters $ \mu_r$  and $ \sigma_*$ at frequency $s^2\omega$, the  coefficients of the MPT for an object $s \alpha B $, which is the same as $B$ apart from having size $s\alpha$, at frequency $\omega$, are given by
\begin{align}
({\mathcal M}[s \alpha B, \omega , \mu_r ,\sigma_*])_{ij} =& s^3 ({\mathcal M}[\alpha B, s^2\omega , \mu_r ,\sigma_*] )_{ij},\label{eqn:alphascale}
\end{align}
where $( {\mathcal M}[\alpha B, s^2\omega , \mu_r ,\sigma_*] )_{ij}$ denote the coefficients of the original MPT at frequency $s^2\omega$.
\end{lemma}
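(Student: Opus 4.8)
The plan is to follow the strategy of the proof of Lemma~\ref{lemma:condscale}, exploiting the fact that the transmission problems (\ref{eqn:Theta0}) and (\ref{eqn:Theta1}) are posed on the \emph{unit} object $B$ and its complement rather than on the physical object. Consequently, in the tensor formulae (\ref{eqn:NRI}) the object size enters in only two places: through the explicit $\alpha^3$ prefactors, and through the coupling parameter $\nu := \alpha^2 \omega \mu_0 \sigma_*$. Since the unit shape $B$ and the contrast $\mu_r$ are shared by both objects $s\alpha B$ and $\alpha B$, the two sides of (\ref{eqn:alphascale}) can differ only through these two mechanisms.

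First I would record the algebraic identity that drives the result: for the object $s\alpha B$ at frequency $\omega$ the coupling parameter is $(s\alpha)^2 \omega \mu_0 \sigma_* = s^2 \alpha^2 \omega \mu_0 \sigma_*$, while for the original object $\alpha B$ at frequency $s^2 \omega$ it is $\alpha^2 (s^2 \omega) \mu_0 \sigma_* = s^2 \alpha^2 \omega \mu_0 \sigma_*$; these coincide. Because the problem (\ref{eqn:Theta0}) for $\tilde{\bm{\theta}}_i^{(0)}$ depends only on $\mu_r$ and $B$, and the problem (\ref{eqn:Theta1}) for $\bm{\theta}_i^{(1)}$ depends only on $\mu_r$, $B$, $\nu$ and $\tilde{\bm{\theta}}_i^{(0)}$, equality of $\nu$ forces the field solutions $\tilde{\bm{\theta}}_i^{(0)}$ and $\bm{\theta}_i^{(1)}$ to be identical in the two scenarios.

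Next I would substitute this into each of the three blocks of (\ref{eqn:NRI}). In every block the integrand is built solely from $\tilde{\bm{\theta}}_i^{(0)}$, $\bm{\theta}_i^{(1)}$ and $\nu$, all of which agree, so each integrand is unchanged; the only surviving difference is the prefactor, which becomes $(s\alpha)^3 = s^3 \alpha^3$ for the object $s\alpha B$. This yields $(\mathcal{N}^0[s\alpha B])_{ij} = s^3 (\mathcal{N}^0[\alpha B])_{ij}$ (recalling that $\mathcal{N}^0$ is frequency independent), together with $(\mathcal{R}[s\alpha B,\omega])_{ij} = s^3 (\mathcal{R}[\alpha B, s^2\omega])_{ij}$ and $(\mathcal{I}[s\alpha B,\omega])_{ij} = s^3 (\mathcal{I}[\alpha B, s^2\omega])_{ij}$. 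Summing the three blocks in $\mathcal{M} = \mathcal{N}^0 + \mathcal{R} + \im \mathcal{I}$ then gives (\ref{eqn:alphascale}).

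There is no genuine obstacle, as this is essentially a parameter-matching argument; the one point needing care is the block for $\mathcal{I}$, where $\nu$ appears explicitly inside the integrand as well as implicitly through $\bm{\theta}_i^{(1)}$. One must confirm that matching the single scalar $\nu$ simultaneously aligns both the field solutions and the explicit weight, so that indeed only the $(s\alpha)^3$ prefactor is responsible for the scaling factor $s^3$.
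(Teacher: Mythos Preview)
Your proof is correct, but it takes a genuinely different and more elementary route than the paper's. You read the scaled object as $(s\alpha)B$, i.e.\ keep the unit shape $B$ fixed and replace the size parameter $\alpha$ by $s\alpha$; since the transmission problems (\ref{eqn:Theta0}) and (\ref{eqn:Theta1}) are posed on $B$, and $\alpha$ enters (\ref{eqn:NRI}) only through the $\alpha^3$ prefactors and through $\nu$, the equality $\nu(s\alpha,\omega)=\nu(\alpha,s^2\omega)$ finishes the argument in exact parallel with Lemma~\ref{lemma:condscale}. The paper instead reads $s\alpha B$ as $\alpha(sB)$, keeps $\alpha$ fixed, dilates the unit object from $B$ to $sB$, and must therefore compare solutions of the transmission problems on \emph{different} domains via the scaling identities $\frac{1}{s}\bm{\theta}_{i,sB}^{(0)}(s\bm{\xi}')=\bm{\theta}_{i,B}^{(0)}(\bm{\xi}')$ and $\frac{1}{s}\bm{\theta}_{i,sB}^{(1)}[\nu](s\bm{\xi}')=\bm{\theta}_{i,B}^{(1)}[s^2\nu](\bm{\xi}')$, followed by an explicit change of variables in the tensor integrals (it also works with a different decomposition $\mathcal{M}=\mathcal{N}^0-\mathcal{C}^{\sigma_*}+\mathcal{N}^{\sigma_*}$ rather than (\ref{eqn:NRI})). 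Your argument buys brevity and a clean analogy with Lemma~\ref{lemma:condscale}; the paper's argument buys the slightly stronger observation that the factorisation of the physical object as ``size $\times$ unit shape'' is immaterial, which your approach silently assumes.
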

\begin{proof}
For the case of $\mu_r=1$ this result was proved by Ammari {\em et al.}~\cite{Ammari2015}. We generalise this to $0<\mu_r<\mu_r^{max}<\infty$  as follows: We use the splitting $(\mathcal{M})_{ij}:=(\mathcal{N}^0)_{ij}-(\mathcal{C}^{\sigma_*})_{ij} + (  \mathcal{N}^{\sigma_*})_{ij}$ presented in~\cite{LedgerLionheart2016} and let $\bm{\theta}_{i,B}^{(0)}$ denote the solution to (\ref{eqn:Theta0}).  Then, we find that 
\begin{equation}
\frac{1}{s} \bm{\theta}_{i,sB}^{(0)} (s\bm{\xi}') = \bm{\theta}_{i,B}^{(0)} (\bm{\xi}'), \nonumber
\end{equation}
where $\bm{\theta}_{i,sB}^{(0)} $ is the solution to  (\ref{eqn:Theta0}) with $B$ replaced by $sB$. If $ \bm{\theta}_{i,B}^{(1)}[s^2\nu]$ is the solution to (\ref{eqn:Theta1}) with $\nu$ replaced by $s^2\nu$, then, we find that
\begin{equation}
\frac{1}{s} \bm{\theta}_{i,sB}^{(1)}  [\nu ](s\bm{\xi}') = \bm{\theta}_{i,B}^{(1)}[ s^2\nu] (\bm{\xi}') , \nonumber
\end{equation}
where $\bm{\theta}_{i,sB}^{(1)}  [\nu ]$ is the solution to (\ref{eqn:Theta1}) with $B$ replaced by $sB$. Using the above, the definitions in  Lemma 1 of~\cite{LedgerLionheart2016},  and $\bm{\xi} = s {\bm \xi}'$ we find
\begin{align}
({\mathcal C}^{\sigma_*}[ \alpha (sB), \omega, \mu_r,\sigma_* ] )_{ij} = & -  \frac{\im  \alpha^3\nu }{4}  \int_{sB} \bm{e}_i \cdot 
\left ( \bm{\xi} \times \left (
\bm{\theta}_{i,sB}^{(1)}[\nu]+ \bm{\theta}_{i,sB}^{(0)} \right ) \right ) \dif \bm{\xi} \nonumber\\
= & \frac{\im  s^3 \alpha^3\nu }{4} \int_{B}
 \bm{e}_i \cdot
 \left ( s \bm{\xi}' \times \left (
\bm{\theta}_{i,sB}^{(1)}[\nu](s \bm{\xi}')+ \bm{\theta}_{i,sB}^{(0)} (s \bm{\xi}') \right ) \right ) \dif \bm{\xi}'\nonumber\\
= & \frac{\im  s^3 \alpha^3(s^2\nu) }{4} \int_{B}
 \bm{e}_i \cdot
 \left ( \bm{\xi}' \times \left (
\bm{\theta}_{i,B}^{(1)}[s^2\nu] + \bm{\theta}_{i,B}^{(0)}  \right ) \right ) \dif \bm{\xi}'
= s^3 ({\mathcal C}^{\sigma_*} [ \alpha B, s^2\omega, \mu_r,\sigma_*] )_{ij} , \nonumber
\end{align}
\begin{align}
({\mathcal N}^0[ \alpha (sB), \mu_r] )_{ij} = &  \frac{ \alpha^3}{2} [\tilde{\mu}^{-1} ]_\Gamma \int_{sB} \bm{e}_i \cdot \nabla_\xi \times 
\bm{\theta}_{i,sB}^{(0)}  \dif \bm{\xi} \nonumber\\
= & \frac{ s^3 \alpha^3}{2} [\tilde{\mu}^{-1} ]_\Gamma \int_{B} \bm{e}_i \cdot\frac{1}{s} \nabla_{\xi'} \times (s
\bm{\theta}_{i,B}^{(0)} ) \dif \bm{\xi}' = s^3 ({\mathcal N}^0[ \alpha B, \mu_r] )_{ij} ,  \nonumber
\end{align}
\begin{align}
({\mathcal N}^{\sigma_*}[ \alpha (sB), \omega, \mu_r,\sigma_* ] )_{ij} = &  \frac{ \alpha^3}{2} [\tilde{\mu}^{-1} ]_\Gamma \int_{sB} \bm{e}_i \cdot \nabla_\xi \times 
\bm{\theta}_{i,sB}^{(1)}[\nu]  \dif \bm{\xi} \nonumber\\
= & \frac{ s^3 \alpha^3}{2} [\tilde{\mu}^{-1} ]_\Gamma \int_{B} \bm{e}_i \cdot\frac{1}{s} \nabla_{\xi'} \times (s
\bm{\theta}_{i,B}^{(1)} [s^2 \nu] ) \dif \bm{\xi}' = s^3 ({\mathcal N}^{\sigma_*} [ \alpha B, s^2\omega, \mu_r,\sigma_*] )_{ij} , \nonumber
\end{align} 
and the quoted result immediately follows.
\end{proof}

\section{Numerical examples of PODP} \label{sect:examplespodp}
The PODP algorithm has been implemented in the Python interface to the high order finite element solver \texttt{NGSolve} package led by the group of Sch\"oberl~\cite{NGSolve,zaglmayrphd,netgendet} available at \texttt{https://ngsolve.org}. The snapshots are computed by solving (\ref{Weak0}) and (\ref{bilinear}) using \texttt{NGSolve} and their $\bm{H}(\text{curl})$ conforming tetrahedral finite element basis functions of order $p$ on meshes of spacing $h$~\cite{SchoberlZaglmayr2005}. Following the solution of (\ref{eqn:Linear}), and the application of (\ref{FE deconstruct}), the coefficients of ${\mathcal M}[\alpha B, \omega]$~\footnote{
In the following, when presenting numerical results for the PODP, we frequently choose to drop the superscript PODP  on  $\mathcal{M}[\alpha B,\omega]$  $\mathcal{R}[\alpha B,\omega]$, $\mathcal{I}[\alpha B,\omega]$ and $\mathcal{N}^0[B]$, introduced in Section~\ref{sect:outputcert}, for brevity of presentation where no confusion arises.
Also, we will return to using the notation ${\mathcal M}[\alpha B, \omega,\mu_r,\sigma_*]$, which illustrates the full parameter dependence, in Section~\ref{sect:examplesscale} when considering scaling of conductivity and object size.}
 follow by simple post-processing of (\ref{eqn:NRI}). If desired, PODP output certificates can also be efficiently computed using the approach described in Section~\ref{sect:outputcert}.
  The Python scripts for the computations presented can be accessed at \\\texttt{https://github.com/BAWilson94/MPT-Calculator}.

\subsection{Conducting permeable sphere}\label{sect:ConductingSphere}
We begin with the case where $B_{\alpha}=\alpha B$  is a permeable conducting sphere of radius $\alpha=0.01$ m and  $B$ is the unit sphere centred at the origin. The sphere is chosen to have a relative permeability $\mu_r=1.5$ and conductivity $\sigma_*=5.96\times10^6$ S/m. To produce the snapshots of the full order model, we set $\Omega$ to be a ball 100 times the radius of $B$ and discretize it using a mesh of 26\,385 unstructured tetrahedral elements, refined towards the object, and a polynomial order of $p=3$. We have chosen this discretization since it has already been found to produce an accurate representation of $\mathcal{M}[\alpha B, \omega]$ for $10^2 <\omega<10^8$ rad/s by comparing with exact solution of the MPT for a sphere~\cite{wait1951,LedgerLionheart2018}. Indeed, provided that the geometry discretisation error is under control, performing $p$-refinement of the full order model solution results in exponential convergence to the true solution~\cite{LedgerLionheart2015}. 

We follow two different schemes for choosing frequencies $\omega$ for generating the solution vectors $\mathbf{q}(\omega)$ required for $\mathbf{D}$ in (\ref{D}).
Firstly, we consider linearly spaced frequencies $\omega_{min}\le \omega_{n}\le \omega_{max}$,  $n=1,2,\ldots,N$, where, as in Section~\ref{POD}, $N$ is the number of snapshots, and
 denote this choice of samples by ``Lin" in the results. Secondly, we consider logarithmically spaced frequencies $\omega_{min}\le \omega_{n}\le \omega_{max}$ 
and  denote this regime by ``Log" in the results.
 
Considering both linearly and logarithmically spaced frequencies  with $\omega_{min}= 1\times 10^2 \text{ rad/s}$, $\omega_{max}= 1\times 10^8 \text{ rad/s}$ and $N=9,13,17$, in turn, to generate the snapshots, the  application of an SVD to  $\mathbf{D}$ in (\ref{eq:SVD}) leads to the results shown in Figure~\ref{fig:Singular} where the values have been scaled by $\sigma_1$ and are strictly decreasing.  We observe that ``Log'' case produces singular values $\sigma_i/\sigma_1$, which tend to $0$ with increasing $i$, while 
the ``Lin'' case produces $\sigma_i/\sigma_1$, which tend to a finite constant with increasing $i$. Also shown is the tolerance $TOL=1\times 10^{-3}$,  i.e. we define $M$ such that $\sigma_M/\sigma_1\leq TOL<\sigma_{M+1}/\sigma_1$ and create the matrices $\mathbf{U}^M$, $\mathbf{\Sigma}^M$ and $(\mathbf{V}^*)^M$ by taking the first $M$ columns of $\mathbf{U}$, $M$ rows of $\mathbf{V}^*$ and first $M$ rows and columns of $\mathbf{\Sigma}$.
 
 \begin{figure}[H]
\begin{center}
$$\begin{array}{cc}
\includegraphics[width=0.5\textwidth, keepaspectratio]{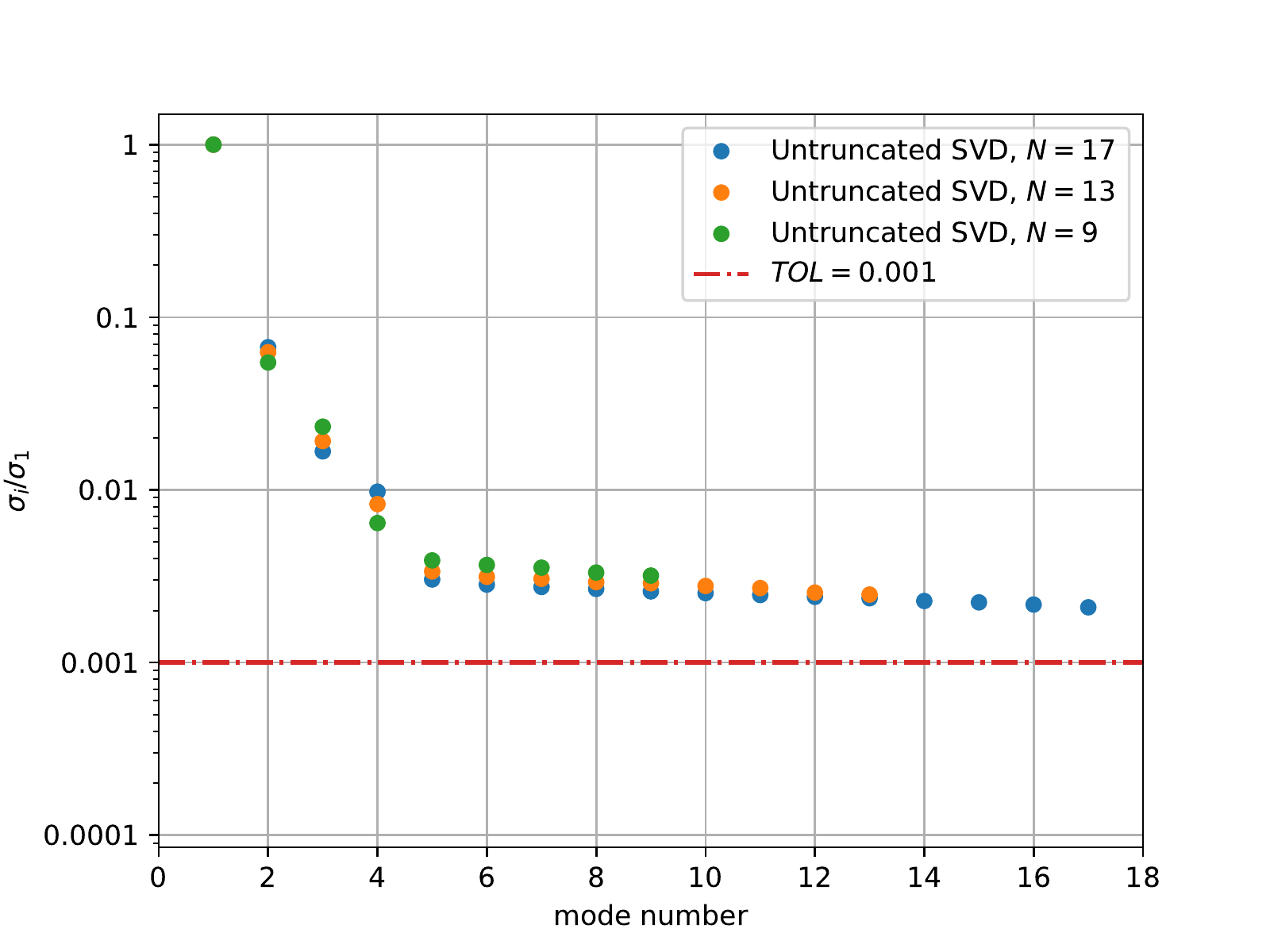} &
\includegraphics[width=0.5\textwidth, keepaspectratio]{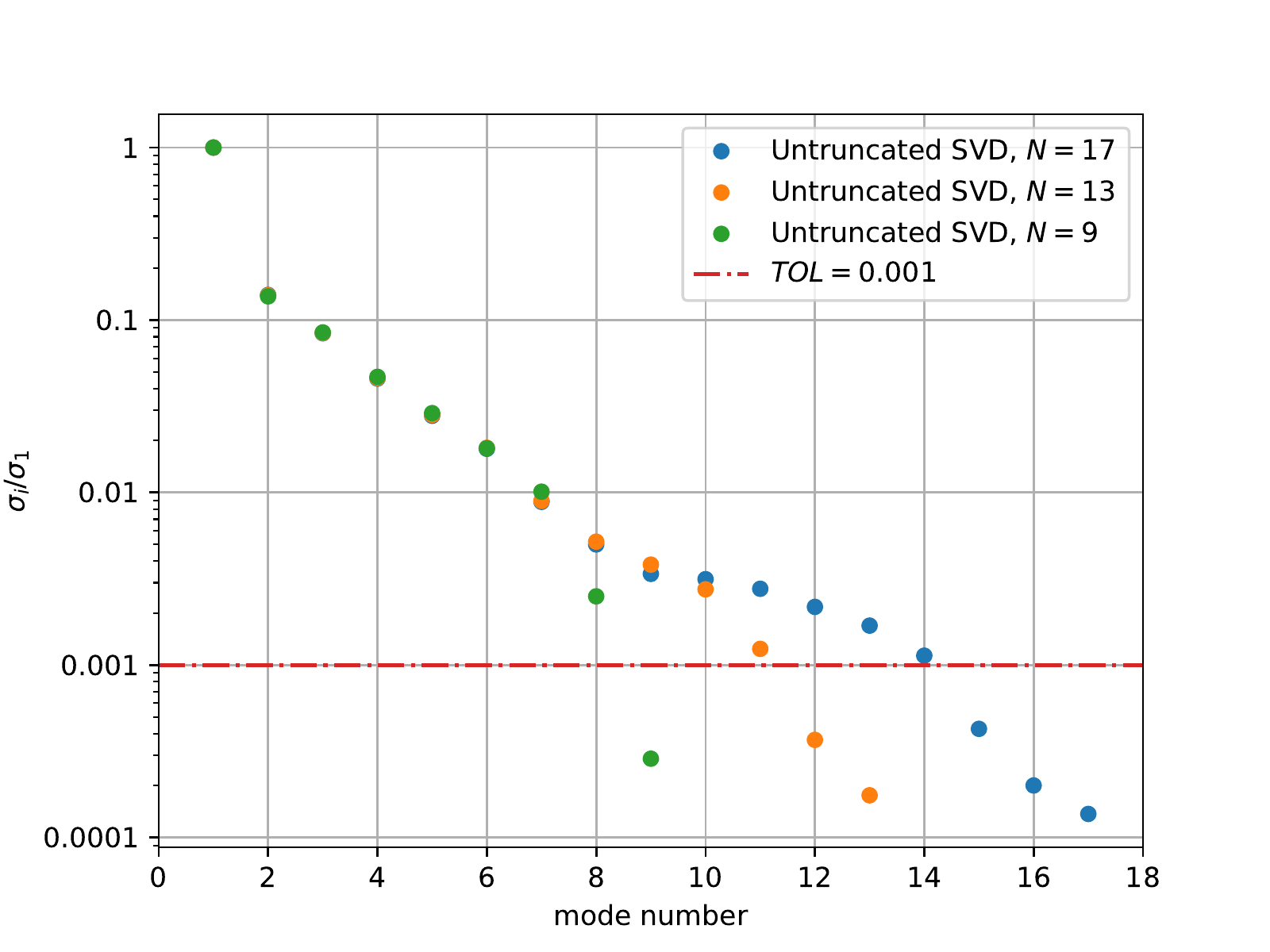} \\
\textrm{\footnotesize{(b) Linearly spaced snapshots}} &
 \textrm{\footnotesize{(a) Logarithmically spaced snapshots}}  
\end{array}$$
\caption{Sphere with $\mu_r=1.5$, $\sigma_*=5.96\times10^6$ S/m, $\alpha=0.01$ m: PODP applied to the computation of $\mathcal{M}[\alpha B, \omega]$  showing $\sigma_i/\sigma_1$ for $(a)$ linearly spaced  snapshots and  $(b)$  logarithmically spaced snapshots.}
\label{fig:Singular}
\end{center}
\end{figure}

The superior performance of logarithmically spaced frequency snapshots over those linearly spaced is illustrated in~Figure \ref{fig:LogvsLin}~$(a)$ where the variation of the condition number $\kappa(\mathbf{A}^M(\omega))$  with $\omega$ for the frequency range and snapshots presented in Figure~\ref{fig:Singular} is shown. Included in Figure~\ref{fig:LogvsLin}~$(b)$ is the
corresponding error measure  $|e(\Lambda_i(\omega))|:= |\Lambda_i^{exact}(\omega)-\Lambda_i^{PODP} (\omega)| /  |\Lambda_i^{exact}(\omega)| $ with $\omega$, where $\Lambda_i(\omega)=\lambda_i(\mathcal{R}[\alpha B, \omega]+{\mathcal N}^0[\alpha B])+\textrm{i}\lambda_i(\mathcal{I}[\alpha B ,\omega])$, $\lambda_i(\cdot)$ indicates the $i$th eigenvalue.
 Note that, since the results for $i=1,2,3$ are identical on this scale, only $i=1$ is shown.  From the results shown in this figure, we see that, with the exception of $N=17$, the logarithmically spaced frequency snapshots results in a lower condition number compared to the linear ones and that all the logarithmically spaced snapshots result in a smaller error.

\begin{figure}[H]
$$\begin{array}{cc}
\includegraphics[width=0.5\textwidth, keepaspectratio]{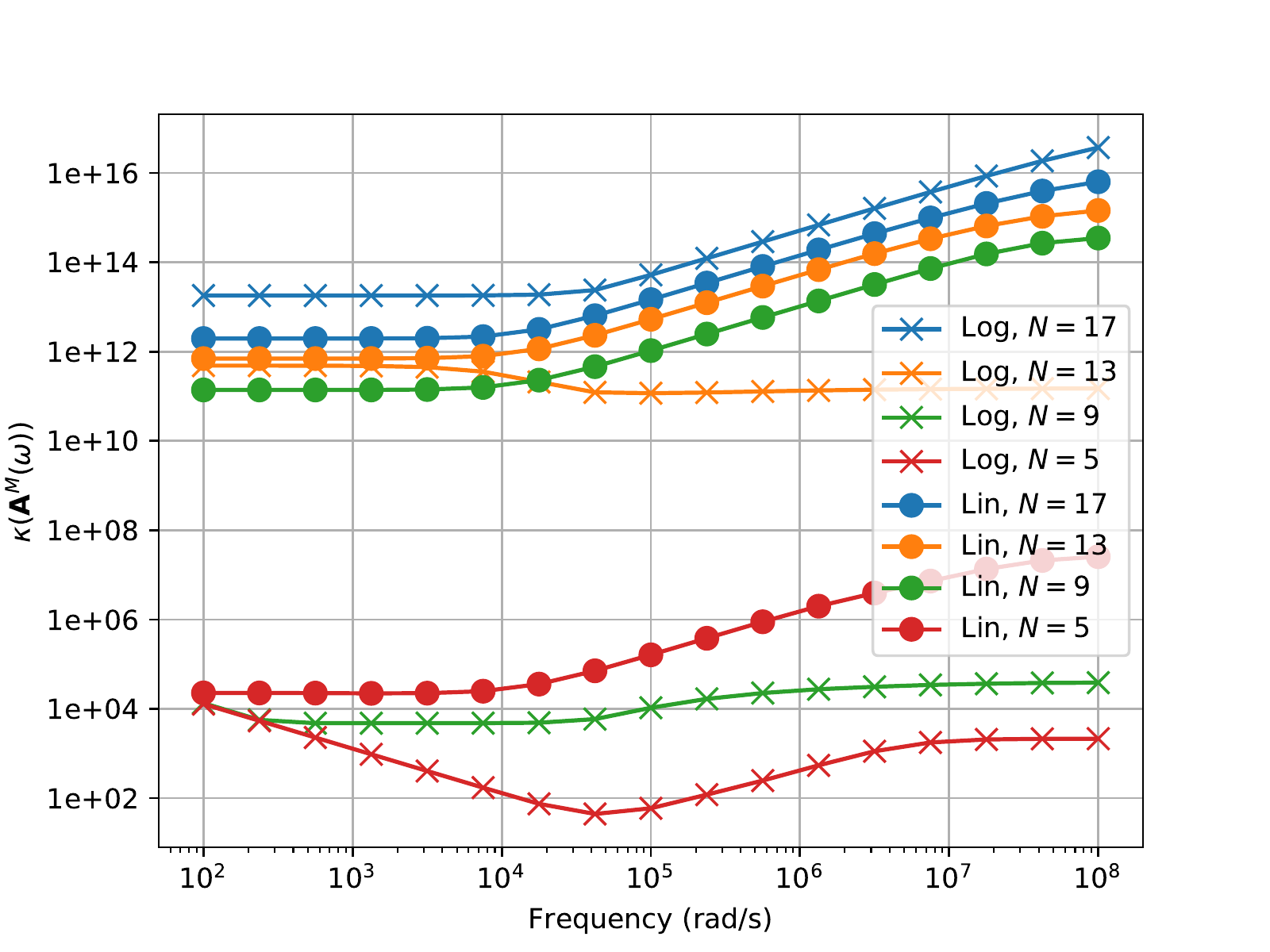} & \includegraphics[width=0.5\textwidth, keepaspectratio]{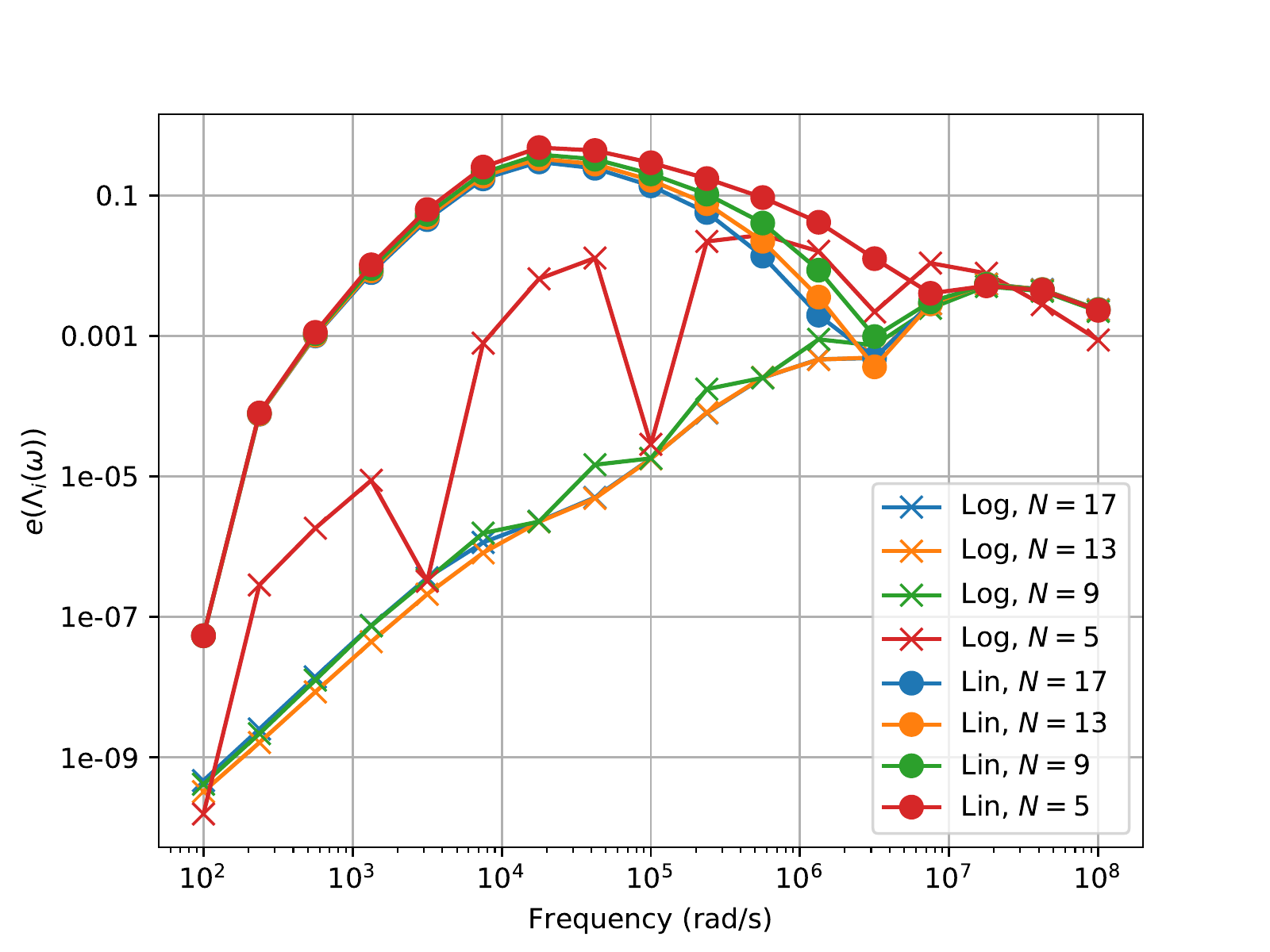}\\
\textrm{\footnotesize{(a) $\kappa(\mathbf{A}^M(\omega))$}} & \textrm{\footnotesize{(b) $e(\Lambda_i(\omega))$}}
\end{array}$$
\caption{Sphere with $\mu_r=1.5$, $\sigma_*=5.96\times10^6$ S/m, $\alpha=0.01$ m: PODP applied to the computation of $\mathcal{M}[\alpha B, \omega]$  showing
 $(a)$ Variation of $\kappa(\mathbf{A}^M(\omega))$ with $\omega$ for linearly and logarithmically spaced snapshots $(b)$ Variation of $e(\Lambda_i(\omega))$ with $\omega$ for the same snapshots.}
\label{fig:LogvsLin}
\end{figure}

Further tests reveal that the accuracy of the PODP using $N=9,13,17$ and  logarithmically spaced snapshots remains similar to that shown in Figure~\ref{fig:LogvsLin}~$(b)$ for $TOL\le 1\times 10^{-3}$ for this problem. We complete the discussion of the sphere by showing a comparison of $\lambda_i(\mathcal{R}[\alpha B, \omega])$ and $\lambda_i(\mathcal{I}[\alpha B, \omega])$, each with $\omega$, for the full order model, PODP using $N=9$ and the exact solution  in Figure~\ref{Best}. Again, the results for $i=1,2,3$ are identical and, hence, only $i=1$ is shown. In this figure, we observe excellent agreement between PODP, the full order model solution and exact solution.

\begin{figure}[H]
$$\begin{array}{cc}
\includegraphics[width=0.5\textwidth, keepaspectratio]{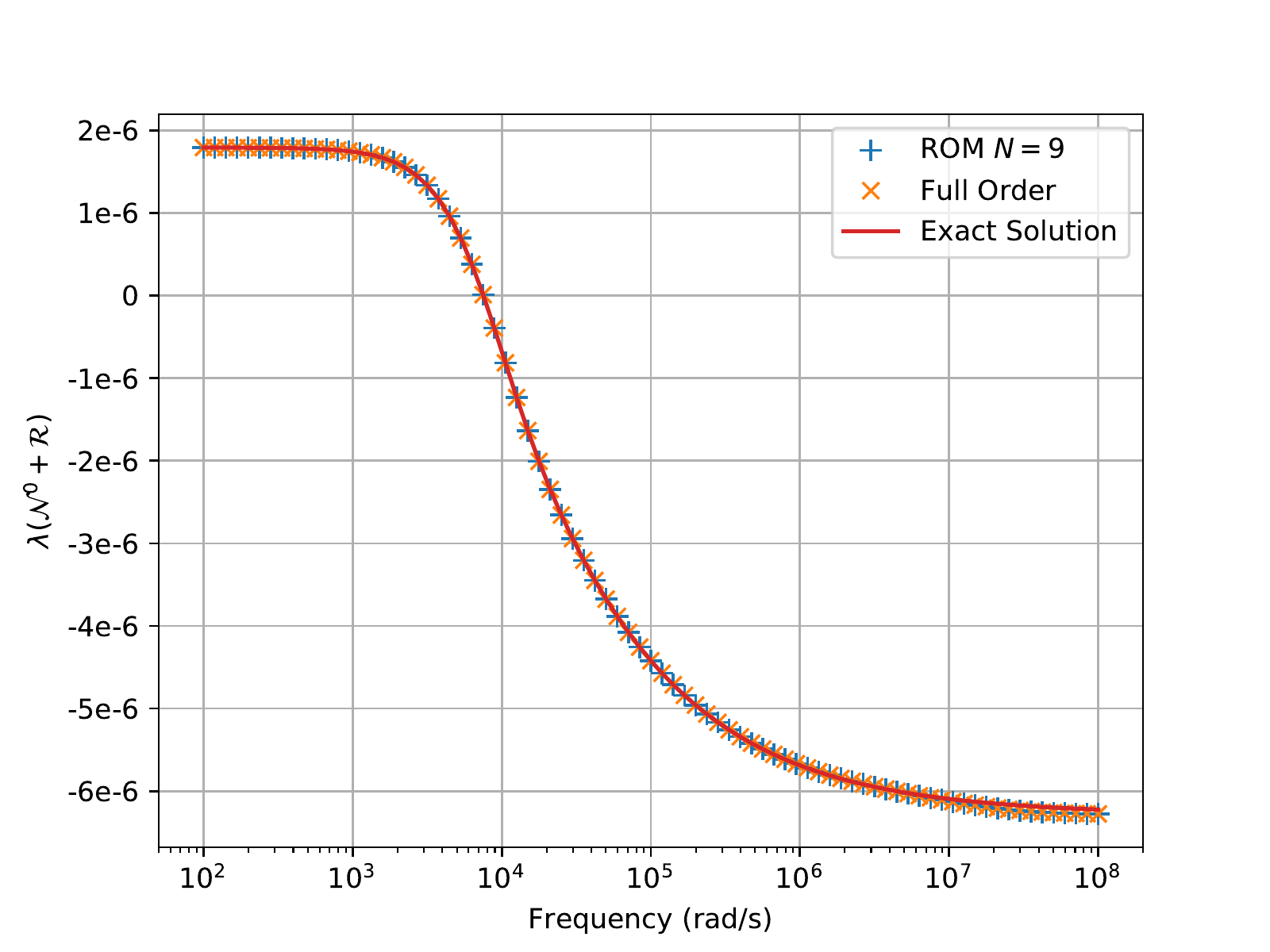} & \includegraphics[width=0.5\textwidth, keepaspectratio]{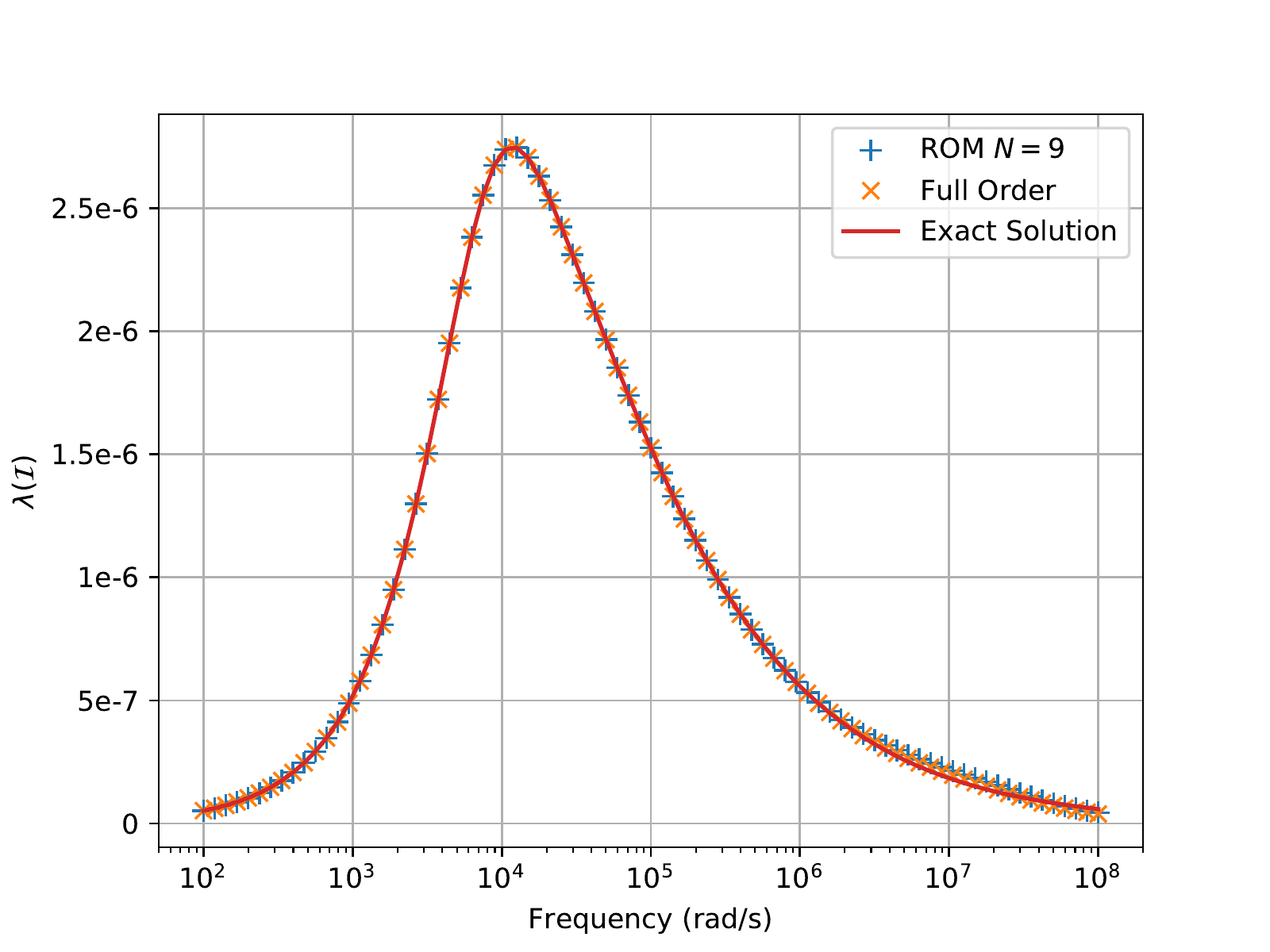}\\
\textrm{\footnotesize{(a) $\lambda_i(\mathcal{N}^0[\alpha B]+\mathcal{R}[\alpha B, \omega])$}} & \textrm{\footnotesize{(b) $\lambda_i(\mathcal{I}[\alpha B, \omega]$)}}
\end{array}$$
\caption{Sphere with $\mu_r=1.5$, $\sigma_*=5.96\times10^6$ S/m, $\alpha=0.01$ m: PODP applied to the computation of $\mathcal{M}[\alpha B, \omega]$ with $N=9$ and $TOL=1\times 10^{-4}$ showing $(a)$ $\lambda_i(\mathcal{N}^0[\alpha B]+\mathcal{R}[\alpha B, \omega])$  and $(b)$ $\lambda_i(\mathcal{I}[\alpha B, \omega])$ each with $\omega$.}
\label{Best}
\end{figure}
In Figure~\ref{fig:sphereerror}, we show the output certificates $(\mathcal{R}^{PODP}[\alpha B, \omega]+{\mathcal N}^{0, PODP}[\alpha B ])_{ii}\pm (\Delta[\omega])_{ii}$ (summation of  repeated indices is not implied) and $(\mathcal{I}^{PODP}[\alpha B, \omega])_{ii}\pm (\Delta[\omega])_{ii}$, each with $\omega$,
obtained by applying the technique described in Section~\ref{sect:outputcert} for the case where $i=1$ and with $N=17, 21$ and $TOL=1\times10^{-6}$. Similar certificates can be obtained for the other tensor coefficients. We observe that certificates are indistinguishable from the 
 the MPT coefficients obtained with PODP
 for low frequencies in both cases and the certificates rapidly tend to the MPT coefficients for all $\omega$ as $N$ is increased.
Note that  $TOL=1\times10^{-6}$ is chosen as larger tolerances lead to larger certificates, however, this reduction in tolerance does not substantially affect the computational cost of the ROM.
  Although the effectivity indices $(\Delta [\omega])_{11}/|(\mathcal{R}[\alpha B, \omega]- \mathcal{R}^{PODP}[\alpha B, \omega] )_{11}|$
  and $(\Delta [ \omega])_{11}/|(\mathcal{I}[\alpha B, \omega]- \mathcal{I}^{PODP}[\alpha B, \omega] )_{11}|$) of the PODP with respect to the full order model
are clearly larger at higher frequencies, we emphasise that they are computed at negligible additional cost, they converge rapidly to 
the MPT coefficients obtained with PODP
 as $N$ is increased and give credibility in the PODP solution without the need of performing additional full order model solutions to validate the ROM.

\begin{figure}[H]
$$\begin{array}{cc}
\includegraphics[width=0.5\textwidth, keepaspectratio]{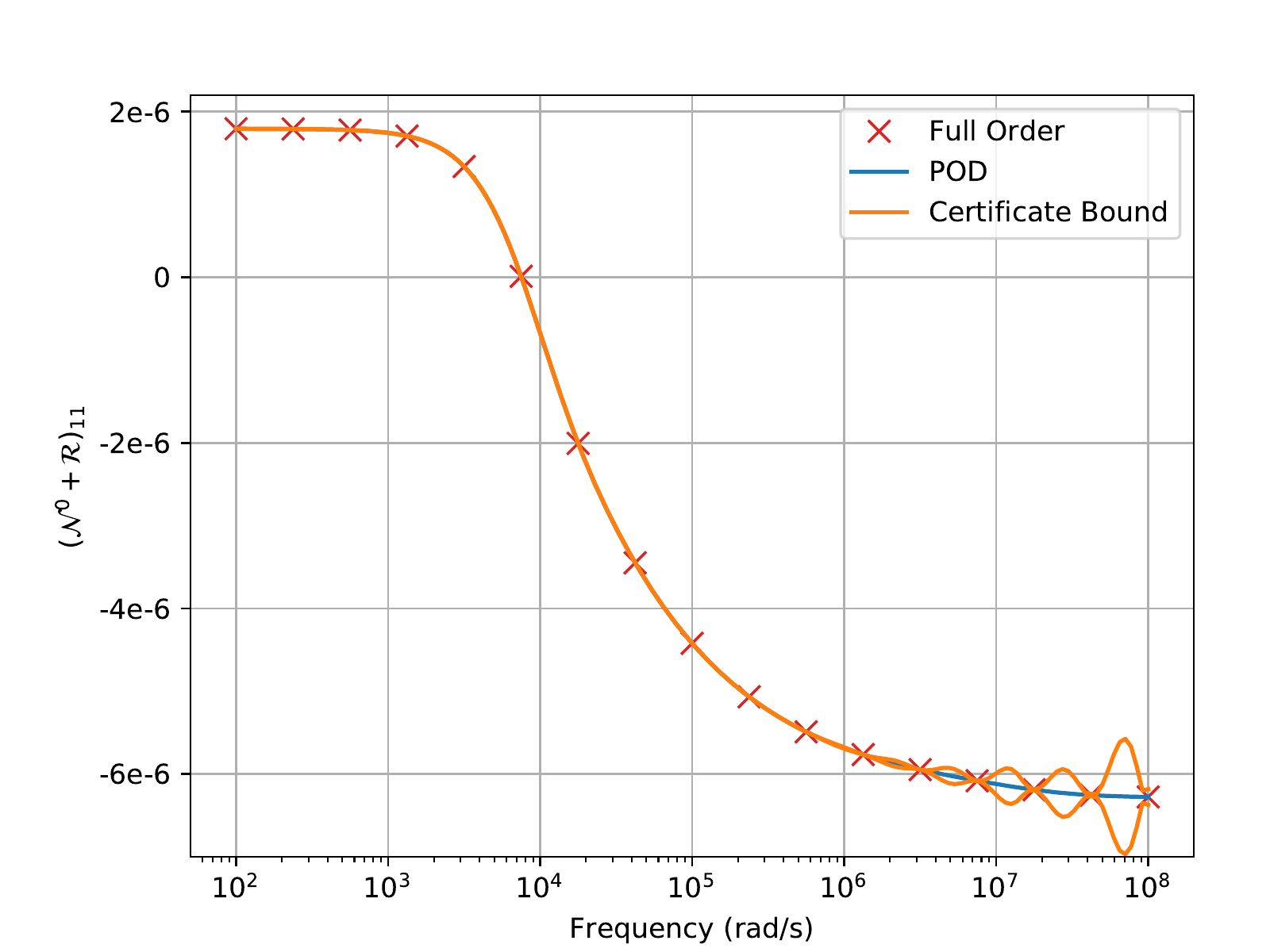} &
\includegraphics[width=0.5\textwidth, keepaspectratio]{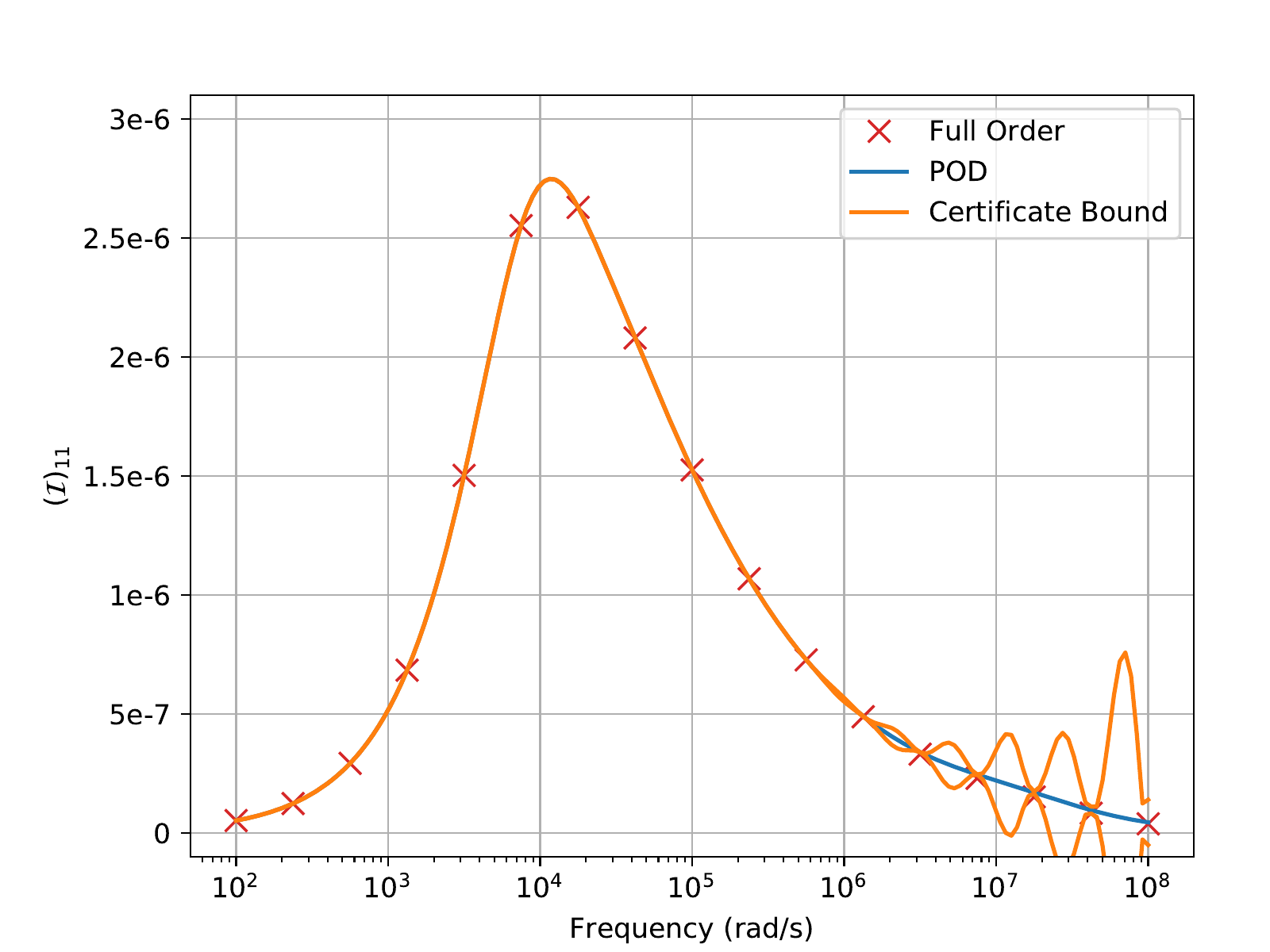} \\
\textrm{\footnotesize{(a) $(\mathcal{N}^0[\alpha B]+\mathcal{R}[\alpha B, \omega])_{11}$, $N=17$}} &
\textrm{\footnotesize{(b) $(\mathcal{I}[\alpha B, \omega])_{11}$, $N=17$} } \\
\includegraphics[width=0.5\textwidth, keepaspectratio]{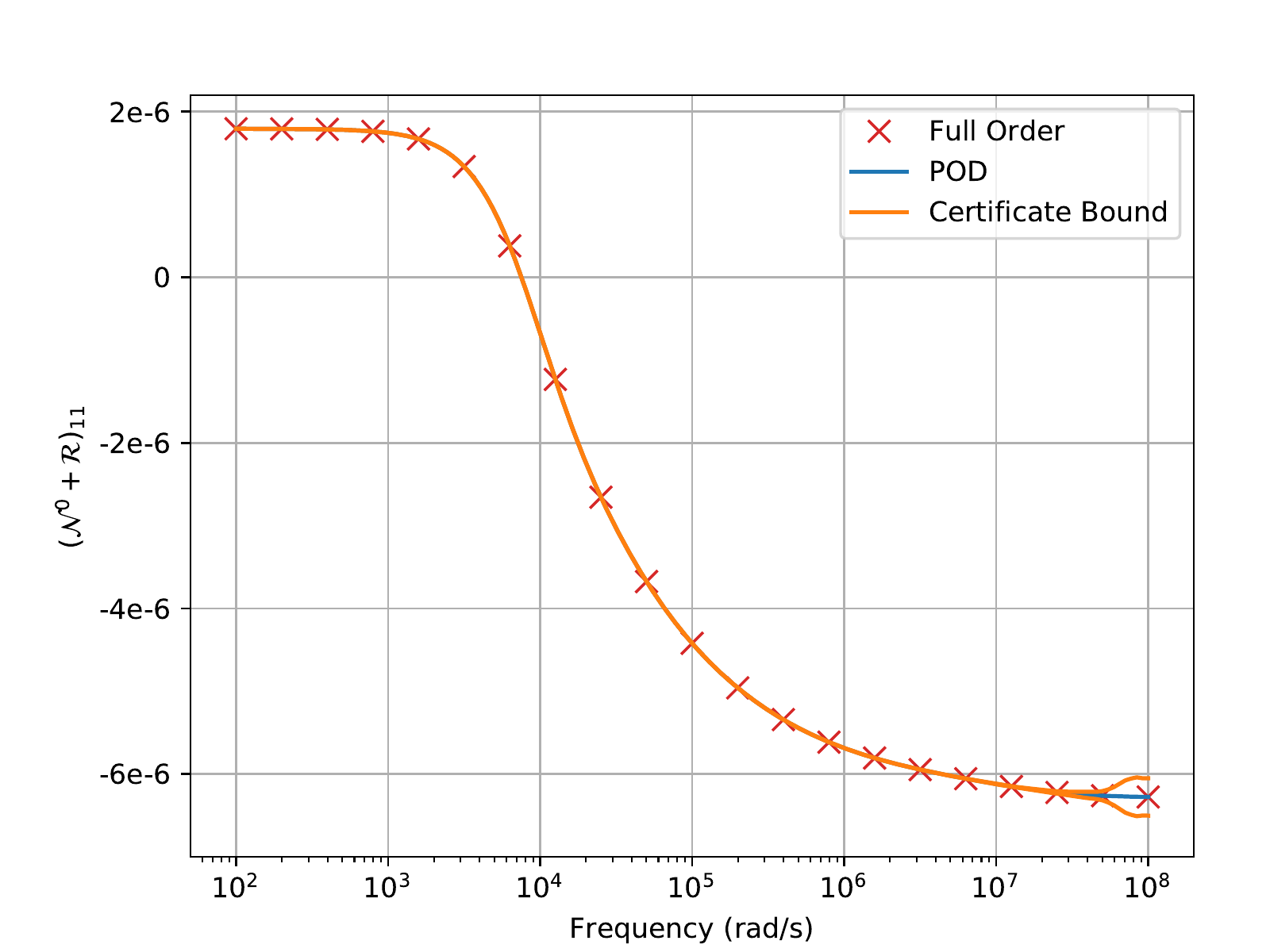} &
\includegraphics[width=0.5\textwidth, keepaspectratio]{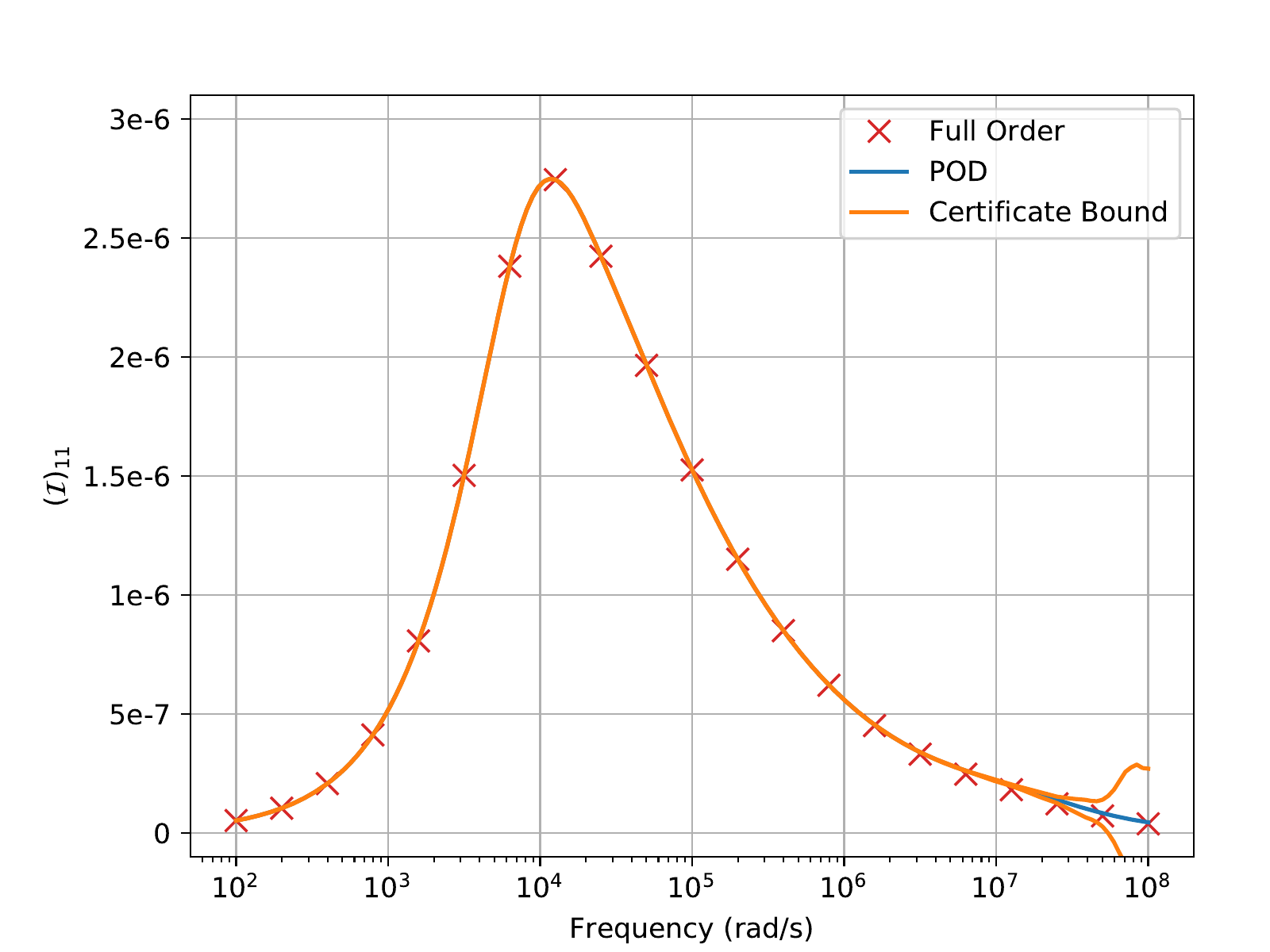}\\
\textrm{\footnotesize{(c) $(\mathcal{N}^0[\alpha B]+\mathcal{R}[\alpha B, \omega])_{11}$, $N=21$}} &
\textrm{\footnotesize{(d) $(\mathcal{I}[\alpha B, \omega])_{11}$, $N=21$} } 
\end{array}$$
\caption{Sphere with $\mu_r=1.5$, $\sigma_*=5.96\times10^6$ S/m, $\alpha=0.01$ m: PODP applied to the computation of $\mathcal{M}[\alpha B, \omega]$ with $TOL=1\times 10^{-6}$ showing the PODP solution, full order model solutions and output certificates $(\cdot ) \pm (\Delta [ \omega])_{11}$ for  $(a)$ $ (\mathcal{N}^0[\alpha B]+\mathcal{R}[\alpha B, \omega])_{11}$ using $N=17$,
$(b)$ $ (\mathcal{I}[\alpha B, \omega])_{11}$ using $N=17$,
  $(c)$ 
$(\mathcal{N}^0[\alpha B]+\mathcal{R}[\alpha B, \omega])_{11}$ using $N=21$ and $(d)$ 
$(\mathcal{I}[\alpha B, \omega])_{11}$ using $N=21$, each with $\omega$.}
\label{fig:sphereerror}
\end{figure}

The computational  speed-ups offered by using the PODP compared to a frequency sweep performed with the full order model are shown in Figure~\ref{fig:Speedup} where  $N=9,13,17$ and  logarithmically spaced snapshots are chosen with  $\omega_{min}= 1\times 10^2 \text{ rad/s}$, $\omega_{max}= 1\times 10^8 \text{ rad/s}$, as before. For the comparison, we vary the number of output points $N_0$ produced in a frequency sweep and measure the time taken to produce each of these frequency sweeps using a 2.9 GHz quad core Intel i5 processor and also show the percentage speed up offered by each of these PODP sweeps. Also shown is the break down of the computational time for the offline and online stages of the PODP for the case where $N=13$. Note, in particular, that the computational cost increases very slowly with $N_0$ and that the additional cost involved in computing the output certificates is negligible. The breakdown of computational costs for other $N$ is similar.

\begin{figure}[H]
$$\begin{array}{cc}
\includegraphics[width=0.5\textwidth, keepaspectratio]{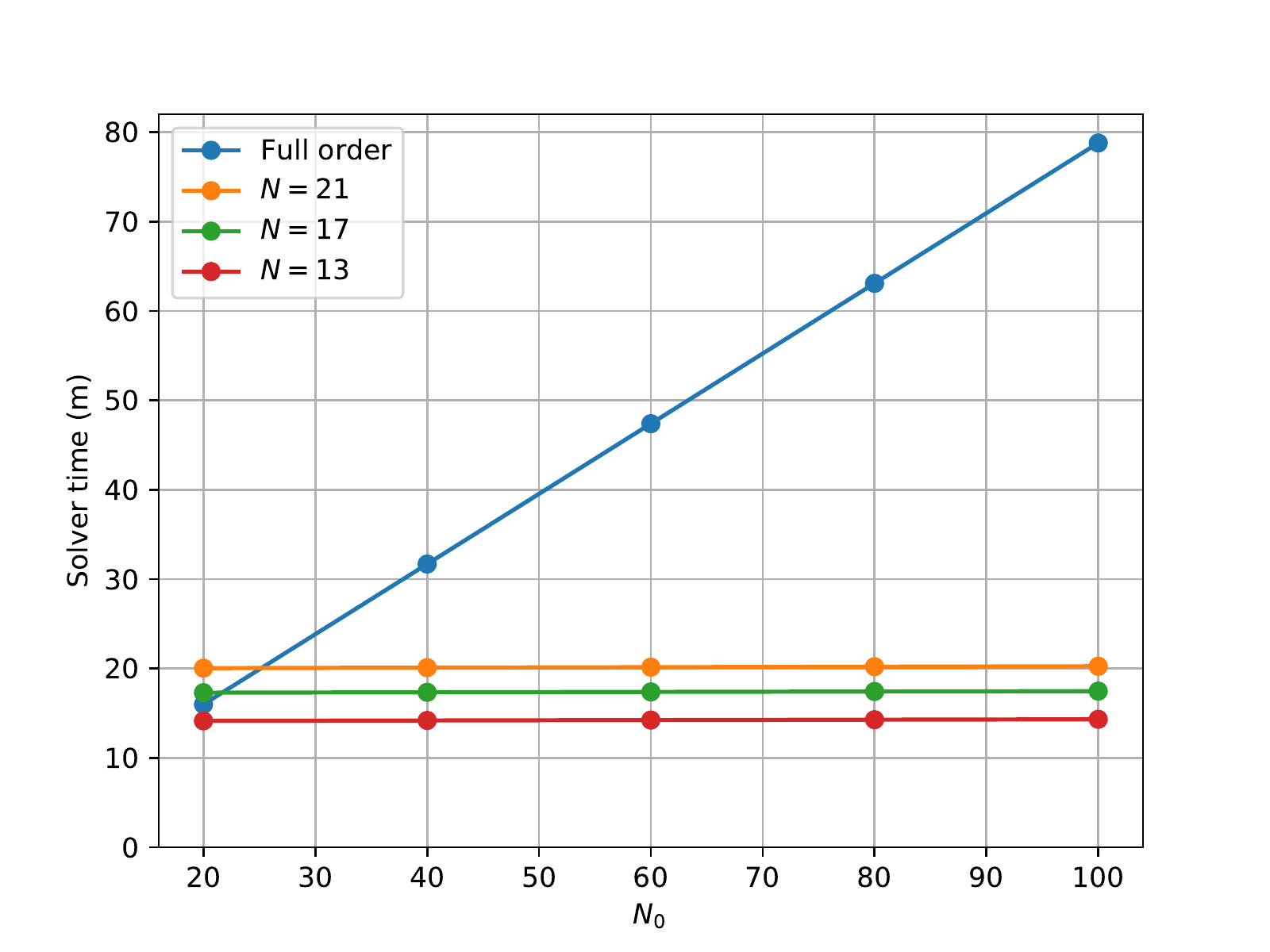} & \includegraphics[width=0.5\textwidth, keepaspectratio]{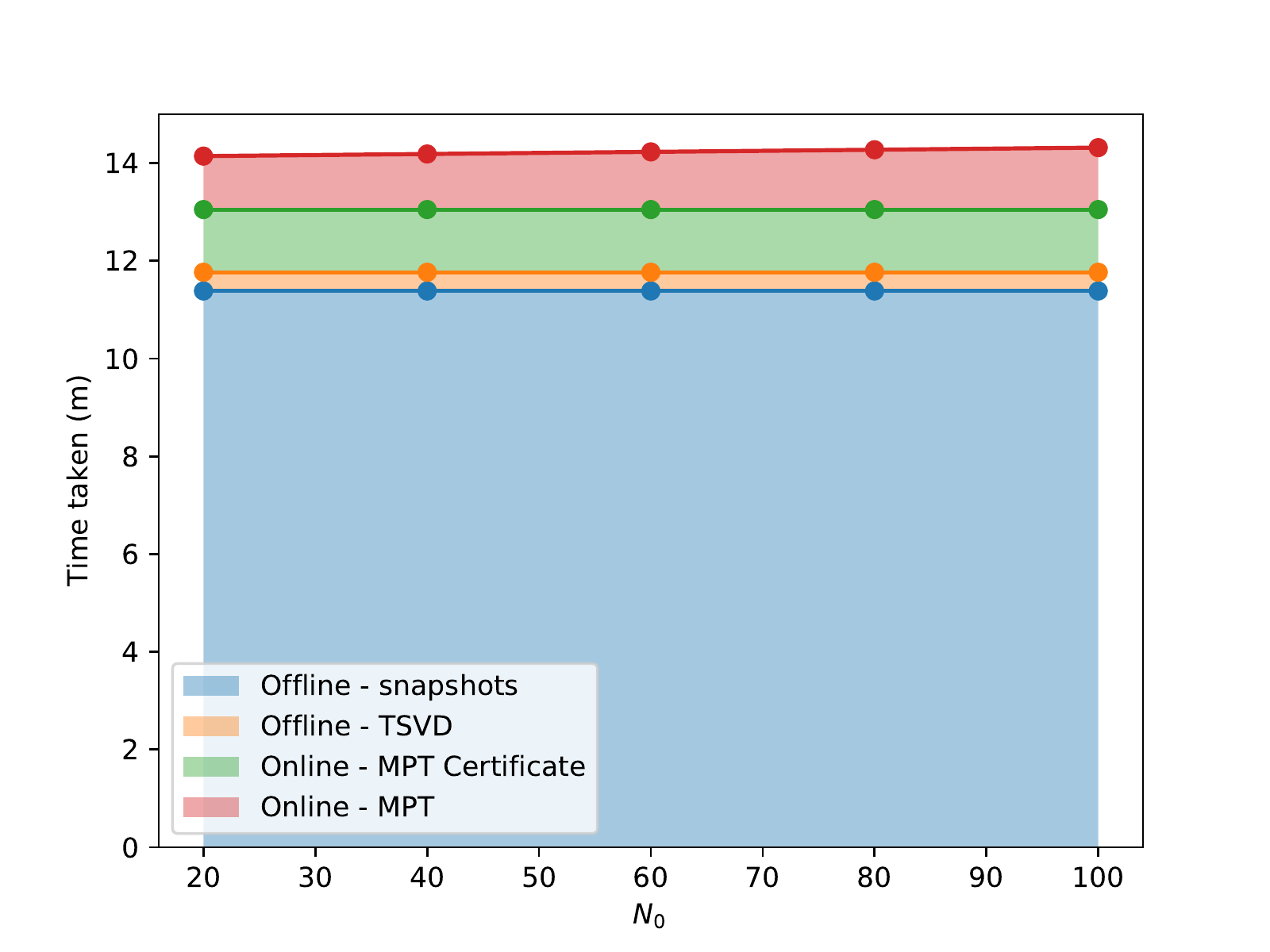}\\
\textrm{\footnotesize{(a) Sweep Time}} & \textrm{\footnotesize{(b) Break Down of PODP Timings}}
\end{array}$$
\caption{Sphere with $\mu_r=1.5$, $\sigma_*=5.96\times10^6$ S/m, $\alpha=0.01$ m: PODP applied to the computation of $\mathcal{M}[\alpha B, \omega]$ with $N=13,17,21$ and $TOL=1\times 10^{-6}$ showing, for different numbers of outputs $N_0$, $(a)$ sweep computational time compared with full order and $(b)$ a typical break down of the offline and online 
computational times for $N=13$.}
\label{fig:Speedup}
\end{figure}

\subsection{Conducing permeable torus}\label{sect:condtorus}
Next, we consider $B_{\alpha}= \alpha B$ to be a torus where $B$ has major and minor radii  $a=2$  and $b=1$, respectively, $\alpha=0.01$ m and the object is permeable and  conducting with
 $\mu_r=1.5$, $\sigma_*=5\times10^5$ S/m.
 The object is centred at the origin so that it has rotational symmetry around the $\bm{e}_1$ axis and hence ${\mathcal M}[\alpha B, \omega]$ has independent coefficients $({\mathcal M}[\alpha B, \omega])_{11}$ and $({\mathcal M}[\alpha B, \omega])_{22}=({\mathcal M}[\alpha B, \omega])_{33}$, and thus $\mathcal{N}^0[\alpha B]$, $\mathcal{R}[\alpha B, \omega]$, $\mathcal{I}[\alpha B, \omega]$ each have $2$ independent eigenvalues. To compute the full order model, we 
 set $\Omega$ to be a sphere of radius 100, centred at the origin, such that it contains $B$ and
 discretise it by a mesh of  26142 unstructured tetrahedral elements, refined towards the object, and a polynomial order of $p=3$. This discretisation has already been found to produce an accurate representation of $\mathcal{M}[\alpha B, \omega]$ for the frequency range with $\omega_{min}=1 \times10^2 {\textrm{ rad}}{/ \textrm{s}}$ and $\omega_{max} = 1 \times10^8 {\textrm{ rad}} / {\textrm{s}}$ with the full order model. 
 
 The reduced order model is constructed using $N=13$ snapshots at logarithmically spaced frequencies with $TOL=1 \times 10^{-4} $. Figure~\ref{fig:Torus} shows the results for $\lambda_i(\mathcal{N}^0[\alpha B]+\mathcal{R}[\alpha B, \omega])$ and $\lambda_i(\mathcal{I}[\alpha B, \omega])$, each with $\omega$, for both the full order model and the PODP. The agreement is excellent in both cases.
\begin{figure}[H]
$$\begin{array}{cc}
\includegraphics[width=0.5\textwidth, keepaspectratio]{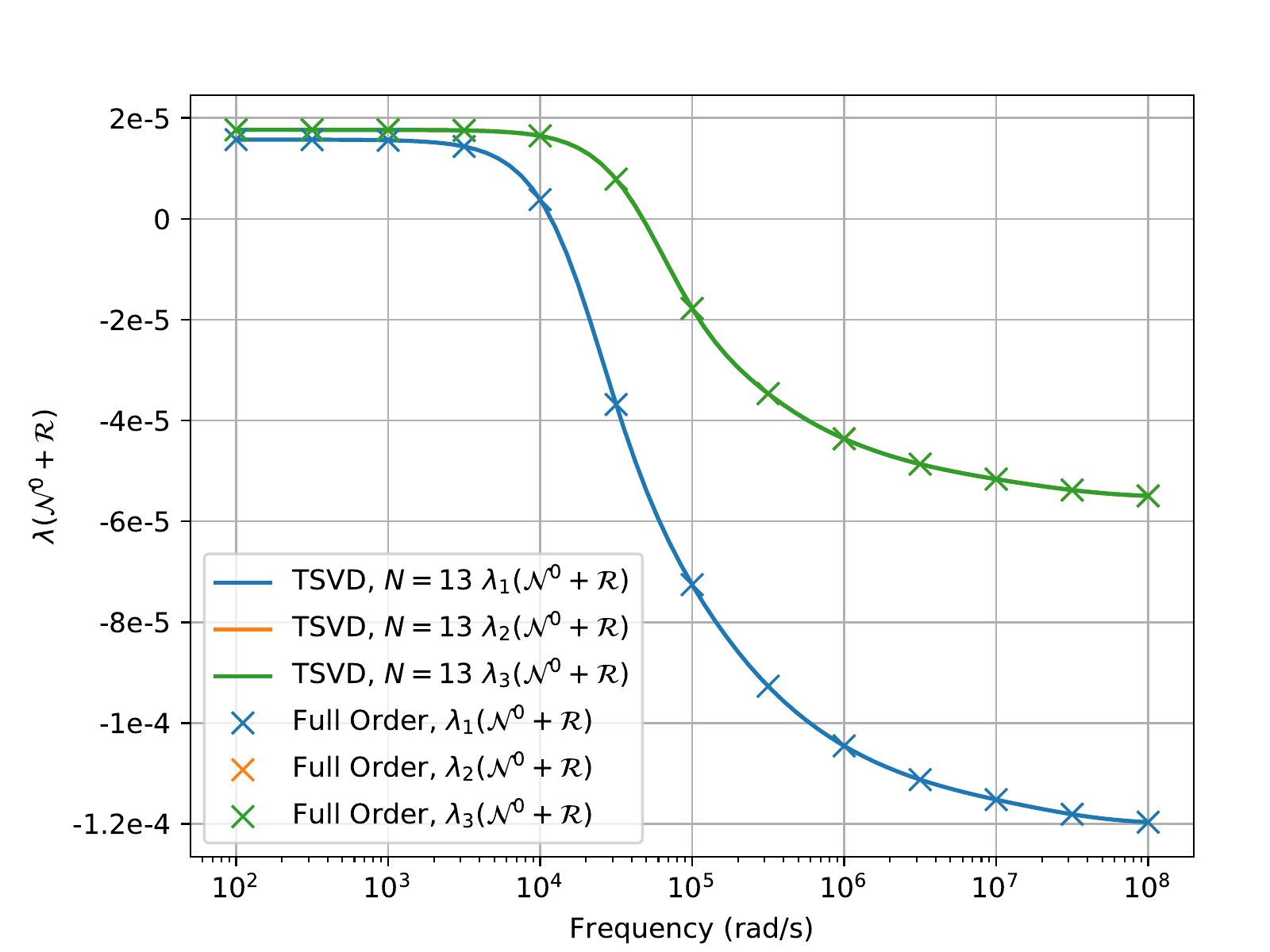} & \includegraphics[width=0.5\textwidth, keepaspectratio]{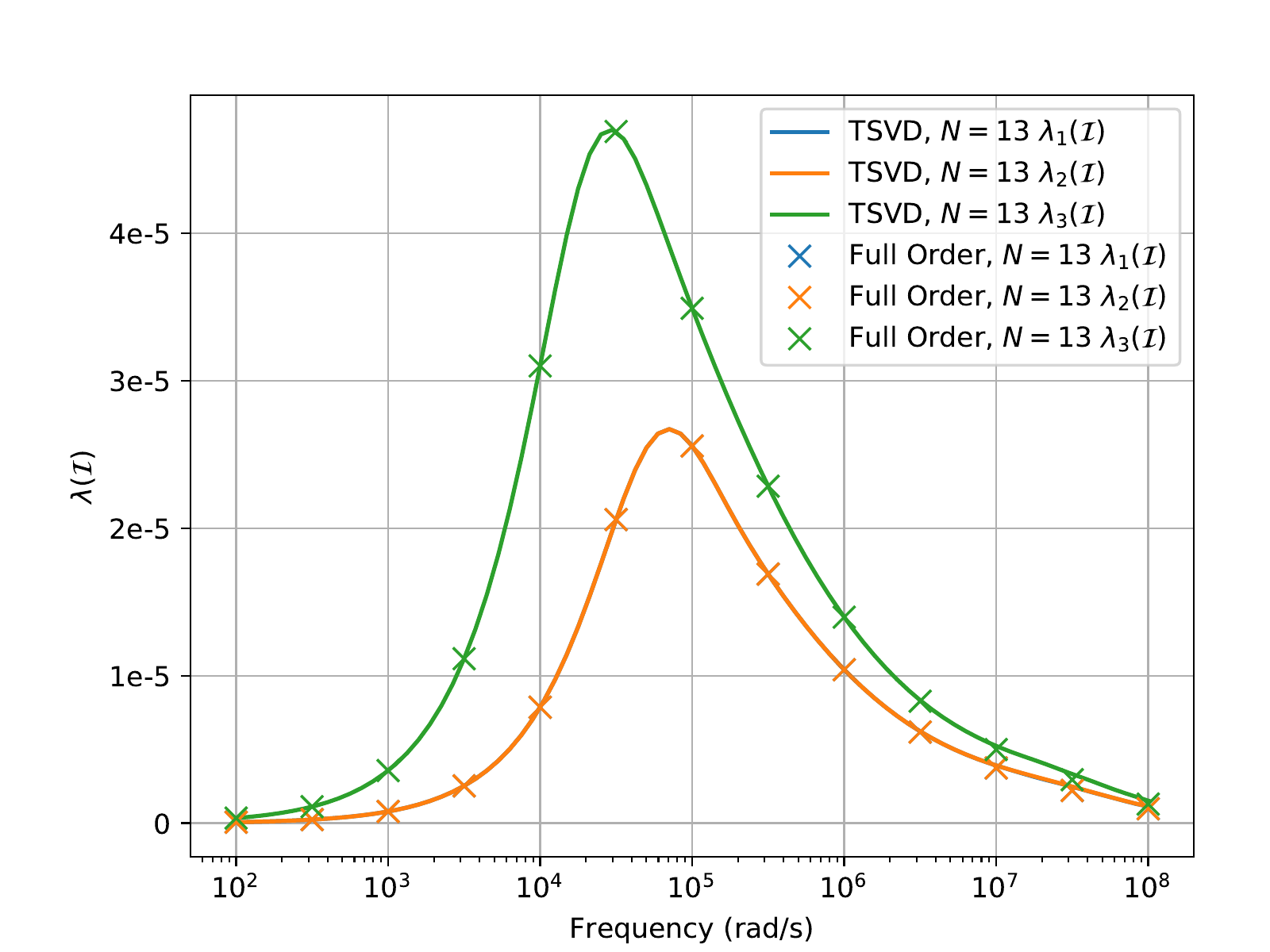}\\
\textrm{\footnotesize{(a) $\lambda_i(\mathcal{N}^0[\alpha B]+\mathcal{R}[\alpha B, \omega])$}} & \textrm{\footnotesize{(b) $\lambda_i(\mathcal{I}[\alpha B, \omega]$)}}
\end{array}$$
\caption{Torus with major and minor radii of $a=2$ and $b=1$, respectively, and $\mu_r=1.5$, $\sigma_*=5\times10^5$ S/m, $\alpha=0.01$ m: 
PODP applied to the computation of $\mathcal{M}[\alpha B, \omega]$ $N=13$ and  $TOL=1 \times 10^{-4} $
 showing $(a)$ $\lambda_i(\mathcal{N}^0[\alpha B]+\mathcal{R}[\alpha B, \omega])$  and $(b)$ $\lambda_i(\mathcal{I}[\alpha B, \omega])$, each with $\omega$. 
 }
\label{fig:Torus}
\end{figure}

In Figure~\ref{fig:ErrorTorus} we show the output certificates $(\mathcal{R}^{PODP}[\alpha B, \omega]+{\mathcal N}^{0, PODP}[\alpha B ])_{ii}\pm (\Delta[\omega])_{ii}$ (no summation over repeated indices implied)  and $(\mathcal{I}^{PODP}[\alpha B, \omega])_{ii}\pm (\Delta[\omega])_{ii}$, each with $\omega$,
obtained by applying the technique described in Section~\ref{sect:outputcert} for the case where $N=17$ and $TOL=1\times10^{-6}$.  Note that we increased the number of snapshots from $N=13$ to $N=17$ and have reduced the tolerance to ensure tight certificates bounds. 

\begin{figure}[H]
$$\begin{array}{cc}
\includegraphics[width=0.5\textwidth, keepaspectratio]{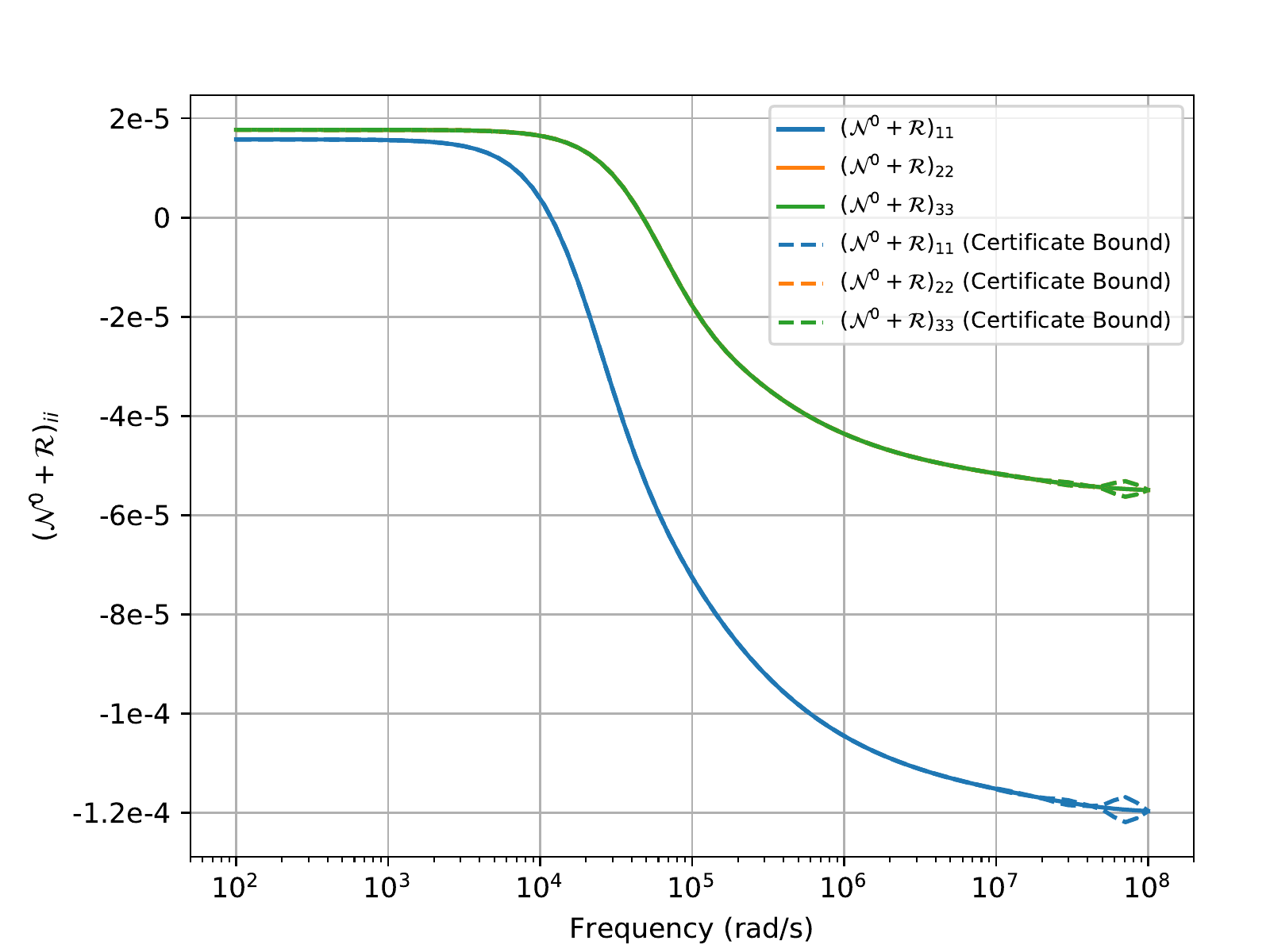} &
\includegraphics[width=0.5\textwidth, keepaspectratio]{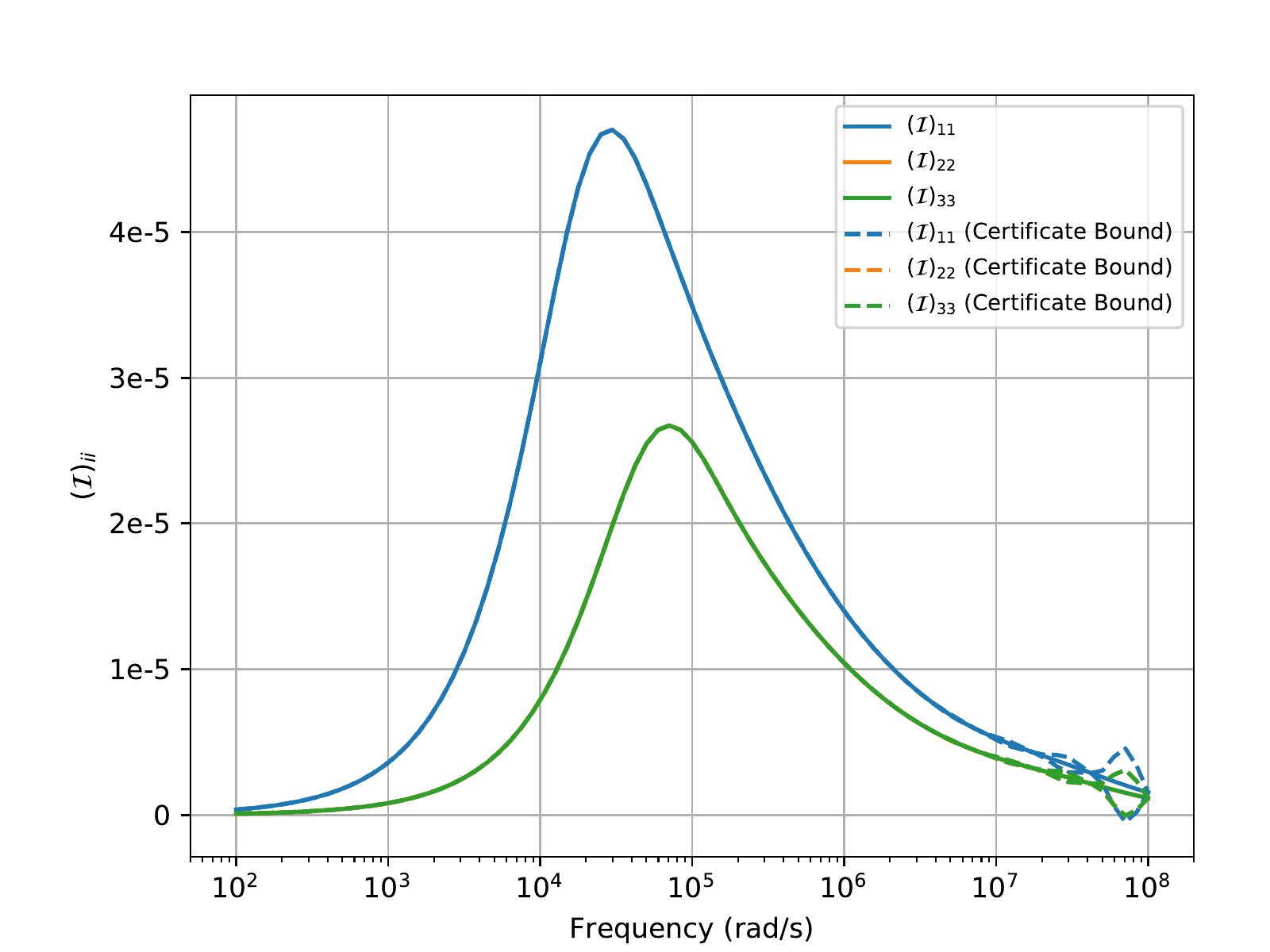} \\
\textrm{\footnotesize{(a) $(\mathcal{N}^0[\alpha B]+\mathcal{R}[\alpha B, \omega] )_{ii}$}} & \textrm{\footnotesize{(b) $(\mathcal{I}[\alpha B, \omega])_{ii} $}} 
\end{array}$$
\caption{
Torus with $\mu_r=1.5$,  $\sigma_*=5\times10^5$ S/m, $\alpha=0.01$ m: 
PODP applied to the computation of $\mathcal{M}[\alpha B, \omega]$ with $TOL=1\times 10^{-6}$ and $N=17$ showing the PODP solution and output certificates $(\cdot ) \pm( \Delta [\omega])_{ii}$ for  $(a)$ $ (\mathcal{N}^0[\alpha B]+\mathcal{R}[\alpha B, \omega])_{ii}$,
$(b)$ $ (\mathcal{I}[\alpha B, \omega])_{ii}$,
each with $\omega$. 
}
\label{fig:ErrorTorus}
\end{figure}

\subsection{Conducting permeable tetrahedron}\label{sect:tetra}
The third object considered is where $B_\alpha = \alpha B$ is a conducting permeable tetrahedron. The vertices of the tetrahedron $B$ are chosen to be at the locations 
\begin{equation}
v_1=\begin{pmatrix} 0\\0\\0\end{pmatrix},\  v_2=\begin{pmatrix} 7\\0\\0\end{pmatrix},\  v_3=\begin{pmatrix} 5.5\\4.6\\0\end{pmatrix}\ \textrm{and}\ v_4=\begin{pmatrix} 3.3\\2\\5\end{pmatrix}, \nonumber
\end{equation}
the object size is $\alpha = 0.01$ m and the tetrahedron is permeable  and conducting with $\mu_r=2$ and $\sigma_*=5.96\times10^6$ S/m. The object does not have rotational or reflectional symmetries and, hence, ${\mathcal M}[\alpha B,\omega]$ has $6$ independent coefficients and, thus,  $\mathcal{N}^0[\alpha B]$, $\mathcal{R}[\alpha B, \omega]$, $\mathcal{I}[\alpha B, \omega]$ each have $3$ independent eigenvalues. To compute the full order model, we  set $\Omega$ to be a cube with sides of length 200 centred about the origin and discretise it with a mesh of 21427 unstructured tetrahedral elements, refined towards the object, and a polynomial order of $p=3$. This discretisation has already been found to produce an accurate representation of $\mathcal{M}[\alpha B, \omega] $ for the frequency range with  $\omega_{min} = 1 \times 10^2 {\textrm{ rad}}  / { \textrm{s}}$ and $\omega_{max} = 1 \times 10^8 {\textrm{ rad}} / {\textrm{s}}$.

The reduced order model is constructed using $N=13 $ snapshots  at logarithmically spaced frequencies with $TOL=1 \times 10^{-4} $. Figure~\ref{fig:Tetra} shows the results for $\lambda_i(\mathcal{N}^0[\alpha B]+\mathcal{R}[\alpha B, \omega])$ and $\lambda_i(\mathcal{I}[\alpha B, \omega])$, each with $\omega$, for both the full order model and the PODP. The agreement is excellent in both cases.

\begin{figure}[H]
$$\begin{array}{cc}
\includegraphics[width=0.5\textwidth, keepaspectratio]{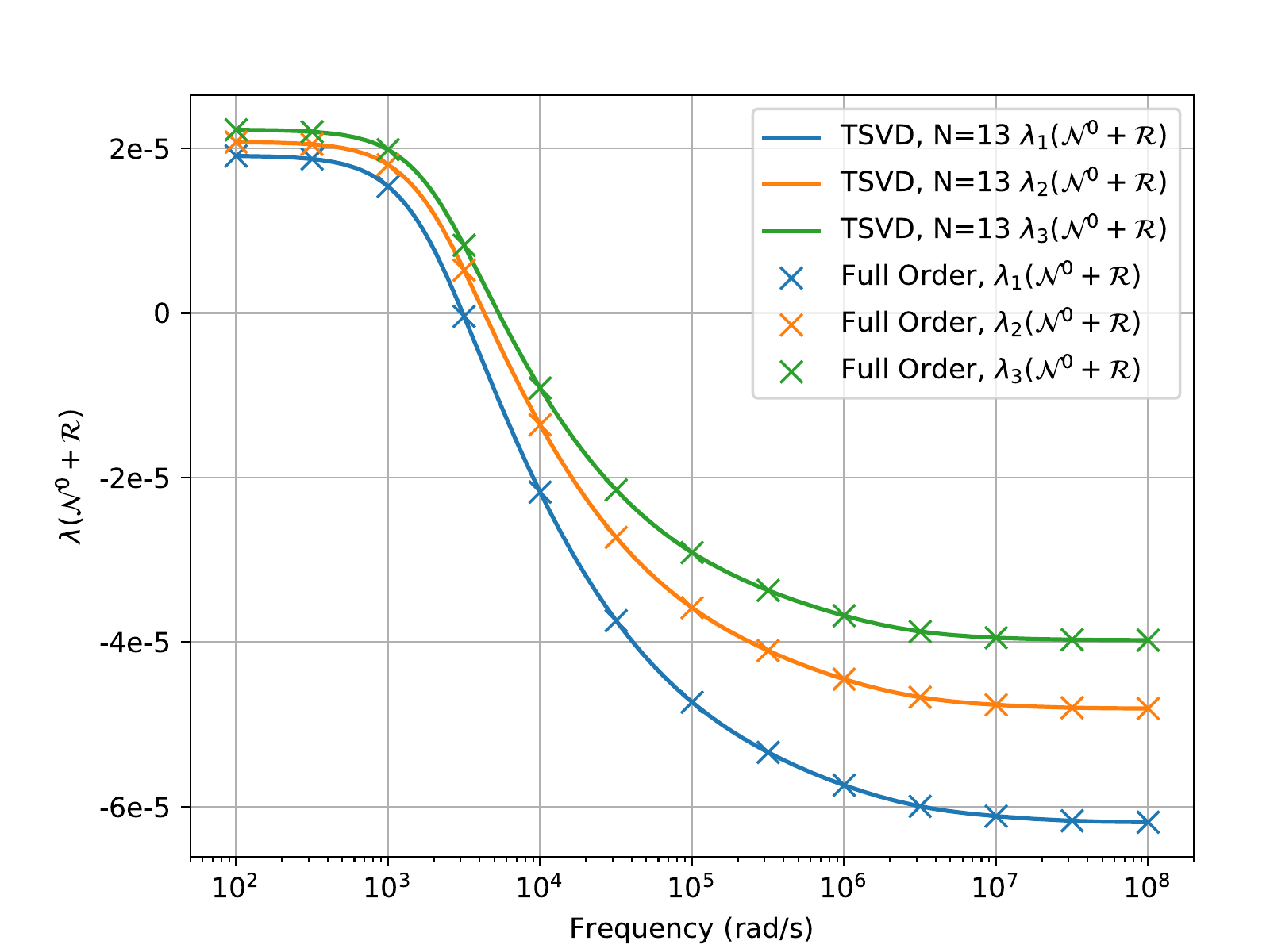} & \includegraphics[width=0.5\textwidth, keepaspectratio]{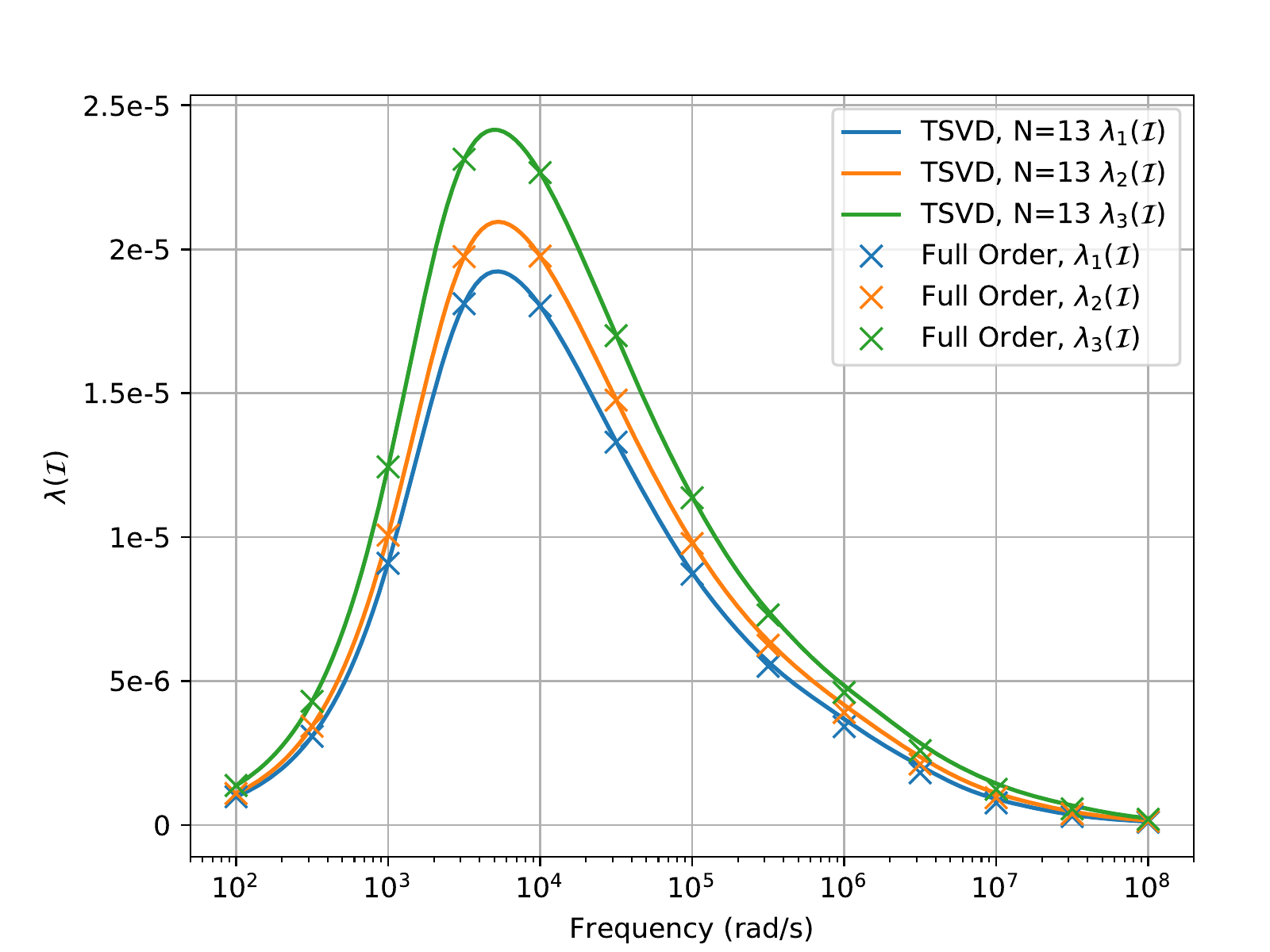}\\
\textrm{\footnotesize{(a) $\lambda_i(\mathcal{N}^0[\alpha B]+\mathcal{R}[\alpha B, \omega] )$}} & \textrm{\footnotesize{(b) $\lambda_i(\mathcal{I}[\alpha B, \omega] $)}}
\end{array}$$
\caption{
Irregular tetrahedron with $\mu_r=2$,  $\sigma_*=5.96\times10^6$ S/m, $\alpha=0.01$ m: 
PODP applied to the computation of $\mathcal{M}[\alpha B, \omega]$ $N=13$ and  $TOL=1 \times 10^{-4} $
 showing $(a)$ $\lambda_i(\mathcal{N}^0[\alpha B]+\mathcal{R}[\alpha B, \omega])$  and $(b)$ $\lambda_i(\mathcal{I}[\alpha B, \omega])$, each with $\omega$.}
\label{fig:Tetra}
\end{figure}

In Figure~\ref{fig:ErrorTetra} we show the output certificates $(\mathcal{R}^{PODP}[\alpha B, \omega]+{\mathcal N}^{0, PODP}[\alpha B ])_{ij}\pm (\Delta[\omega])_{ij}$  and \\ $(\mathcal{I}^{PODP}[\alpha B, \omega])_{ij}\pm (\Delta[\omega])_{ij}$, both with $\omega$, for $i=j$ and $i\ne j$
obtained by applying the technique described in Section~\ref{sect:outputcert} for the case where $N=21$ and $TOL=1\times10^{-6}$.  Once again, we 
 increased the number of snapshots from $N=13$ to $N=21$ and have reduced the tolerance to ensure tight certificates bounds, except at large frequencies.

\begin{figure}[H]
$$\begin{array}{cc}
\includegraphics[width=0.5\textwidth, keepaspectratio]{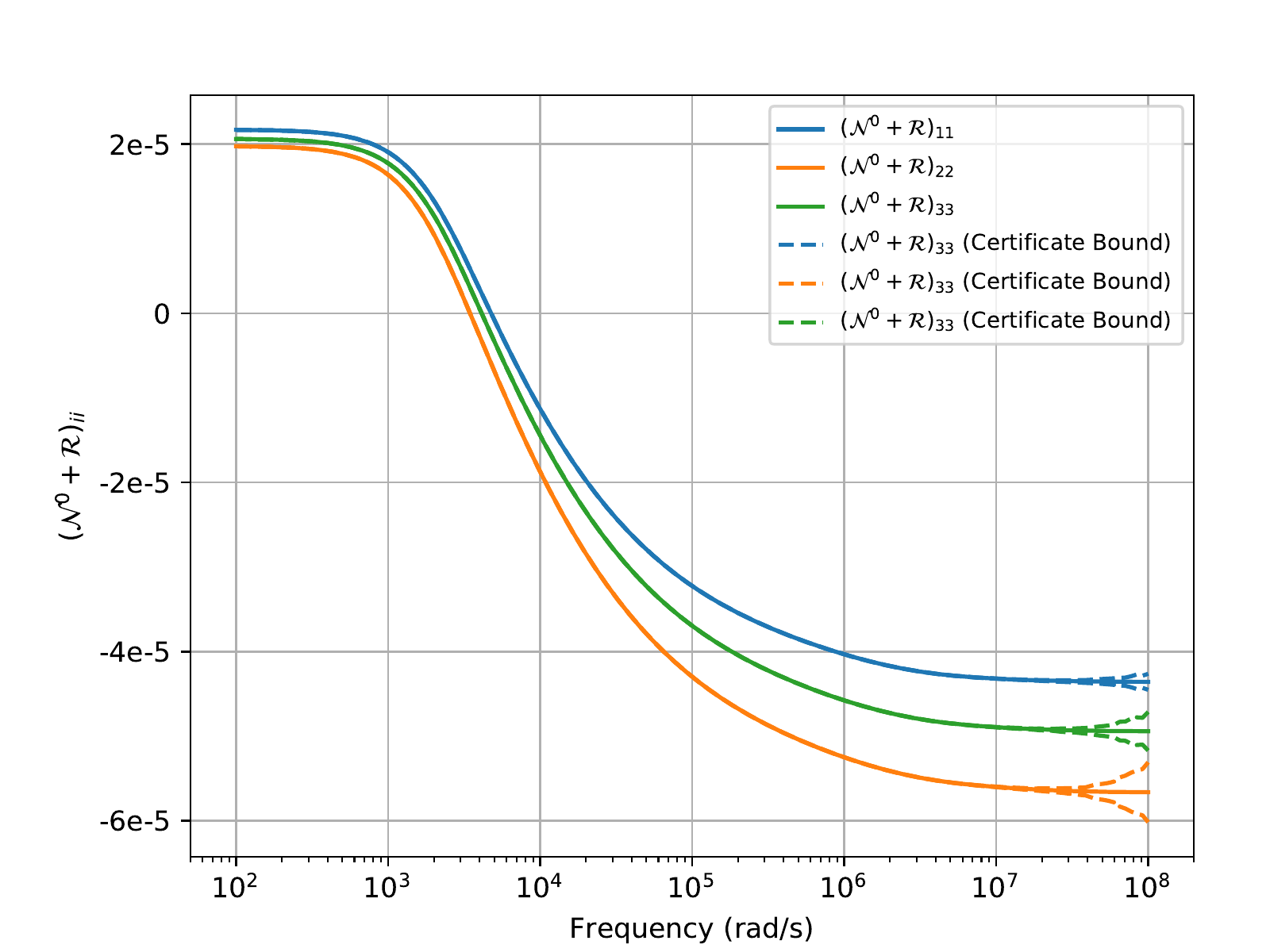} &
\includegraphics[width=0.5\textwidth, keepaspectratio]{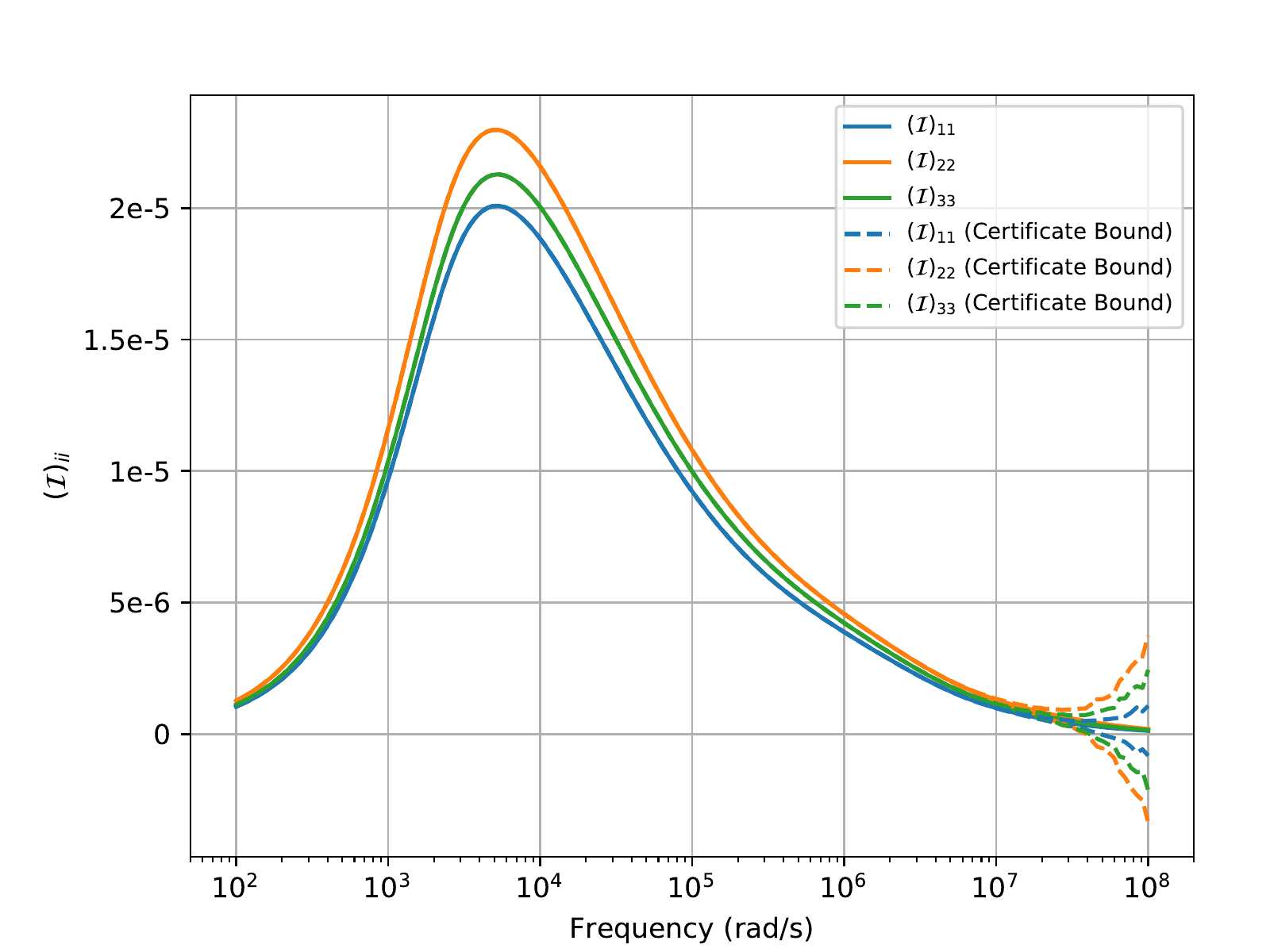} \\
\textrm{\footnotesize{(a) $(\mathcal{N}^0[\alpha B]+\mathcal{R}[\alpha B, \omega] )_{ii}$}} & \textrm{\footnotesize{(b) $(\mathcal{I}[\alpha B, \omega])_{ii} $}} \\
\includegraphics[width=0.5\textwidth, keepaspectratio]{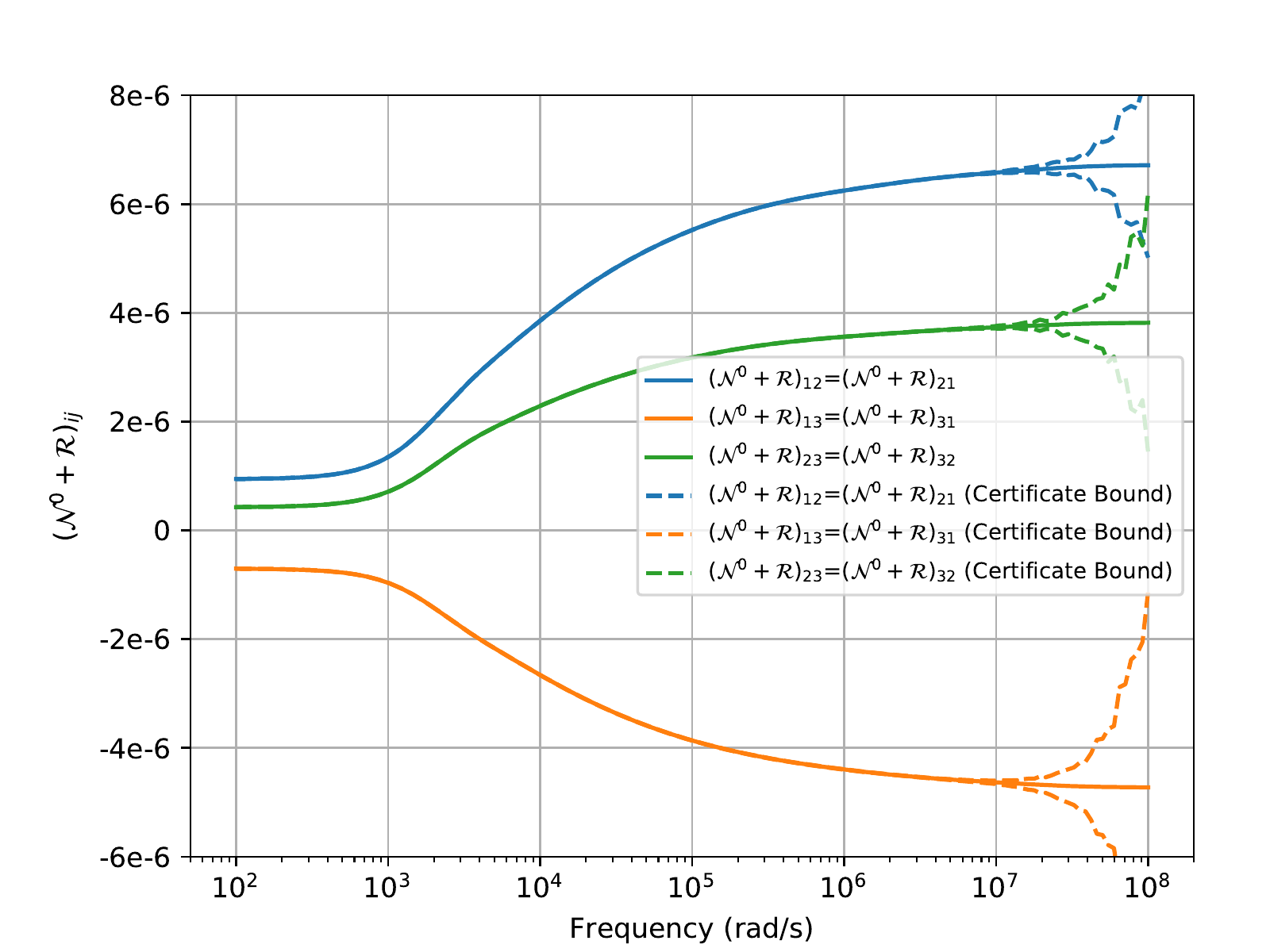} &
\includegraphics[width=0.5\textwidth, keepaspectratio]{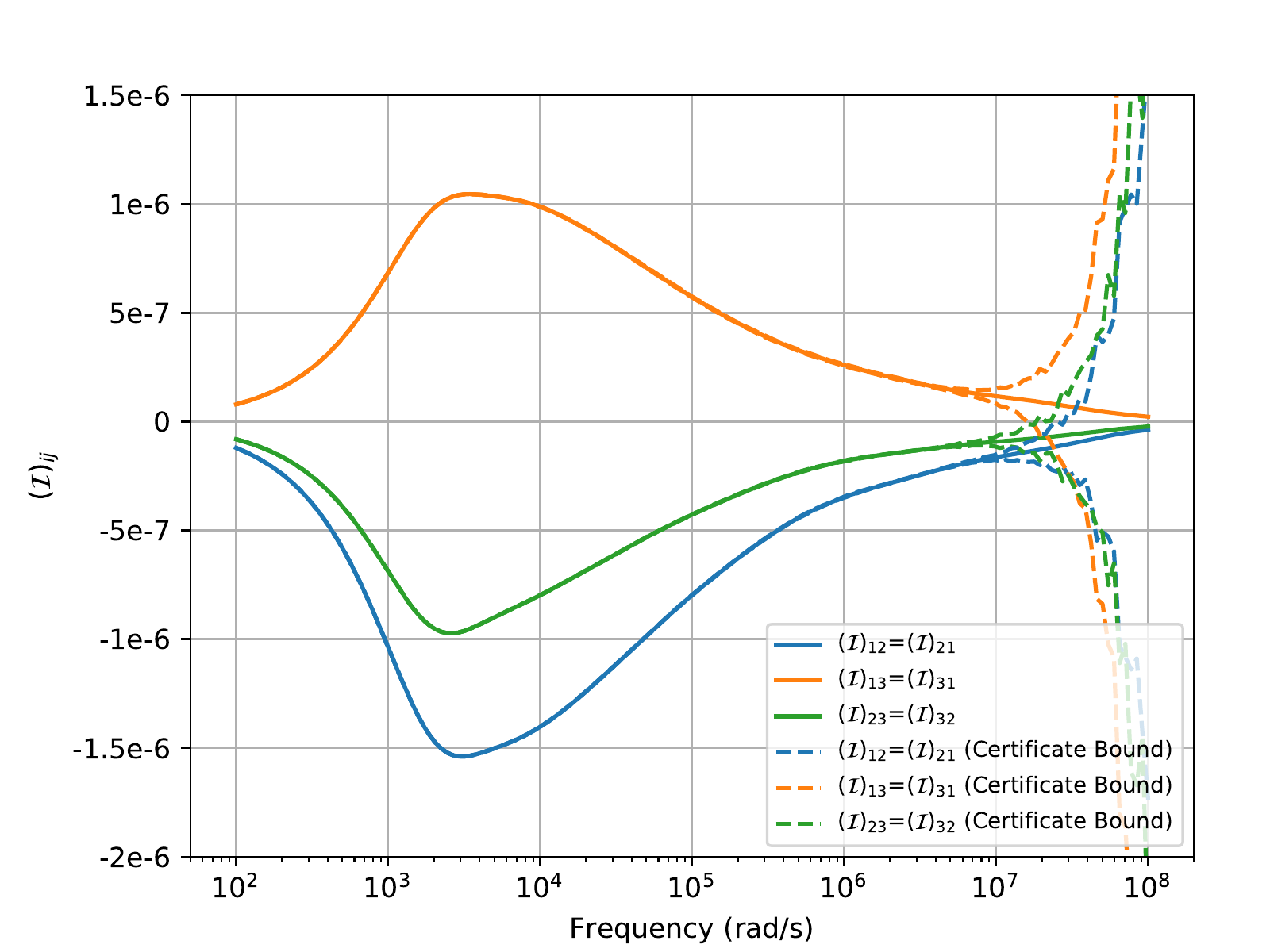} \\
\textrm{\footnotesize{(c) $ (\mathcal{N}^0[\alpha B]+\mathcal{R}[\alpha B, \omega] )_{ij}, i\ne j$}} & \textrm{\footnotesize{(d) $\mathcal{I}[\alpha B, \omega])_{ij}, i \ne j $}} 
\end{array}$$
\caption{
Irregular tetrahedron with $\mu_r=2$,  $\sigma_*=5.96\times10^6$ S/m, $\alpha=0.01$ m: 
 PODP applied to the computation of $\mathcal{M}[\alpha B, \omega]$ with $TOL=1\times 10^{-6}$ and $N=21$ showing the PODP solution and output certificates $(\cdot)\pm (\Delta [ \omega])_{ij}$ for  $(a)$ $ (\mathcal{N}^0[\alpha B]+\mathcal{R}[\alpha B, \omega])_{ij}$, with $i=j$
$(b)$ $ (\mathcal{I}[\alpha B, \omega])_{ij}$, with $i=j$, $(c)$ $ (\mathcal{N}^0[\alpha B]+\mathcal{R}[\alpha B, \omega])_{ij}$, with $i\ne j$,
$(d)$ $ (\mathcal{I}[\alpha B, \omega])_{ij}$ with $i\ne j$, each with $\omega$. 
}
\label{fig:ErrorTetra}
\end{figure}

\subsection{Inhomogeneous conducting bar}
As a final example  we consider $B_\alpha = \alpha B$ to the inhomogeneous conducting bar made up from two different conducting materials. The size, shape and materials of this object are the same as those presented in Section 6.1.3 of~\cite{LedgerLionheartamad2019}. This object has rotational and reflectional symmetries such that ${\mathcal M} [ \alpha B, \omega]$ has independent coefficients $({\mathcal M} [ \alpha B, \omega])_{11}$, $({\mathcal M} [ \alpha B, \omega])_{22}=({\mathcal M} [ \alpha B, \omega])_{33} $  and, thus, $\mathcal{N}^0[\alpha B]$, $\mathcal{R}[\alpha B, \omega]$, $\mathcal{I}[\alpha B, \omega]$ each have $2$ independent eigenvalues.
To compute the full order model, we  set $\Omega$ to be a sphere of radius 100 centred about the origin and discretise it with a mesh of 30209 unstructured tetrahedral elements, refined towards the object, and a polynomial order of $p=3$. This discretisation has already been found to produce an accurate representation of $\mathcal{M}[\alpha B, \omega] $ for the frequency range with  $\omega_{min} = 1 \times 10^2 {\textrm{ rad}}  / { \textrm{s}}$ and $\omega_{max} = 1 \times 10^8  {\textrm{ rad}} / {\textrm{s}}$.

The reduced order model is constructed using $N=13 $ snapshots  at logarithmically spaced frequencies with $TOL=1 \times 10^{-4} $. Figure~\ref{fig:Tetra} shows the results for $\lambda_i(\mathcal{N}^0[\alpha B]+\mathcal{R}[\alpha B, \omega])$ and $\lambda_i(\mathcal{I}[\alpha B, \omega])$, each with $\omega$, for both the full order model and the PODP. The agreement is excellent in both cases.
\begin{figure}[H]
$$\begin{array}{cc}
\includegraphics[width=0.48\textwidth, keepaspectratio]{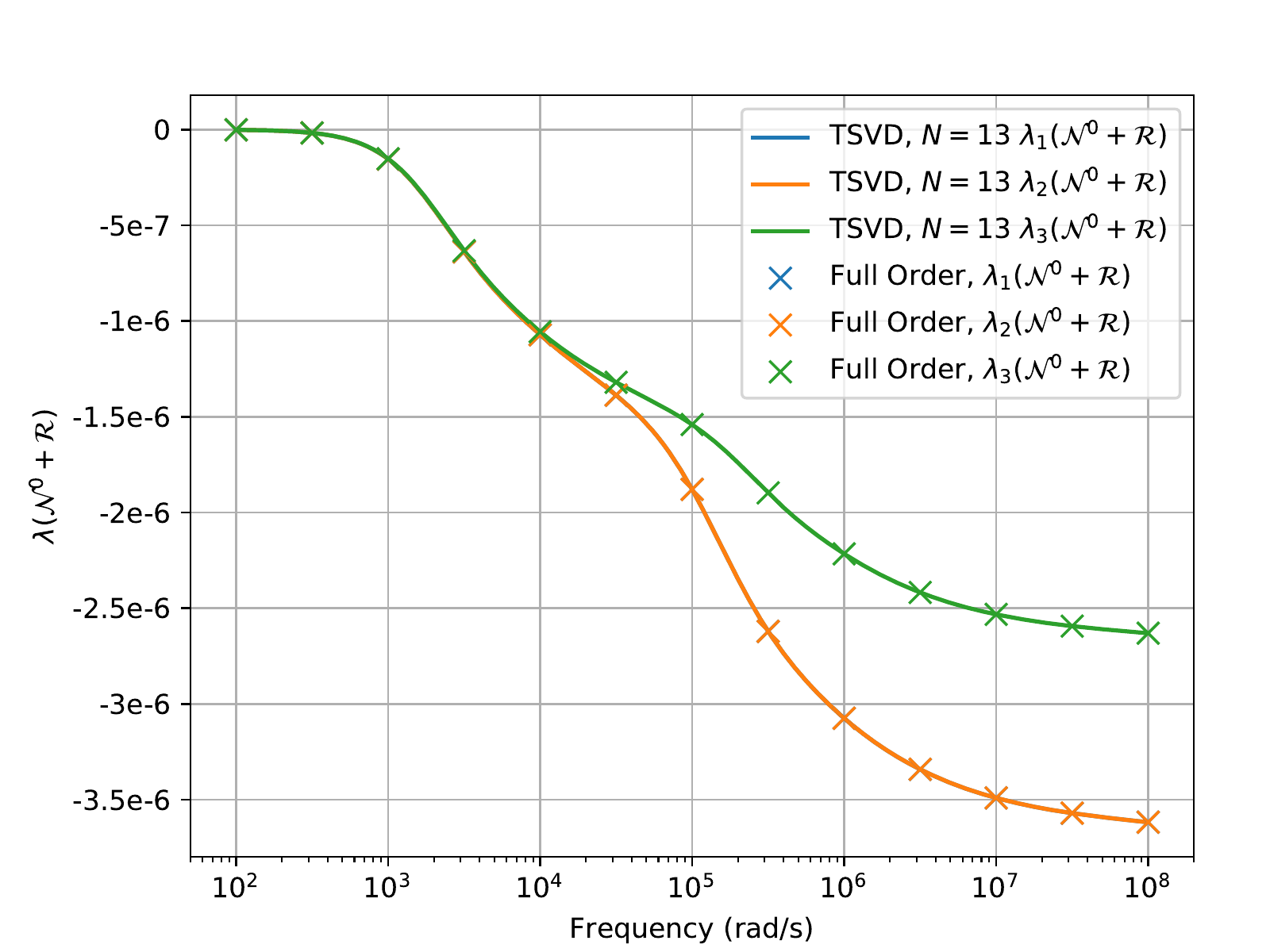} & \includegraphics[width=0.48\textwidth, keepaspectratio]{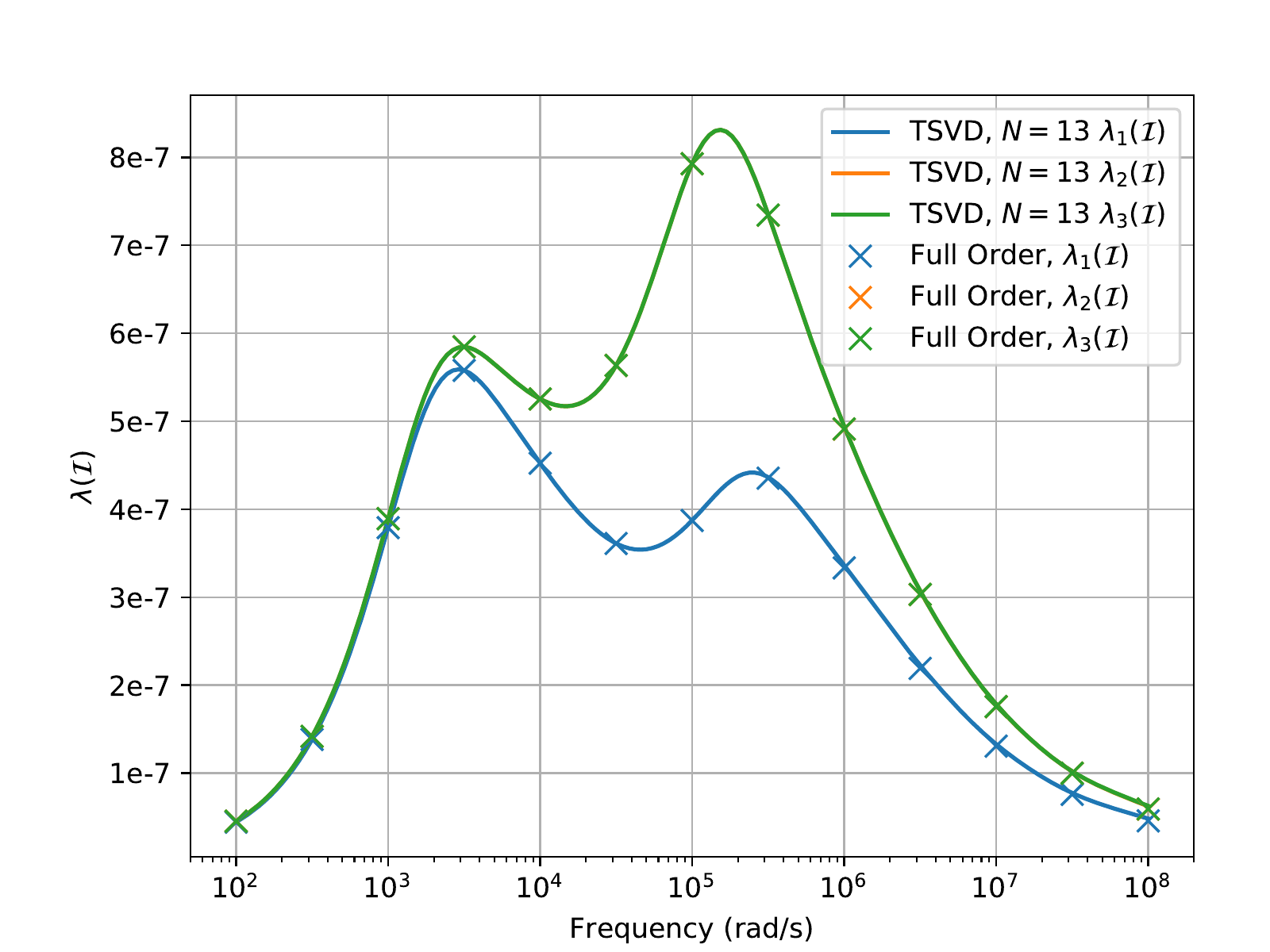}\\
\textrm{\footnotesize{(a) $\lambda_i(\mathcal{N}^0[\alpha B]+\mathcal{R}[\alpha B, \omega] )$}} & \textrm{\footnotesize{(b) $\lambda_i(\mathcal{I}[\alpha B, \omega] $)}}
\end{array}$$
\caption{Inhomogeneous bar with two distinct conductivities  (see Section 6.1.3 of~\cite{LedgerLionheartamad2019}): 
PODP applied to the computation of $\mathcal{M}[\alpha B, \omega]$ $N=13$ and  $TOL=1 \times 10^{-4} $
 showing $(a)$ $\lambda_i(\mathcal{N}^0[\alpha B]+\mathcal{R}[\alpha B, \omega])$   and $(b)$ $\lambda_i(\mathcal{I}[\alpha B, \omega])$, each with $\omega$.
 }
\label{fig:Bar}
\end{figure}

In Figure~\ref{fig:ErrorBar} we show the output certificates $(\mathcal{R}^{PODP}[\alpha B, \omega]+{\mathcal N}^{0, PODP}[\alpha B ])_{ii}\pm (\Delta[\omega])_{ii}$ (no summation over repeated indices implied) and $(\mathcal{I}^{PODP}[\alpha B, \omega])_{ii}\pm (\Delta[\omega])_{ii}$, both with $\omega$,
obtained by applying the technique described in Section~\ref{sect:outputcert} for the case where $N=23$ and $TOL=1\times10^{-6}$.  Note that we increased the number of snapshots from $N=13$ to $N=23$ and have reduced the tolerance to ensure tight certificates bounds, except at large frequencies. 

\begin{figure}[H]
$$\begin{array}{cc}
\includegraphics[width=0.5\textwidth, keepaspectratio]{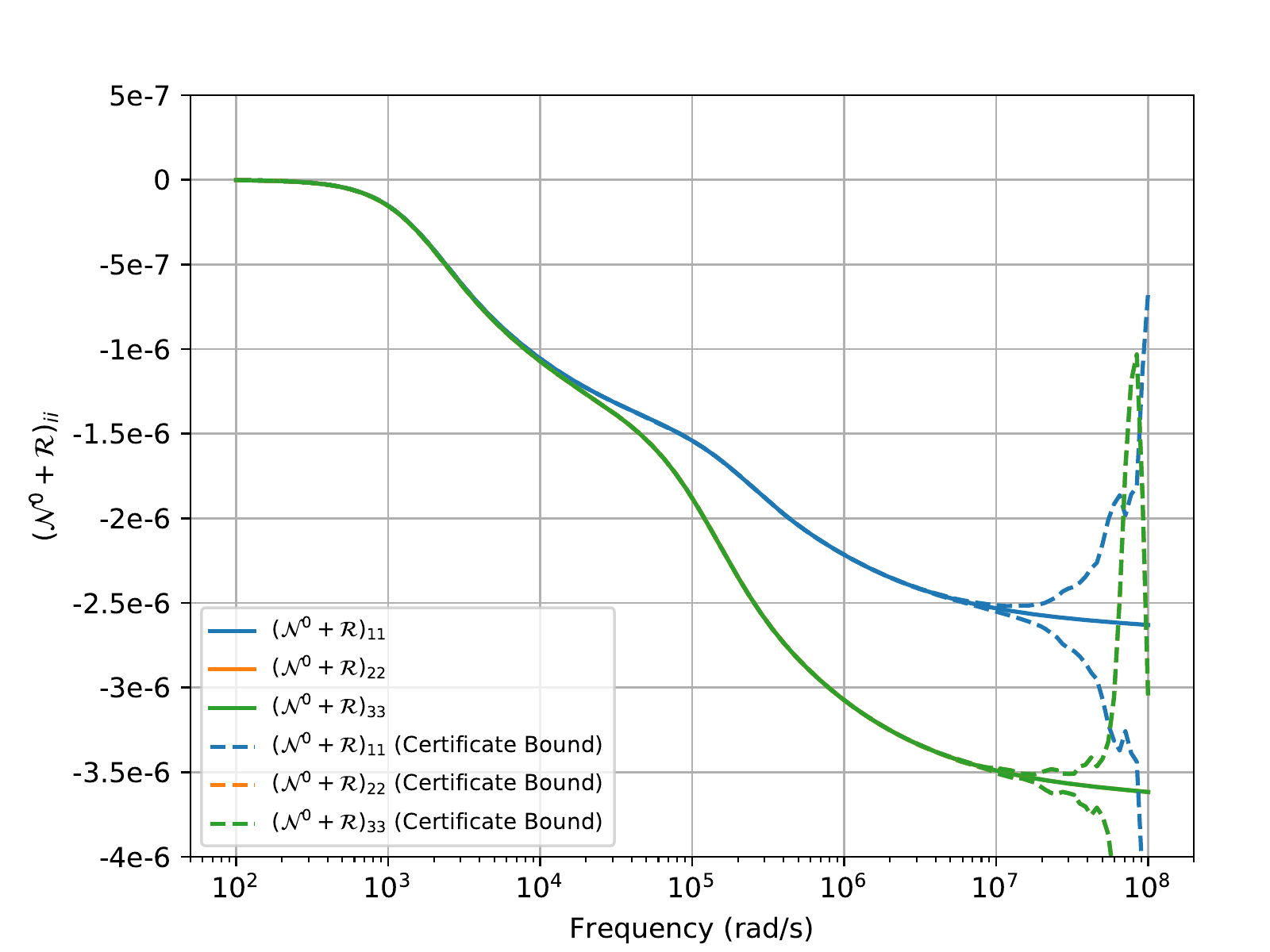} &
\includegraphics[width=0.5\textwidth, keepaspectratio]{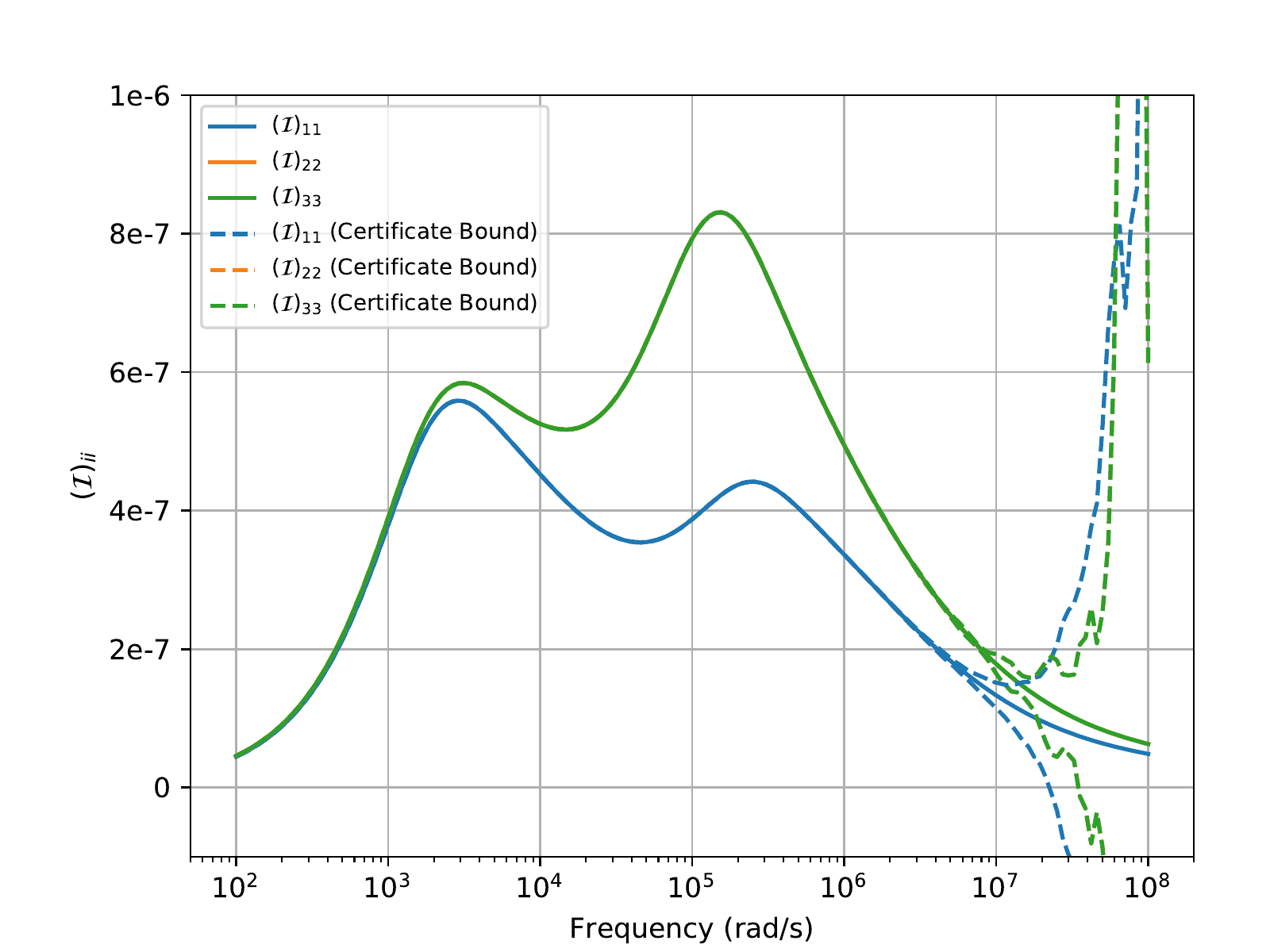} \\
\textrm{\footnotesize{(a) $(\mathcal{N}^0[\alpha B]+\mathcal{R}[\alpha B, \omega] )_{ii}$}} & \textrm{\footnotesize{(b) $(\mathcal{I}[\alpha B, \omega])_{ii} $}} 
\end{array}$$
\caption{
Inhomogeneous bar with two distinct conductivities  (see Section 6.1.3 of~\cite{LedgerLionheartamad2019}): 
PODP applied to the computation of $\mathcal{M}[\alpha B, \omega]$ with $N=23$ showing the PODP solution and output certificates $(\cdot ) \pm (\Delta [\omega])_{ii}$ for  $(a)$ $ (\mathcal{N}^0[\alpha B]+\mathcal{R}[\alpha B, \omega])_{ii}$,
$(b)$ $ (\mathcal{I}[\alpha B, \omega])_{ii}$,
each with $\omega$. 
}
\label{fig:ErrorBar}
\end{figure}

\section{Numerical examples of scaling}\label{sect:examplesscale}

In this section we illustrate the application of the results presented in Section~\ref{sect:scaling}.

\subsection{Scaling of conductivity}
As an illustration of Lemma~\ref{lemma:condscale}, we consider a conducting permeable sphere $B_\alpha=\alpha B $ where  $\alpha=0.01$~m with materials properties  $\mu_r=1.5$ and $\sigma_{*}^{(1)}=1 \times 10^7$ S/m and a second object, which is the same as the first except that $\sigma_*^{(2)} = s \sigma_*^{(1)} = 10 \sigma_*^{(1)}$. In Figure~\ref{fig:SphereSigma}, we compare the full order computations of ${\mathcal M} [ \alpha B , \omega, \mu_r,\sigma_*^{(1)}]$ and ${\mathcal M} [ \alpha B , \omega, \mu_r,\sigma_*^{(2)}]$ with that obtained from (\ref{eqn:condscale}). We observe that the translation predicted by (\ref{eqn:condscale}) is in excellent agreement with the full order model solution for  ${\mathcal M} [ \alpha B, \omega, \mu_r,\sigma_*^{(2)}] $.
\begin{figure}[H]
$$\begin{array}{cc}
\includegraphics[width=0.5\textwidth, keepaspectratio]{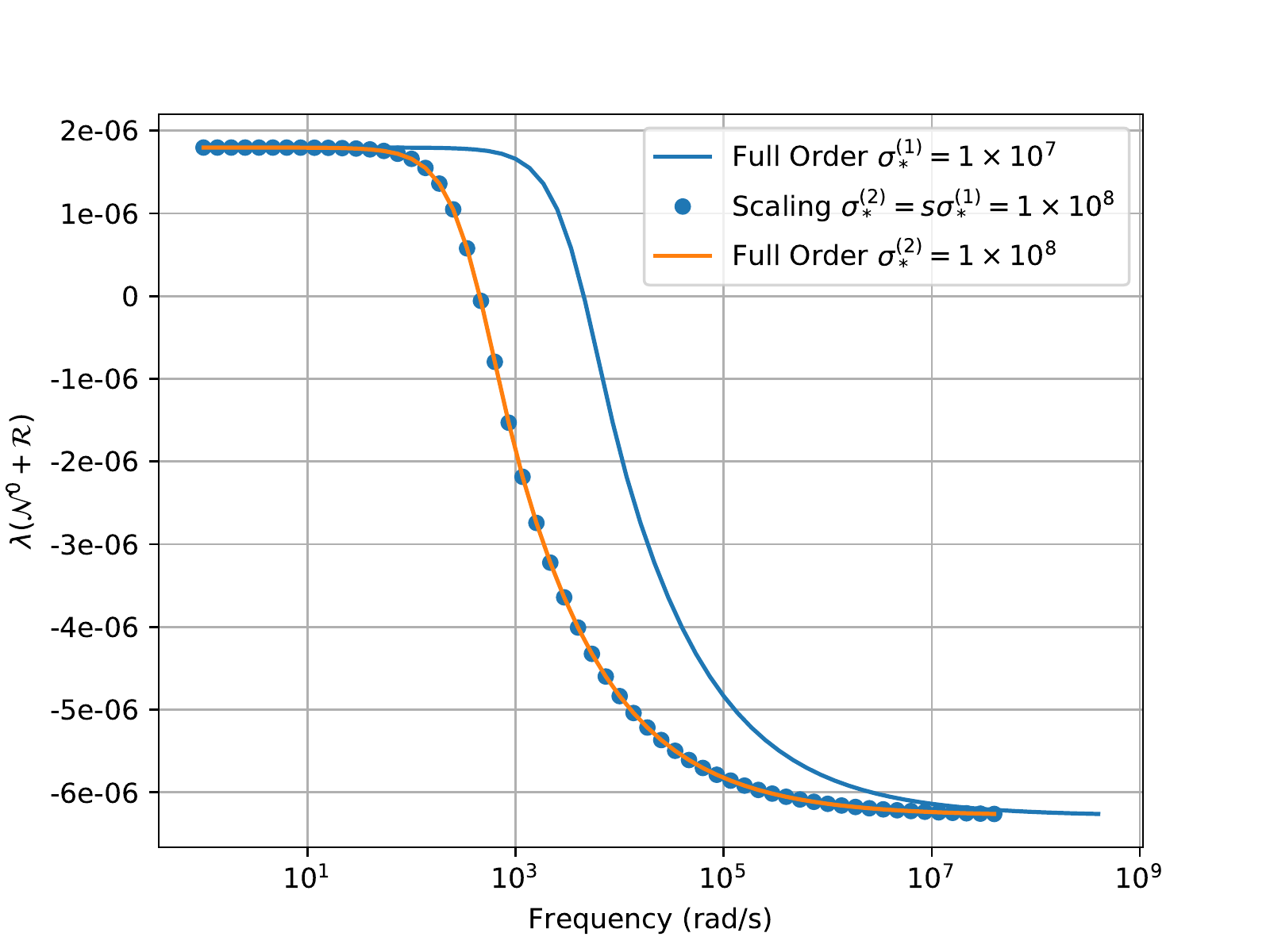} & \includegraphics[width=0.5\textwidth, keepaspectratio]{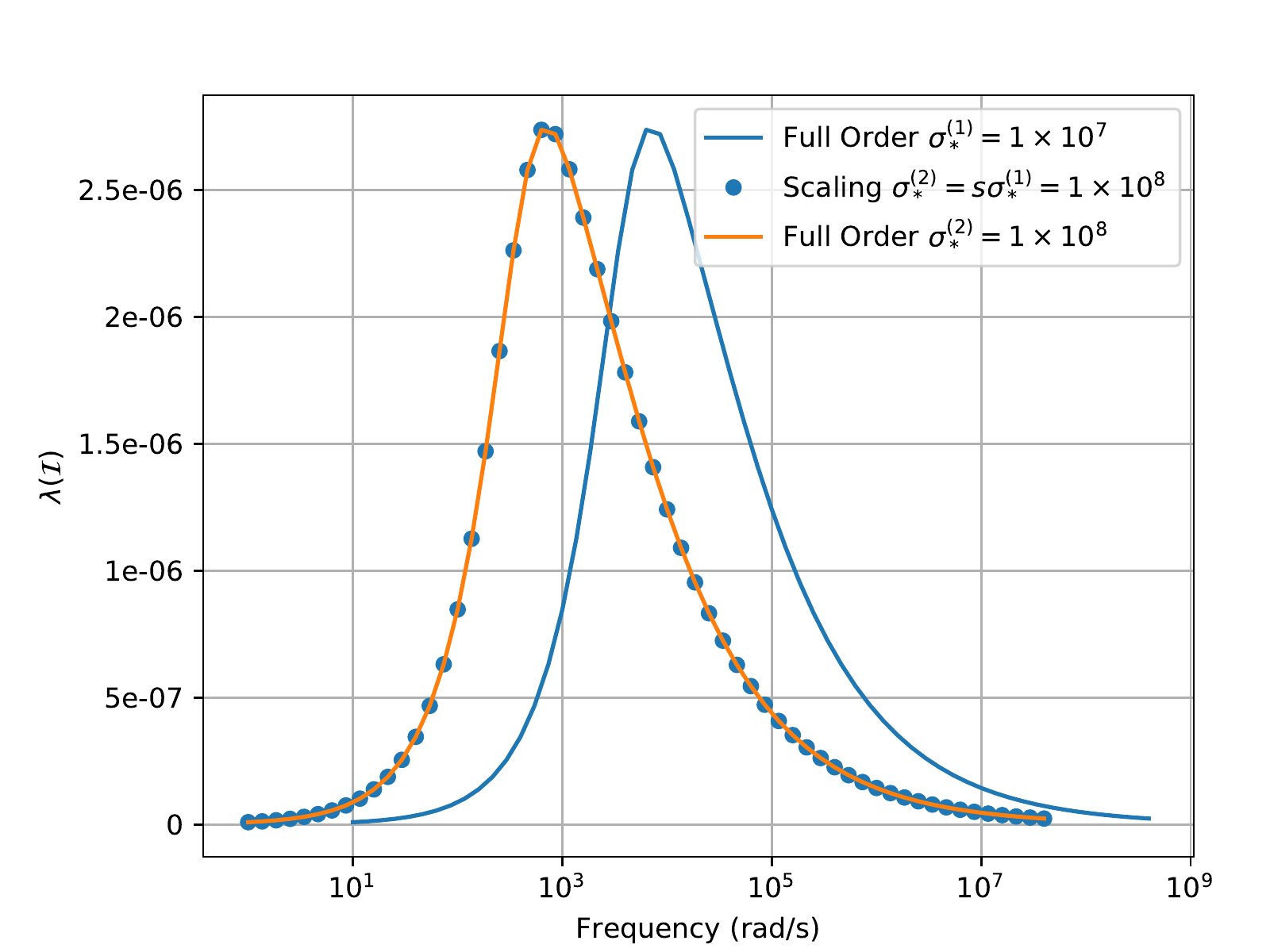}\\
\textrm{\footnotesize{(a) $\lambda_i(\mathcal{N}^0[\alpha B,\mu_r]+\mathcal{R}[\alpha B, \omega,\mu_r,\sigma_*])$}} & \textrm{\footnotesize{(b) $\lambda_i(\mathcal{I}[\alpha B, \omega,\mu_r,\sigma_*])$}}
\end{array}$$
\caption{Sphere with $\mu_r=1.5$, $\sigma_*^{(1)}=1\times10^7$ S/m , $\alpha=0.01$ m and second sphere, which is the same as the first except that $\sigma_*^{(2)} = s \sigma_*^{(1)} = 10 \sigma_*^{(1)}$:
showing the translation predicted by (\ref{eqn:condscale})  compared with  the full order model solutions for $(a)$ $\lambda_i(\mathcal{N}^0[\alpha B,\mu_r]+\mathcal{R}[\alpha B, \omega,\mu_r,\sigma_*])$ and $ (b)$ $\lambda_i(\mathcal{I}[\alpha B, \omega,\mu_r,\sigma_*])$.}
\label{fig:SphereSigma}
\end{figure}
\subsection{Scaling of object size}
To illustrate Lemma~\ref{lemma:alphascale},  we consider a conducting permeable tetrahedron $B_\alpha^{(1)}=\alpha^{(1)} B=0.01B $ with vertices as described in Section~\ref{sect:tetra} and   material properties $\mu_r=1.5$ and $\sigma_*=1 \times 10^6$ S/m. Then, we consider a second object $B_\alpha^{(2)} = \alpha^{(2)}B = s\alpha^{(1)} B=0.015B$, which, apart from its size, is otherwise the same as $B_\alpha^{(1)}$.
 In Figure~\ref{fig:TetraAlpha}, we compare the full order computations of ${\mathcal M} [ \alpha^{(1)} B, \omega, \mu_r,\sigma_*]$ and ${\mathcal M} [  \alpha^{(2)} B, \omega, \mu_r,\sigma_*]$ with that obtained from (\ref{eqn:alphascale}). We observe that the translation and scaling predicted by (\ref{eqn:alphascale}) is in excellent agreement with the full order model solution for  ${\mathcal M} [ \alpha^{(2)} B, \omega, \mu_r,\sigma_*]$.

\begin{figure}[H]
$$\begin{array}{cc}
\includegraphics[width=0.5\textwidth, keepaspectratio]{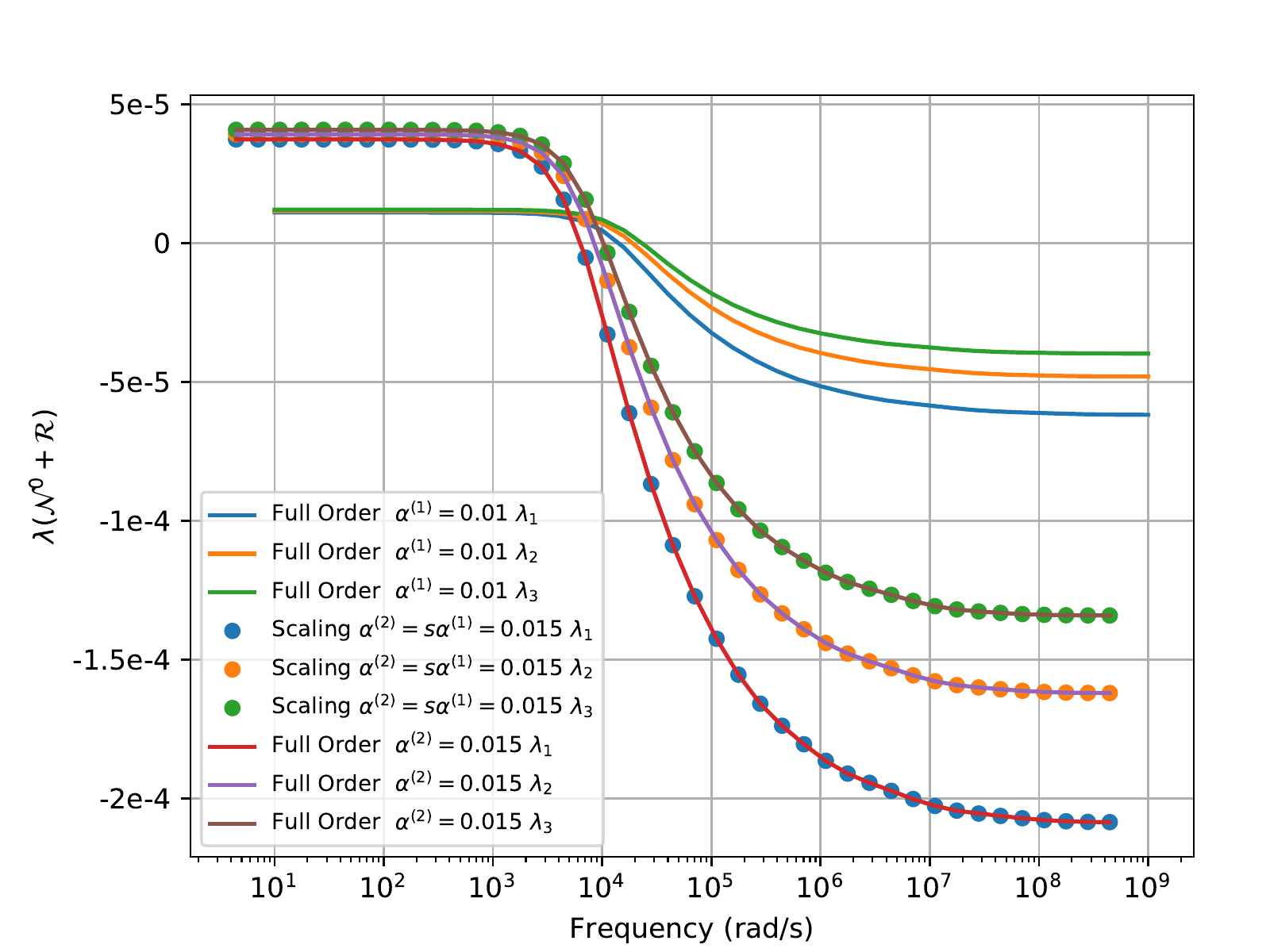} & \includegraphics[width=0.5\textwidth, keepaspectratio]{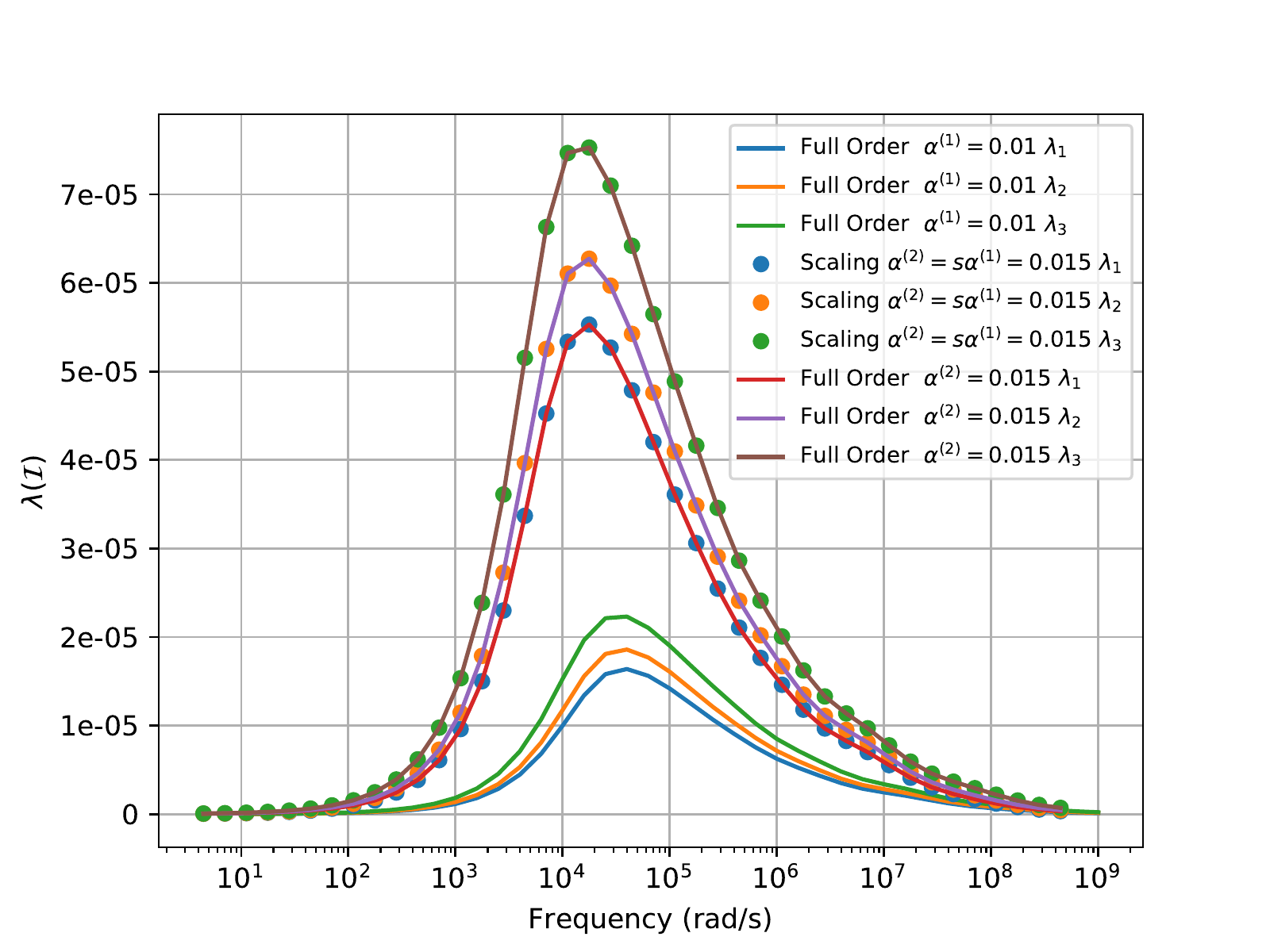}\\
\textrm{\footnotesize{(a) $\lambda_i(\mathcal{N}^0[\alpha B,\mu_r]+\mathcal{R}[\alpha B, \omega,\mu_r,\sigma_*])$}} & \textrm{\footnotesize{(b) $\lambda_i(\mathcal{I}[\alpha B, \omega,\mu_r,\sigma_*])$}}
\end{array}$$
\caption{Tetrahedron $B_\alpha^{(1)}=\alpha^{(1)} B=0.01B $  with $\mu_r=1.5$ and $\sigma_*=1 \times 10^6$~S/m, $\alpha=0.01$ m  and a second tetrahedron, which is the same as the first except  that $B_\alpha^{(2)} = \alpha^{(2)}B = s\alpha^{(1)} B=0.015B$: showing the translation and scaling predicted by (\ref{eqn:alphascale})  compared with  the full order model solutions for $(a)$ $\lambda_i(\mathcal{N}^0[\alpha B,\mu_r]+\mathcal{R}[\alpha B, \omega,\mu_r,\sigma_*])$ and $ (b)$ $\lambda_i(\mathcal{I}[\alpha B, \omega,\mu_r,\sigma_*])$.}
\label{fig:TetraAlpha}
\end{figure}

\section{Conclusions}

An application of a ROM using PODP for the efficient computation of the spectral signature of the MPT has been studied in this paper. The full order model has been approximated by ${\bm H}(\hbox{curl})$ conforming discretisation using the \texttt{NGSolve} finite element package. The offline stage of the ROM involves computing a small number of snapshots of the full order model at logarithmically spaced frequencies, then in the online stage, the spectral signature of the MPT is rapidly and accurately predicted to arbitrarily fine fidelity using PODP. Output certificates have been derived and can be computed in the online stage at negligible computational cost and ensure accuracy
of the ROM prediction. If desired, these output certificates could be used to drive an adaptive procedure for choosing new snapshots, in a similar manner to the approach presented in~\cite{hesthaven2016}. However, by choosing the frequency snapshots logarithmically, accurate spectral signatures of the MPT were already obtained with tight certificate bounds.
In addition, simple scaling results, which enable the MPT spectral signature to be easily computed from an existing set of coefficients under the scaling of an object's conductivity or object size, have been derived.  A series of numerical examples have been presented to demonstrate the accuracy and efficiency of our approach for homogeneous and inhomogeneous conducting permeable objects. Future work involves applying the presented approach to generate a dictionary of MPT spectral signatures for different objects for the purpose of metallic object identification using a classifier.

\section*{Acknowledgements}
B.A. Wilson gratefully acknowledges the financial support received from EPSRC in the form of a DTP studentship with project reference number 2129099. P.D. Ledger gratefully acknowledges the financial support received from EPSRC in the form of grant EP/R002134/1. The authors are grateful to Professors W.R.B. Lionheart and A.J. Peyton from The University of Manchester and Professor T. Betcke from University College London for research discussions at project meetings and to the group of Professor J. Sch\"oberl from the Technical University of Vienna for their technical support  on \texttt{NGSolve}.  {\bf EPSRC Data Statement:} All data is provided in Section~\ref{sect:examplespodp}. This paper does not have any conflicts of interest.

\bibliographystyle{plain}
\bibliography{paperbib}

\end{document}